\documentclass[aap]{imsart}

\RequirePackage{amsthm,amsmath,amsfonts,amssymb,mathtools}
\RequirePackage[authoryear]{natbib}
\RequirePackage[colorlinks,pagebackref,citecolor=blue,urlcolor=blue]{hyperref}
\RequirePackage{graphicx}
\usepackage{enumerate}

\startlocaldefs
\numberwithin{equation}{section}
\theoremstyle{plain}
\newtheorem{theorem}{Theorem}[section]
\newtheorem{assumption}[theorem]{Assumption}
\newtheorem{lemma}[theorem]{Lemma}

\newtheorem{proposition}[theorem]{Proposition}
\newtheorem{corollary}[theorem]{Corollary}
\newtheorem{remark}[theorem]{Remark}
\newcommand{\brac}[1]{\langle#1\rangle}\newcommand{\abs}[1]{\lvert#1\rvert}\newcommand{\Abs}[1]{\left|#1\right|}\newcommand{\ABs}[1]{\biggl|#1\biggr|}\newcommand{\Brace}[1]{\left\{#1\right\}}\newcommand{\paren}[1]{\left(#1\right)}\newcommand{\Paren}[1]{\biggr(#1\biggl)}\newcommand{\Square}[1]{\left[#1\right]}\newcommand{\SQuare}[1]{\biggl[#1\biggr]}\newcommand{\ocinterval}[1]{(#1]}\newcommand{\cointerval}[1]{[#1)}
\newcommand{\ov}[1]{\overline{#1}}\newcommand{\wt}[1]{\widetilde{#1}}\newcommand{\wh}[1]{\widehat{#1}}\newcommand{\deq}{\overset{\mathrm{d}}{=}}
\usepackage{etoolbox}
\renewcommand{\do}[1]{\csdef{b#1}{\mathbb{#1}}}\docsvlist{A,B,C,D,E,F,G,H,I,J,K,L,M,N,O,P,Q,R,S,T,U,V,W,X,Y,Z}
\renewcommand{\do}[1]{\csdef{c#1}{\mathcal{#1}}}\docsvlist{A,B,C,D,E,F,G,H,I,J,K,L,M,N,O,P,Q,R,S,T,U,V,W,X,Y,Z}
\renewcommand{\do}[1]{\csdef{f#1}{\mathfrak{#1}}}\docsvlist{A,B,C,D,E,F,G,H,I,J,K,L,M,N,O,P,Q,R,S,T,U,V,W,X,Y,Z}

\renewcommand{\d}{\mathrm{d}}\newcommand{\ds}{\mathrm{d}s}\newcommand{\dt}{\mathrm{d}t}\newcommand{\du}{\mathrm{d}u}\newcommand{\dx}{\mathrm{d}x}
\newcommand{\R}{\mathbb{R}}

\newcommand{\tpar}{{\!\scalebox{0.6}{\rotatebox{30}{$\parallel$}}}}
\newcommand{\RelaxCommands}[1]{\renewcommand*{\do}[1]{\expandafter\let\csname ##1\endcsname\relax}\docsvlist{#1}}
\RelaxCommands{P}
\newcommand{\DeclareMathOperators}[1]{\renewcommand*{\do}[1]{\expandafter\DeclareMathOperator\csname ##1\endcsname{##1}}\docsvlist{#1}}
\DeclareMathOperators{P,E,V,Var,C,Cov,id,erf,erfc,erfcx,Unif,ESS,MSE,TV}
\endlocaldefs

\begin{document}

\begin{frontmatter}
\title{Diffusive Scaling Limits of Forward Event-Chain Monte Carlo: Provably Efficient Exploration with Partial Refreshment}
\runtitle{Diffusive Scaling Limits of FECMC}

\begin{aug}
\author[A]{\fnms{Hirofumi}~\snm{Shiba}\ead[label=e1]{shiba.hirofumi@ism.ac.jp}\orcid{0009-0007-8251-1224}}
\and
\author[A]{\fnms{Kengo}~\snm{Kamatani}\ead[label=e2]{kamatani@ism.ac.jp}\orcid{0000-0002-1537-3446}}
\address[A]{Institute of Statistical Mathematics\printead[presep={,\ }]{e1,e2}}

\end{aug}

\begin{abstract}
    Piecewise deterministic Markov process samplers are attractive alternatives to Metropolis--Hastings algorithms. A central design question is how to incorporate partial velocity refreshment to ensure ergodicity without injecting excessive noise. Forward Event-Chain Monte Carlo (FECMC) is a generalization of the Bouncy Particle Sampler (BPS) that addresses this issue through a stochastic reflection mechanism, thereby reducing reliance on global refreshment moves. Despite promising empirical performance, its theoretical efficiency remains largely unexplored.

    We develop a high-dimensional scaling analysis for standard Gaussian targets and prove that the negative log-density (or potential) process of FECMC converges to an Ornstein--Uhlenbeck diffusion, under the same scaling as BPS. We derive closed-form expressions for the limiting diffusion coefficients of both methods by analyzing their associated radial momentum processes and solving the corresponding Poisson equations. These expressions yield a sharp efficiency comparison: the diffusion coefficient of FECMC is strictly larger than that of optimally tuned BPS, and the optimum for FECMC is attained at zero global refreshment. Specifically, they imply an approximately eightfold increase in effective sample size per event over optimal BPS. Numerical experiments confirm the predicted diffusion coefficients and show that the resulting efficiency gains remain substantial for a range of non-Gaussian targets. Finally, as an application of these results, we propose an asymptotic variance estimator for Piecewise deterministic Markov processes that becomes increasingly efficient in high dimensions by extracting information from the velocity variable.
\end{abstract}

\begin{keyword}[class=MSC]
\kwd[Primary ]{60F17}
\kwd[; secondary ]{65C05}
\kwd{65C40} 
\kwd{60J25} 
\end{keyword}

\begin{keyword}
\kwd{Piecewise deterministic Markov processes}
\kwd{forward event-chain Monte Carlo}
\kwd{bouncy particle sampler}
\kwd{high-dimensional scaling limits}
\kwd{output analysis}
\end{keyword}

\end{frontmatter}


\section{Introduction}

Since the introduction of the random walk algorithm by \cite{Metropolis+1953}, Markov chain Monte Carlo (MCMC) methods have found widespread applications in physics and statistics, particularly in Bayesian statistics following the advocacy of \cite{Gelfand-Smith1990}. While the Metropolis--Hastings (MH) paradigm introduced by \cite{Hastings1970} has underpinned the majority of applications, a fundamentally different class of algorithms based on piecewise deterministic Markov processes (PDMPs) \citep{Davis1984} has emerged as a viable alternative. These methods originated in computational physics and molecular simulation as rejection-free, event-driven algorithms designed to overcome the diffusive behavior inherent in MH-type dynamics \citep{Bernard+2009,Peters-deWith2012,Michel+2014,Faulkner+2018,Krauth2021} and were later adopted in the statistical literature \citep{Vanetti+2018,Bouchard-Cote+2018,Bierkens+2019,Fearnhead+2018,Faulkner-Livingstone2024}. Algorithms designed within the PDMP framework enjoy two key structural advantages that are typically lost in MH-based methods. First, PDMPs are inherently irreversible, and this lack of detailed balance has been shown to accelerate convergence relative to reversible counterparts, a well-documented fact in both computational physics \citep{Nishikawa+2015,Lei+2019} and statistics \citep{Andrieu-Livingstone2021,Eberle-Lorler2024}.
Second, and crucially for modern large-scale inference, the PDMP framework naturally accommodates unbiased subsampling and stochastic gradients while preserving the exactness of the target distribution \citep{Bierkens+2019,Sen+2020,Fearnhead+2024}. This stands in contrast to most stochastic gradient MH methods, which typically introduce asymptotic bias in exchange for scalability \citep{Welling-Teh2011,Chen+2014,Nemeth-Fearnhead2021}. 
Other structural benefits of PDMP-based samplers, including robustness in multiscale targets and bespoke implementation for special targets, are also under active investigation \citep{Bierkens+2023,Chevallier+2023}.

Despite these advantages, a precise understanding of the high-dimensional scaling behavior of PDMP-based samplers remains incomplete. In particular, although several PDMP algorithms have been proposed, there is still no fully unified theoretical framework for comparing their efficiency or guiding practical algorithmic choices in high dimensions, despite the fact that scaling analysis is well established for MH-type methods; see Section~\ref{subsec-ScalingAnalysis}. In this work, we address this gap by deriving high-dimensional diffusive scaling limits, which enable a theoretical comparison of their computational complexity and asymptotic efficiency. We focus on two representative and widely used methods: the Bouncy Particle Sampler (BPS) \citep{Bouchard-Cote+2018} and the Forward Event-Chain Monte Carlo (FECMC) \citep{Michel+2020}. Both methods employ piecewise linear dynamics perturbed by directional changes at random event times. A key difference is how randomness is injected via their directional changes: BPS employs deterministic reflections and thus relies on global refreshment moves for randomization, whereas FECMC achieves it through a stochastic reflection mechanism. In BPS, there is a tension in selecting the Poisson rate $\rho$ for the global refreshment events. While global refreshment is essential for ergodicity, it may also inject additional noise that can hinder efficient exploration of the target distribution. Therefore, the primary design goal of FECMC was to discard the global refreshment while preserving ergodicity. This is achieved by stochastic reflection that combines (1) partial refreshment of the velocity component parallel to the gradient and (2) minimal randomization of the orthogonal component. The effectiveness of this strategy has been demonstrated through numerical experiments in \cite{Michel+2020}. However, a theoretical explanation for this observed improvement is lacking. In particular, it remains unclear whether additional global refreshment ($\rho>0$) would accelerate exploration.

We provide a theoretical explanation by directly comparing the high-dimensional diffusion scaling limits between FECMC and BPS. In Section~\ref{sec-ScalingAnalysis}, we establish the weak convergence theorems for the FECMC radial momentum process and the negative log-density (or potential) process. We start with the case $\rho=0$, which requires new techniques. In this case, the FECMC process loses the regeneration structure on which the martingale arguments of \cite{Bierkens+2022} crucially depend. By considering the dual predictable projection instead of the generator, we derive the limit theorem for the case of $\rho=0$ through a semimartingale technique \citep{Metivier1982,Jacod-Shiryaev2003}. We note that the case $\rho=0$ has not been considered since BPS fails to be ergodic under this choice. Once the case $\rho=0$ is understood, the extension to $\rho>0$ follows naturally by combining our approach with the techniques developed in \cite{Bierkens+2022} to analyze BPS. For the standard Gaussian targets, this limit turns out to be an Ornstein--Uhlenbeck (OU) process
\[\d Y_t=-\frac{\sigma^2(\rho)}{4}Y_t\,\dt+\sigma(\rho)\,\d B_t,\]
where only the diffusivity parameter $\sigma^2$ differs between FECMC and BPS. 
Therefore, a comparison of $\sigma$ will enable us to compare the limiting speeds of the two processes and ultimately the asymptotic efficiency of the two algorithms. However, the previous expression for $\sigma$ of BPS obtained in \cite{Bierkens+2022} involves an intractable integral, which hinders direct comparison. In \cite{Bierkens+2022}, the authors approximated this quantity numerically and proposed using it to tune the refreshment rate $\rho$ by maximizing diffusivity $\sigma^2$. In Section~\ref{sec-Diffusivity}, we derive explicit analytic formulae for the diffusion coefficients of both BPS and FECMC (Theorem~\ref{thm-formula-for-sigma}). We plot the result in Figure~\ref{fig-sigma}, which illustrates the dominance of FECMC over BPS. This is achieved by reinterpreting the intractable integral as an expectation involving the associated resolvent operator of the radial momentum process and explicitly solving the associated Poisson equations. Interestingly, our results suggest that increasing $\sigma^2$, and hence speeding up the limiting negative log-density process $Y$, can be achieved by improving the spectral properties of the radial momentum process. Consequently, we analytically establish that FECMC achieves a higher speed than even a perfectly tuned BPS in sufficiently high dimensions. Additionally, we conclude that FECMC is most efficient when the refreshment rate $\rho$ is set to zero, and any additional noise will only deteriorate performance. This confirms that the stochastic reflection mechanism of FECMC successfully preserves the necessary amount of randomization, thereby enabling efficient exploration while remaining free of hyperparameter $\rho$ and computationally cheaper. We support our analysis and its robustness through numerical experiments in Section~\ref{sec-Experiment}.

\begin{figure}[htbp]\centering
    \includegraphics[width=0.45\textwidth]{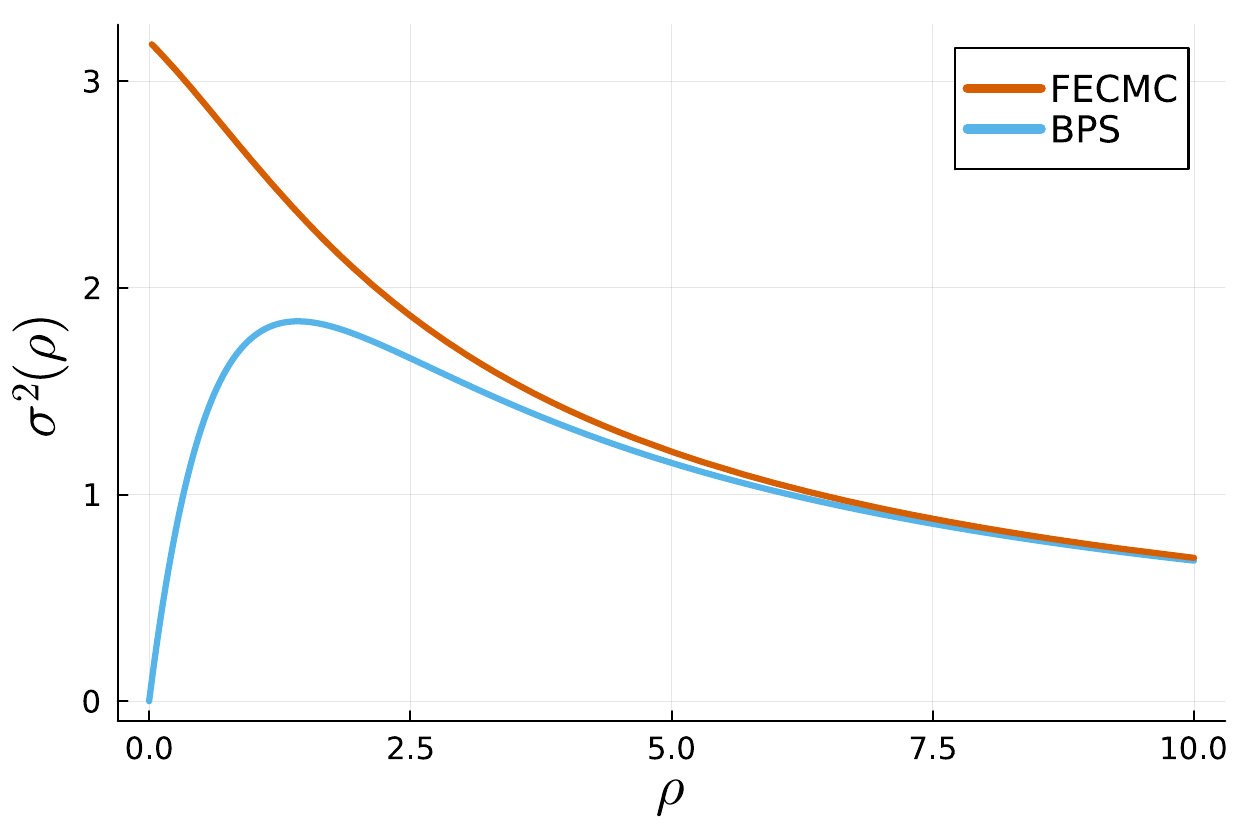}
    \caption{Diffusion coefficient $\sigma$ of the limiting negative log-density (potential) process vs. the global refreshment rate $\rho$ (FECMC vs. BPS). Standard Gaussian target with spherical velocity. Curves computed from the analytic expressions in Theorem~\ref{thm-formula-for-sigma}.
    }\label{fig-sigma}
\end{figure}

Finally, our analysis implies novel insights into PDMP output analysis. In practice, the velocity (or momentum) variable $v$ is typically treated as a purely auxiliary component and is thus discarded after simulation. However, we argue that $v$ may serve as a fast proxy for convergence diagnosis when the mixing of $x$ is too slow. Specifically, in our setting, the mixing time of the negative log-density $U(X_t)$ scales as $O(d)$, whereas its time derivative $(\nabla U(X_t)|V_t)$ mixes on an $O(1)$ timescale. This separation of timescales suggests that asymptotic variance estimation based on the time derivative can be more efficient in high dimensions than applying the same estimator directly to $U$. We formalize this observation as a corollary to our main results in Section~\ref{subsec-fast-proxy} and illustrate it empirically in Section~\ref{subsec-experiment-BM}, focusing on the commonly used batch means estimator.

\section{Background and Setting}
\subsection{Diffusion Scaling Limits for MCMC Algorithms}\label{subsec-ScalingAnalysis}

In Monte Carlo applications, the dimension $d$ of the state space can be very large, especially in modern data science and machine learning. A central question is thus how the efficiency of a given algorithm scales with $d$. In the statistics literature, the scaling analysis approach based on diffusion approximations was introduced in \cite{Roberts+1997} to analyze the high-dimensional behavior of MCMC samplers. The authors derived a weak convergence limit for finite dimensional projections of the random walk MH algorithm under appropriate time scaling. The limiting diffusion process captures effective first-order dynamics on a macroscopic scale. Since then, scaling analysis has been applied to algorithms beyond the random-walk MH algorithm \citep{Roberts+1997,Mattingly+2012}, such as the Metropolis-adjusted Langevin algorithm \citep{Roberts-Rosenthal1998,Pillai+2012} and Hamiltonian Monte Carlo \citep{Beskos+2013}.

While the diffusion limit theorems for MH-type algorithms are highly advanced \citep{Roberts-Rosenthal2001,Sherlock+2015,Roberts-Rosenthal2016,Zanella+2017,Bierkens-Roberts2017,Kamatani2018,Kuntz+2019,Kamatani2020,Yang+2020}, those for PDMP algorithms are still limited. For the high-dimensional asymptotic regime $d\to\infty$, there are only two works, to our knowledge, \cite{Deligiannidis+2021} and \cite{Bierkens+2022}, that directly consider PDMPs. 
These two studies identified different limits, which stem from different choices of the velocity distribution $\mu^d$.
In the work of \cite{Bierkens+2022}, $\mu^d$ is the uniform distribution on the unit sphere $S^{d-1}$ in $\R^d$, enabling a direct comparison between BPS and the Zig-Zag sampler (ZZS) \citep{Bierkens+2019}. With this choice $\mu^d=\Unif(S^{d-1})$, the coordinate processes of BPS have been proven to possess OU limits under an $O(d)$ time scaling. The authors also considered the negative log-density process. For both processes, they concluded an $O(d^2)$ computational complexity to generate approximately independent samples. On the other hand, in \cite{Deligiannidis+2021}, $\mu^d$ is set to the $d$-dimensional standard Gaussian distribution. Consequently, without time scaling, the finite dimensional coordinate processes of BPS have been proven to converge to the Randomized Hamiltonian Monte Carlo (RHMC) processes \citep{Bou-Rabee-Sanz-Serna2017}. In this way, they were able to theoretically validate the $O(d^{3/2})$ computational complexity for the one-dimensional marginal distribution observed in the numerical experiments of \cite{Bouchard-Cote+2018}. However, for the negative log-density process, the scaling differs and appears less favorable than in the case $\mu^d=\Unif(S^{d-1})$; see \cite{Deligiannidis+2021,Bierkens+2022}. In both cases, the negative log-density constitutes one of the slowest mixing quantities.

For other PDMP scaling regimes, the authors of \cite{Bierkens+2025} consider an anisotropic limit, where the covariance structure of a two-dimensional Gaussian distribution factorizes into two increasingly different length scales. In \cite{Agrawal+2025ZZ}, the authors consider the large sample limit $n\to\infty$ to compare different subsampling schemes of ZZS. In \cite{Agrawal+2025Fluid}, a central aim is to investigate the fluid limit as the initial starting point moves infinitely away from the mode to quantify how quickly different PDMP samplers return to the high-density region. In doing so, they consider a family of targets of the form $\pi^\epsilon(x)\propto e^{-U(x)/\epsilon}$ for $\epsilon>0$ and study the limit of $\epsilon\to0$. In their analysis, under appropriate assumptions, FECMC and the closely related Coordinate Sampler \citep{Wu-Robert2020} are proved to perform outstandingly better than the other methods, including BPS and ZZS, achieving $O(1)$ computational complexity in the limit.

\subsection{Setting and Notation}

In this work, we focus on the radial momentum and the negative log-density processes. Our choice is motivated by the observation that the mixing of these two quantities poses a more difficult yet statistically relevant challenge for PDMP samplers; see also \cite{Terenin-Thorngren2018,Sherlock-Thiery2021,Deligiannidis+2021}. Our setting is therefore more suitable for approaching the worst-case computational complexity of these algorithms in a way that is closely aligned with the $L^2$-convergence rate results in \cite{Andrieu+2021,Lu-Wang2022,Lu-Wang2022ZZ} and the total variation convergence results in \cite{Deligiannidis+2019,Durmus+2020,Vasdekis-Roberts2022,Roberts-Rosenthal2023}. We set the velocity distribution $\mu^d$ to be the uniform distribution on the unit sphere $S^{d-1}$ in $\R^d$, i.e., $\mu^d=\Unif(S^{d-1})$, in order to match the analysis of BPS in \cite{Bierkens+2022}. This choice is also made in the experiments of \cite{Michel+2020}, presumably to enable direct comparison with the original event-chain Monte Carlo algorithm of \cite{Bernard+2009,Michel+2014}.

Throughout our theoretical analysis in Sections~\ref{sec-ScalingAnalysis} and \ref{sec-Diffusivity}, the target distribution $\pi^d$ is assumed to be the $d$-dimensional standard Gaussian distribution
\begin{equation}\label{eq-pi-d}
    \pi^d(x)\coloneq \frac{1}{(2\pi)^{d/2}}\exp\paren{-U^d(x)},\qquad U^d(x)\coloneq \frac{\abs{x}^2}{2}=\frac{x_1^2+\cdots+x_d^2}{2},
\end{equation}
where the negative log-density function $U^d(x)$ will also be called the \textit{potential} function hereafter. Observe that $U^d$ is a function of the radial distance $\abs{x}$, and therefore $\pi^d$ is a spherically symmetric distribution. For such targets, the potential process captures mixing in the radial direction. Our proof framework applies to more general product-form distributions, and we expect the OU limits in Theorems~\ref{thm-Y} and \ref{thm-Y-positive-rho} to remain valid under appropriate integrability conditions. We do not pursue sharp sufficient conditions here. A systematic treatment of such conditions, as well as extensions to dependent targets, will be an interesting direction for future work, as the numerical experiments in Section~\ref{subsec-experiment-gamma} suggest that this diffusive limit may persist even under weak dependence. Instead, by taking advantage of the Gaussianity, we obtain closed-form expressions for $\sigma$ in Theorem~\ref{thm-formula-for-sigma}, which lead to a rigorous asymptotic efficiency ordering between FECMC and BPS. For general targets, the diffusion coefficient $\sigma$ typically needs to be evaluated numerically; see Section~\ref{sec-Experiment} for empirical estimates.

\subsection{Piecewise Deterministic Monte Carlo}

PDMPs form a class of continuous-time Markov processes that complement diffusion processes \citep{Davis1984,Davis1993}.
Most PDMPs that appear in modern Monte Carlo methodology have linear dynamics between random events, and their extended generators take the form
\begin{equation}\label{eq-PDMP-generator}
    Lf(z)=(v|\nabla_xf(z))+\lambda(z)\int_{\R^{d}}\Paren{f(x,v')-f(x,v)}Q(z,\d v'),\qquad z=(x,v)\in\R^d\times\R^d,
\end{equation}
for test functions $f$ in an appropriate domain $\cD(L)$. Here, $\nabla_xf=(\partial_{x_1}f,\cdots,\partial_{x_d}f)^\top$ denotes the gradient with respect to $x$, $\lambda:\R^{2d}\to\R_+\coloneq \cointerval{0,\infty}$ is a continuous function called the \textit{rate} (or intensity) function, and $Q:\R^{2d}\times\cB(\R^{d})\to[0,1]$ is a Markov kernel describing the jumps. 
$(\cdot|\cdot)$ denotes the inner product throughout.

To turn the PDMP \eqref{eq-PDMP-generator} into a Monte Carlo sampler targeting a probability density $\pi$ defined on $\R^d$, a velocity distribution $\mu$ on $\R^d$ must be specified first. Then, typically, the PDMP with the product target $\pi\otimes\mu$ is simulated. There are many choices for $\lambda$ and $Q$ to ensure convergence to $\pi\otimes\mu$. A common choice for the rate function is
\begin{equation}\label{eq-lambda}
    \lambda(x,v)=(v|\nabla U(x))_++\rho,\qquad(x,v)\in\R^{2d},\rho\ge0,
\end{equation}
where $U(x)=-\log\pi(x)$ is the potential, $\rho$ is called the \textit{refreshment rate}, and the notation $(x)_+\coloneq \max(x,0)$ stands for the non-negative part of a real number $x$. Paired with this rate function, the jump is typically given by
\begin{equation*}
    Q(z,B)=\frac{(v|\nabla U(x))_+}{\lambda(z)}Q_V(z,B)+\frac{\rho}{\lambda(z)}\mu(B),\qquad B\in\mathcal{B}(\R^d),
\end{equation*}
where $Q_V$ is called a \textit{velocity jump kernel}. Note that this choice of $Q$ only changes the velocity $v$, and PDMPs with such a kernel $Q$ are called \textit{velocity-jump PDMPs} \citep{Othmer+1988,Monmarche+2020,Durmus+2021}.

For example, BPS employs a deterministic jump called \textit{reflection}, which is defined by $Q_V(z,\cdot)=\delta_{\phi(z)}$, where $\delta_v$ is a Delta measure with its point mass on $v\in\R^d$, and $\phi:\R^{2d}\to\R^d$ is given by
\begin{equation}\label{eq-phi}
\phi(x,v)=\begin{cases}
    v-2v^\tpar,&\nabla U(x)\ne0,\\
    v,&\nabla U(x)=0,
\end{cases}
\end{equation}
where $v^\tpar\coloneq (n(x)|v)n(x)$ denotes the projection of $v$ onto $n(x)\coloneq \nabla U(x)/\abs{\nabla U(x)}$, which is defined when $\nabla U(x)\ne0$. We will introduce the FECMC jump kernel in the following subsection \ref{subsec-FECMC}. For other design strategies for $\lambda$ and $Q$, consult \cite{Fearnhead+2018,Vanetti2019,Wu-Robert2020}. Notably, \cite{Bou-Rabee-Sanz-Serna2017,Bierkens+2020,Kleppe2022} employs nonlinear dynamics as its deterministic flow.

Under the above setting, a general condition ensuring invariance has been derived in \cite{Michel+2020}, which we summarize in the following proposition.

\begin{proposition}
    Assume that the potential $U(x)$ is continuously differentiable and $C_c^1(\R^d)$ is a core of the generator $L$ given in \eqref{eq-PDMP-generator}. If for every $C^1$-function $f$ with compact support,
    \begin{equation}\label{eq-FECMC-invariance}
        \int_{\R^d}(v|\nabla U(x))_+Qf(x,v)\,\mu(\d v)=\int_{\R^d}(v|\nabla U(x))_-f(x,v)\,\mu(\d v),\qquad \pi\text{-a.s.}\; x\in\R^d,
    \end{equation}
    where $(x)_-\coloneq \min(x,0)$, then the product distribution $\pi\otimes\mu$ is the invariant distribution of the PDMP corresponding to the generator $L$.
\end{proposition}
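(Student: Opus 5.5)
Since $C_c^1(\R^d)$ is a core, it suffices to show that $\int Lf\,\d(\pi\otimes\mu)=0$ for every compactly supported $C^1$ test function $f$ of $(x,v)$. Standard results on invariant measures of Feller processes then give invariance of $\pi\otimes\mu$ (e.g.\ Ethier--Kurtz, Prop.~4.9.2, or the PDMP analogue in Davis). I would split $Lf$ into its transport, bounce and refreshment parts and show that the transport and bounce contributions cancel and the refreshment contribution vanishes.

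\emph{Transport term.} For fixed $v$, integrate by parts in $x$:
\[
\int \pi(x)(v|\nabla_xf(x,v))\,\dx=\int \pi(x)(v|\nabla U(x))f(x,v)\,\dx .
\]
There are no boundary terms because $f$ has compact support, and $\nabla\pi=-\pi\nabla U$ because $U\in C^1$. Integrating in $\mu(\d v)$ and using Fubini, the transport term equals $\iint (v|\nabla U(x))f\,\pi(\d x)\mu(\d v)$.

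\emph{Bounce term.} By definition of $\lambda$ and $Q$, this term equals
\[
\iint (v|\nabla U(x))_+\bigl(Q_Vf(x,v)-f(x,v)\bigr)\,\pi(\d x)\mu(\d v).
\]
Here I read the $Q$ in \eqref{eq-FECMC-invariance} as the velocity jump kernel $Q_V$, which is the only reading under which the condition concerns the bounce part. Applying \eqref{eq-FECMC-invariance} for each fixed $x$ replaces $\int(v|\nabla U)_+Q_Vf\,\d\mu$ by $\int(v|\nabla U)_-f\,\d\mu$. Note that $(a)_-$ is defined as $\min(a,0)$, which is nonpositive. With the usual positive-part convention the identity would carry a sign, namely $\int (v|\nabla U)_+Q_Vf\,\d\mu=\int(-(v|\nabla U))_+f\,\d\mu$. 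I would fix this convention explicitly, since it is the only point where care is needed. With it, the bounce term becomes
\[
-\iint\bigl[(v|\nabla U)_++(-(v|\nabla U))_+\bigr]f\,\d\pi\,\d\mu \quad\text{plus a correction,}
\]
and more precisely $(a)_+-(-a)_+=a$ shows that it equals $-\iint (v|\nabla U(x))f\,\d\pi\,\d\mu$. This exactly cancels the transport term.

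\emph{Refreshment term.} This term is $\rho\iint\bigl(\int f(x,v')\mu(\d v')-f(x,v)\bigr)\mu(\d v)\pi(\d x)$, which is zero because $\mu$ is a probability measure.

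The main obstacle is the justification step rather than the algebra. One must check integrability so that Fubini applies; this holds since $f$ is bounded with compact support and $\nabla U$ is continuous. One must also use that the condition holds only $\pi$-a.s.\ in $x$, which is harmless under integration against $\pi$. Finally, one must pass from $\int Lf\,\d(\pi\otimes\mu)=0$ on the core to invariance. That last step is where the core assumption enters: through $\int P_tf-f=\int_0^t\int LP_sf$ and approximating $P_sf$ in graph norm by core elements.
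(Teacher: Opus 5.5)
Your proof is correct. The paper gives no proof of its own; it quotes the result from \cite{Michel+2020}. Your argument is the standard generator computation behind that result: integration by parts with $\nabla\pi=-\pi\nabla U$ for the transport term, the balance condition for the bounce term, trivial vanishing of the refreshment term, and the core assumption to pass from $\int Lf\,\d(\pi\otimes\mu)=0$ to invariance.

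Both of your interpretive choices are the right ones.
\begin{itemize}
\item The kernel in \eqref{eq-FECMC-invariance} must be the velocity jump kernel $Q_V$. For $\rho=0$ it coincides with $Q$ wherever the weight $(v|\nabla U(x))_+$ is nonzero.
\item $(a)_-$ must mean $\max(-a,0)$. With the literal convention $\min(a,0)$, the left-hand side is nonnegative and the right-hand side nonpositive for $f\ge0$. The hypothesis could then only hold if $(v|\nabla U(x))\ge0$ for $\mu$-a.e.\ $v$, which fails for $\mu=\Unif(S^{d-1})$ wherever $\nabla U(x)\ne0$. So the statement as printed is vacuous, and your sign fix is the intended one. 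It is also the one consistent with the form of $q^\tpar$ in \eqref{eq-q-tpar-general}.
\end{itemize}

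There is one slip to clean up. Your display ``$-\iint[(v|\nabla U)_++(-(v|\nabla U))_+]f$ plus a correction'' is wrong as written. After substituting the balance condition, the bounce term is exactly $\iint[(-(v|\nabla U))_+-(v|\nabla U)_+]f\,\d\pi\,\d\mu=-\iint(v|\nabla U)f\,\d\pi\,\d\mu$. That is what your ``more precisely'' line states, so simply drop the garbled display.

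Finally, the Fubini/integrability step for the bounce term also needs $\int\abs{v}\,\mu(\d v)<\infty$. This is because $Q_Vf(x,v)$ need not vanish outside the $v$-support of $f$, and the weight is only bounded by $\abs{v}\,\abs{\nabla U(x)}$. The condition is trivial here since $\mu=\Unif(S^{d-1})$.
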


Note that invariance alone does not necessarily imply convergence. 
To ensure ergodicity, BPS requires a positive refreshment rate $\rho>0$ in \eqref{eq-lambda}; see \cite{Bouchard-Cote+2018}. Choosing $\rho$ too large injects uninformative noise, increases computational overhead, and leads to random-walk behavior.
FECMC was proposed in \cite{Michel+2020} to resolve this issue by employing a stochastic velocity jump $Q_V$. This FECMC jump kernel $Q_V$ enables the choice of $\rho=0$ without losing its ergodicity and therefore frees the algorithm from this hyperparameter. 
In the following subsection, we detail the design of $Q_V$.

\subsection{The FECMC Algorithm}\label{subsec-FECMC}

For FECMC, the velocity distribution $\mu$ is assumed to be spherically symmetric. Consider an event triggered at the point $z=(x,v)$. The FECMC jump kernel $Q_V$ operates on an orthogonal decomposition of $v$ with respect to the gradient direction $n(x)=\nabla U(x)/\abs{\nabla U(x)}$ at $x$
\[v=v^\tpar+v^\perp,\qquad v^\tpar\coloneq (v|n(x))n(x),\]
where we suppressed the dependence on $x$ in the notation. The new velocity $v_{\text{new}}\sim Q_V(z,\cdot)$ is also constructed as $v_{\text{new}}=v_{\text{new}}^\tpar+v_{\text{new}}^\perp$, where the two components are specified in turn as follows.

\begin{enumerate}[{Step}1]
    \item The component $v_{\text{new}}^\tpar$, the projection onto the gradient direction $n(x)$, is set by $v_{\text{new}}^\tpar\coloneq -wn(x)$, where the length $w=\abs{v_{\text{new}}^\tpar}$ is completely refreshed from a distribution $q^\tpar$. $q^\tpar$ is specified to ensure the invariance condition \eqref{eq-FECMC-invariance}.
    Specifically, this density $q^\tpar$ needs to take the form of
    \begin{equation}\label{eq-q-tpar-general}
        q^\tpar(w)\;\propto\; w\mu_1(w)1_{[0,\infty)}(w),\qquad w\in\R,
    \end{equation}
    where $\mu_1$ is the marginal density of the first component of $\mu$ \citep{Michel+2020}.
    Note that this component $v_{\text{new}}^\tpar$ does not depend on the previous velocity $v$.
    \item The perpendicular component $v_{\text{new}}^\perp$ is `switched' by first randomly choosing an orthogonal matrix $A\in\R^{d\times d}$ and then transforming the previous perpendicular component $v^\perp$ by $Av^\perp$. After renormalization, $v_{\text{new}}^\perp$ is specified as
    \[v_{\text{new}}^\perp=\frac{1}{\sqrt{1-w^2}}Av^\perp,\qquad A\sim\nu^x,\]
    where $\nu^x$ is a certain probability distribution defined on the space of orthogonal matrices.
\end{enumerate}

Several choices for the probability distribution $\nu^x$ are considered in \cite{Michel+2020}. Among them, the \textit{naïve orthogonal switch} performs consistently best in the reported numerical results. This approach constructs the matrix $A$ by $A\coloneq I_d-(e_1-e_2)(e_1-e_2)^\top$, where $I_d$ is the $d$-dimensional identity matrix, and $e_1$ and $e_2$ consist of a pair of orthogonal unit vectors chosen uniformly at random from the unit sphere in
\[n(x)^\perp\coloneq \{u\in\R^d\mid (u|n(x))=0\}.\]
A hyperparameter $p\in(0,1)$ can be further incorporated to control the frequency of switching, for example, by using $A$ with a probability of $p$ and $I_d$ in the other cases. Consult \citet[Section~3]{Michel+2020} for other choices for $\nu^x$ and further implementation strategies.

In our theoretical analysis, we do not specify $\nu^x$ since the changes caused by $\nu^x$ do not affect the radial direction due to the spherical symmetry of the standard Gaussian distribution. For the numerical experiments in Section~\ref{sec-Experiment}, however, we use orthogonal switches with a probability $p=0.05$, following the practice of \citet[Section~4]{Michel+2020}. For our choice of $\mu^d=\Unif(S^{d-1})$, the distribution $q^{\tpar,d}$ defined in \eqref{eq-q-tpar-general} reduces to
\begin{equation}\label{eq-q-tpar}
    q^{\tpar,d}(w)=(d-1)w(1-w^2)^{\frac{d-3}{2}}1_{(0,1)}(w),\qquad w\in\R.
\end{equation}
The asymptotic properties of this distribution play a crucial role in establishing our limit theorems; hence, they are studied in Appendix~\ref{sec-appendix-B}.

\section{Scaling Analysis of FECMC}\label{sec-ScalingAnalysis}

Throughout, we study the FECMC process $Z^d_t=(X^d_t,V^d_t)$ under the following assumption.
\begin{assumption}\label{assumption-FECMC}
    Assume that $Z^d$ admits the product-form invariant distribution $\pi^d\otimes\mu^d$, where $\pi^d=N_d(0,I_d)$ is the $d$-dimensional standard Gaussian distribution and $\mu^d=\Unif(S^{d-1})$ is the uniform distribution on the unit sphere $S^{d-1}\subset\R^d$; see also \eqref{eq-pi-d}. Additionally, we assume a stationary start: $Z_0^d\sim\pi^d\otimes\mu^d$.
\end{assumption}
Under the high-dimensional asymptotic regime $d\to\infty$, we study the limiting behavior of the following two processes: the \textit{radial momentum process}
\begin{equation}\label{eq-Rd}
    R^d_t\coloneq (X^d_t|V_t^d)=\sum_{i=1}^dX_t^{d,i}V_t^{d,i},\qquad t\ge0,
\end{equation}
and the scaled \textit{potential process}
\begin{equation}\label{eq-Yd}
    Y^d_t\coloneq \frac{2U^d(X^d_{dt})-d}{\sqrt{d}}=\frac{\abs{X^d_{dt}}^2}{\sqrt{d}}-\sqrt{d},\qquad t\ge0.
\end{equation}
Note that on the right-hand side of \eqref{eq-Yd}, $X^d$ is accelerated by a factor of $O(d)$ to derive a non-degenerate scaling limit, while $R^d$ remains on the original scale.
Both processes are hereafter viewed as random variables taking values in the Skorokhod space.
As such, we first derive a PDMP scaling limit for $R^d$, together with a few properties of this limit, in Section~\ref{subsec-radial-momentum}.
Observe that $R^d$ is the time derivative of $Y^d$. This result is then used to establish the diffusion limit of $Y^d$ in Section~\ref{subsec-Potential}.
Although both limiting processes, which we denote by $Y$ and $R$, are Markov, for any fixed $d\ge 1$, neither $X^d$ nor $R^d$ is Markov with respect to its natural filtration.
This poses a major challenge in deriving the limit theorems. This was partially resolved within a semimartingale framework in the proof of Theorem~2.10 in \cite{Bierkens+2022}. However, the argument crucially relies on the regeneration structure in $R^d$ induced by global refreshment. Therefore, further techniques are required to address the case where global refreshment is absent, i.e., $\rho=0$. This issue is specific to FECMC. For BPS, the limit theorem does not hold when $\rho=0$ due to the loss of ergodicity.
We outline a unifying framework that subsumes this boundary case in Section~\ref{subsec-proof-strategy}, while deferring all proofs to the appendices. Appendix~\ref{sec-appendix-A} contains the proof and the required auxiliary propositions for our main Theorem~\ref{thm-Y}. It uses results on the jump structure in Appendix~\ref{sec-appendix-B} and the limit theorem for $Y^d$ in Appendix~\ref{sec-appendix-C}.

\subsection{Radial Momentum}\label{subsec-radial-momentum}

The standard Rayleigh distribution (equivalently, the $\chi$ distribution with two degrees of freedom), denoted by $\chi(2)$, has the density $xe^{-x^2/2}$ on $x>0$; see, e.g., \citet[Chapter~39]{Forbes+2010}.

\begin{proposition}[Limit of the Radial Momentum Process]\label{prop-R}
    Suppose Assumption~\ref{assumption-FECMC} holds and set the global refreshment rate to zero ($\rho=0$).
    As $d\to\infty$, the radial momentum process $R^d$ \eqref{eq-Rd} converges weakly to a PDMP $R^F$, whose (extended) generator $L^F$ is given by
    \begin{equation}\label{eq-L_F}
        L^Ff(x)=f'(x)+x_+\Paren{\E[f(-\tau)]-f(x)},
    \end{equation}
    where $x_+\coloneq\min(x,0)$ and the expectation is taken with respect to $\tau\sim\chi(2)$, a standard Rayleigh random variable.
\end{proposition}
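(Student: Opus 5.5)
The plan is to identify the limit through convergence of the semimartingale characteristics of $R^d$, as in \citet[Theorem~IX.3.21]{Jacod-Shiryaev2003}. We use the generator rather than any regeneration structure, because $R^d$ is not Markov. First we record the exact dynamics of $R^d$. Between events, $\frac{\d}{\dt}R^d_t=(V^d_t|V^d_t)+(X^d_t|\nabla U^d(X^d_t))\cdot 0=\abs{V^d_t}^2=1$, since $\frac{\d}{\dt}X=V$ and $\abs{V}=1$. With $\rho=0$, events occur at rate $(V|\nabla U^d(X))_+=(V|X)_+=(R^d_t)_+$. At an event, $n(x)=x/\abs{x}$, so the new velocity satisfies $(X|v_{\text{new}})=-w\abs{X}$ with $w\sim q^{\tpar,d}$ independent of the past. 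The orthogonal switch $A\sim\nu^x$ acts only on $n(x)^\perp$ and is invisible to $R^d$.

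Applying the extended generator \eqref{eq-PDMP-generator} to $F(x,v)=f((x|v))$ with $f\in C^1_c(\R)$ then shows that
\[
M^{d,f}_t=f(R^d_t)-f(R^d_0)-\int_0^t\Paren{f'(R^d_s)+(R^d_s)_+\bigl(\E_w[f(-w\abs{X^d_s})]-f(R^d_s)\bigr)}\ds
\]
is a martingale. The only non-Markovian ingredient is $\abs{X^d_s}$, and we control it by noting that $\frac{\d}{\dt}\abs{X^d_t}^2=2R^d_t$. Hence
\[
\sup_{s\le T}\Abs{\abs{X^d_s}^2/d-1}\le\Abs{\abs{X^d_0}^2/d-1}+\frac{2T}{d}\sup_{s\le T}\abs{R^d_s}.
\]
The first term tends to $0$ by the law of large numbers under the stationary start. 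The second term tends to $0$ in probability once $\sup_{s\le T}\abs{R^d_s}$ is tight. For that tightness, we use that $R^d_t\le(R^d_0)_++t$ on $[0,T]$, since jumps only move $R^d$ to negative values. After a jump, $R^d$ equals $-w\abs{X}$, and $\sqrt d\,w$ is tight by the asymptotics of $q^{\tpar,d}$ in Appendix~\ref{sec-appendix-B}. Finally, $R^d_0=(X_0|V_0)\sim N(0,1)$ exactly, by rotation invariance.

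Next we show the compensator converges to that of $L^F$. From \eqref{eq-q-tpar}, the density of $\sqrt d\,w$ is $\frac{d-1}{d}u(1-u^2/d)^{(d-3)/2}\to ue^{-u^2/2}$, so $\sqrt d\,w\Rightarrow\tau\sim\chi(2)$ (Scheffé). Combined with the uniform control of $\abs{X^d_s}/\sqrt d$, this gives
\[
\sup_{s\le T}\Abs{\E_w[f(-w\abs{X^d_s})]-\E[f(-\tau)]}\to0
\]
in probability, because $f$ is bounded and uniformly continuous. Therefore the integrand in $M^{d,f}$ is uniformly close to $L^Ff(R^d_s)$, with the factor $(R^d_s)_+$ bounded on the tight event above. (Here $x_+$ should be read as $\max(x,0)$.)

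Tightness in the Skorokhod space follows from Aldous' criterion applied to this decomposition. The drift is bounded by $\|f'\|_\infty+\|f\|_\infty\sup\abs{R^d}$, and the jump intensity is bounded by $\sup_{s\le T}(R^d_s)_+$. Any limit point then solves the martingale problem for $L^F$ with initial law $N(0,1)$. This martingale problem is well posed, since $L^F$ generates a PDMP with locally bounded rate, unit drift, and a jump kernel independent of position (Davis' construction). Identifying the limit point with $R^F$ concludes the argument.

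I expect the main obstacle to be the tightness and uniform control of $\sup_{s\le T}\abs{R^d_s}$ from below. Negative excursions after jumps have size $w\abs{X}$, which requires the joint tail control of $\sqrt d\,w$ and $\abs{X}/\sqrt d$. The number of jumps on $[0,T]$ must also stay tight; this holds because the intensity $(R^d)_+\le(R^d_0)_++T$ is bounded on $[0,T]$.
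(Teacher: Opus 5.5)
Your proof is correct, but it uses a different formalism from the paper's. The paper writes $R^d$ as a stochastic integral against a Poisson random measure and identifies the compensator $\wt{\mu}^d$ of its jump measure. It expresses the first and modified second characteristics as $u*\wt{\mu}^d$ and $u^2*\wt{\mu}^d$ and proves one uniform convergence statement for $f*\wt{\mu}^d$ (Lemma~\ref{Lemma-wtmu-convergence}). It then checks conditions (i)--(vi) of \citet[Theorem~IX.3.48]{Jacod-Shiryaev2003}, leaving uniqueness of the limit to Lemma~IX.4.4 there. You instead pose the martingale problem with bounded test functions $f\in C^1_c$, get tightness from compact containment plus Aldous, and identify the limit by well-posedness. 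Both proofs rest on the same two inputs, $\sqrt d\,W^d\Rightarrow\chi(2)$ and $\abs{X^d_s}/\sqrt d\to1$. The paper's test functions $u,u^2$ are unbounded, so it needs moment convergence $\E[(\sqrt d\,W^d)^q]\to\E[\tau^q]$; in return it reuses the same characteristic-based machinery as the proof for $Y^d$. Your bounded $f$ need only the total-variation convergence of Proposition~\ref{prop-convergence-to-Rayleigh}. Your identity $\frac{\d}{\dt}\abs{X^d_t}^2=2R^d_t$ also gives the uniform-in-time control of $\abs{X^d_s}/\sqrt d$, which the paper uses without spelling out. The circularity you worry about is harmless. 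Stationarity gives $\E\int_0^T\abs{R^d_s}\,\ds=T\sqrt{2/\pi}$, so $\sup_{s\le T}\abs{\abs{X^d_s}^2/d-1}\to0$ in probability with no a priori tightness of $\sup\abs{R^d}$. The same bound then controls the negative post-jump values $-W_n\abs{X^d_{S_n}}$. Your route still leaves two details to make explicit. First, $L^Ff(x)=x\,\E[f(-\tau)]$ for large $x$, so $L^Ff$ is unbounded, and passing the martingale identity to the limit needs uniform integrability; $\E[(R^d_s)^2]=1$ supplies it. Second, Davis' construction gives existence only. Uniqueness for the $L^F$ martingale problem needs a localisation argument: bounded rate on compacts plus non-explosion, e.g.\ \citet[Sections~4.6 and~4.10]{Ethier-Kurtz1986}. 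Finally, reading $x_+$ as $\max(x,0)$ is correct, and it is what the paper's proof actually uses.
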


The superscript $F$ stands for FECMC. A corresponding radial momentum process $R^B$ and a generator $L^B$ for BPS will be defined subsequently in Remark~\ref{remark-sigma-BPS}.

\begin{corollary}[Number of Velocity Jumps]\label{cor-number-of-events}
    The number of velocity jumps of $Z^d$ over a fixed time interval $\ocinterval{0,T}$ satisfies
    \[\E\Square{\sum_{0\le t\le T}1_{\Brace{\Delta V_t^d\ne0}}}=\frac{T}{\sqrt{2\pi}},\qquad T>0,\]
    for all $d=1,2,\cdots$.
\end{corollary}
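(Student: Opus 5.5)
The plan is to compute the expected number of jumps via the compensator of the jump counting process. With $\rho=0$, velocity jumps of $Z^d$ occur exactly at the event times of the Poisson clock with intensity $\lambda(Z^d_t)=(V^d_t|\nabla U^d(X^d_t))_+=(V^d_t|X^d_t)_+=(R^d_t)_+$. For the FECMC kernel, an event at which $(v|\nabla U(x))>0$ almost surely changes the velocity: the new parallel component is $-w\,n(x)$ with $w>0$, whereas the old one was strictly positive. So the counting process $N^d_T=\sum_{0\le t\le T}1_{\{\Delta V^d_t\ne0\}}$ coincides almost surely with the event counting process. Its dual predictable projection (compensator) is $\int_0^T (R^d_t)_+\,\dt$.

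First I will localize to make $N^d_T-\int_0^T(R^d_t)_+\dt$ a true martingale. Since $\abs{R^d_t}\le\abs{X^d_t}$ and $\abs{X^d_t}\le\abs{X^d_0}+t$ (unit speed), the intensity is bounded on $[0,T]$ by the integrable variable $\abs{X^d_0}+T$. Monotone convergence along localizing stopping times then gives
\[\E[N^d_T]=\E\Square{\int_0^T(R^d_t)_+\,\dt}=\int_0^T\E[(R^d_t)_+]\,\dt,\]
by Fubini.

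Next I will use stationarity from Assumption~\ref{assumption-FECMC}. Since $Z^d_t\sim\pi^d\otimes\mu^d$ for every $t$, we get $\E[(R^d_t)_+]=\E[(X|V)_+]$ with $X\sim N_d(0,I_d)$ independent of $V\sim\Unif(S^{d-1})$. Conditioning on $V$ and using rotation invariance of $X$, the variable $(X|V)$ has the law $N(0,\abs{V}^2)=N(0,1)$. Hence
\[\E[(X|V)_+]=\int_0^\infty x\frac{e^{-x^2/2}}{\sqrt{2\pi}}\,\dx=\frac{1}{\sqrt{2\pi}},\]
independently of $d$, and the claimed value $T/\sqrt{2\pi}$ follows.

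The only delicate step is the identification of the jump count with the event count together with the martingale property of the compensated counting process. This follows from the standard construction of the PDMP by thinning, or equivalently from the L\'evy system of the PDMP (jump measure compensator $\lambda(Z_{t})Q(Z_{t},\d v')\,\dt$). Events with $R^d_t\le0$ have zero rate, so they do not contribute.
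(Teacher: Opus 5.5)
Your proposal is correct and follows essentially the paper's route. The paper likewise takes expectations in the jump measure against its dual predictable projection $\wt{\mu}^d$ to obtain $\E[\int_0^T (R^d_s)_+\,\ds]$, then uses stationarity ($R^d_s\sim N(0,1)$) to get $T/\sqrt{2\pi}$. Your identification of velocity jumps with events via $w>0$, and your localization/monotone-convergence step, supply details the paper leaves implicit.
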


\begin{remark}[Asymptotic Ratio of Mean Jump Frequencies]\label{remark-jump-frequency}
    In a typical implementation of PDMP algorithms, the number of jumps approximately corresponds to the number of gradient evaluations. Thus, the number of jumps is widely used as a proxy for computational complexity; see \cite{Bierkens+2019,Krauth2021}. For FECMC, this quantity is asymptotically strictly smaller than that of BPS (cf. Corollary~2.9 of \citealt{Bierkens+2022}), partly because the (optimal) FECMC process does not employ any global refreshment move (corresponding to $\rho=0$). When the asymptotically optimal choice $\rho^*$ (see Theorem~\ref{thm-formula-for-sigma} below) is employed in BPS, the expected number of jumps for FECMC is approximately
    \[\frac{\frac{1}{\sqrt{2\pi}}}{\frac{1}{\sqrt{2\pi}}+\rho^*}\approx0.218\cdots\]
    times that of BPS.
\end{remark}

\begin{remark}[Asymptotic Ratio of ESS per Event]
    Since the asymptotic variance of the ergodic average of a FECMC trajectory is inversely proportional to $\sigma^2_F$ (Theorem~\ref{thm-formula-for-sigma}), the effective sample size (ESS) is proportional to $\sigma^2_F$.
    Combining this with the estimate $\sigma^2_F/\sigma^2_B(\rho^*)\approx1.73\cdots$ in Remark~\ref{remark-optimal-sigma}, the ESS per event under FECMC is approximately $1.73/0.218=7.93\cdots$ times larger than that under BPS.
    In Section~\ref{subsec-experiment-d}, we additionally observe an approximate fifteen-fold gap in ESS per CPU time. This additional widening is explained by the per-event costs. The global refreshment move in BPS requires drawing a new $d$-dimensional velocity from $\mu^d$, followed by normalization.
\end{remark}

\begin{proposition}[Exponential Ergodicity of $R^F$]\label{prop-exponential-ergodicity}
    Let $\gamma$ denote the one-dimensional standard Gaussian distribution.
    The limiting radial momentum process $R^F$ is $\gamma$-invariant and exponentially ergodic. Specifically, there exists a function $V\in L^1(\gamma)$ and constants $c_1,c_2>0$ such that
    \[\|P^t(x,-)-\gamma\|_{\TV}\le c_1e^{-c_2t}V(x),\qquad t\ge0,\;x\in\R,\]
    where $P^t$ is the Markov transition kernel associated with $R^F$.
\end{proposition}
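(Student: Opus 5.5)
We prove two things about the one-dimensional PDMP with generator $L^F$ in \eqref{eq-L_F}, reading the rate as $x_+=\max(x,0)$ (the displayed ``$\min$'' is a typo). First, $\gamma$-invariance. Second, a Foster--Lyapunov drift condition together with a minorization on compact sets, which yields exponential ergodicity via the Down--Meyn--Tweedie theory for continuous-time processes.

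\emph{Invariance.} We check $\int L^Ff\,\d\gamma=0$ for $f\in C^1_c(\R)$, which suffices because $C^1_c$ is a core for this simple PDMP (bounded jumps in law, linear rate). Integration by parts gives
\[\int f'\varphi\,\dx=\int xf(x)\varphi(x)\,\dx,\]
where $\varphi$ is the standard normal density. The jump part contributes
\[\E[f(-\tau)]\int x_+\varphi\,\dx-\int x_+f\varphi\,\dx=\frac{1}{\sqrt{2\pi}}\E[f(-\tau)]-\int_0^\infty xf\varphi\,\dx.\]
Summing the two pieces, it remains to show
\[\int_{-\infty}^0 xf(x)\varphi(x)\,\dx+\frac{1}{\sqrt{2\pi}}\E[f(-\tau)]=0.\]
The law of $-\tau$ has density $|x|e^{-x^2/2}$ on $x<0$, which equals $\sqrt{2\pi}\,|x|\varphi(x)$. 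Hence $\frac{1}{\sqrt{2\pi}}\E[f(-\tau)]=\int_{-\infty}^0|x|f\varphi\,\dx$, and the identity follows.

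\emph{Drift condition.} The dynamics are simple. The process moves up at unit speed. While $x<0$ there are no jumps, and while $x>0$ it jumps at rate $x$ to an independent point $-\tau$. We look for $V$ with $L^FV\le -cV+b1_C$ and $V\ge1$. A natural candidate is $V(x)=e^{\beta|x|}$ for small $\beta>0$. For $x<-1$ we have $L^FV=V'(x)=-\beta V(x)$, which gives the required drift. For $x>1$,
\[L^FV=\beta e^{\beta x}+x\bigl(\E e^{\beta\tau}-e^{\beta x}\bigr),\]
and since $\E e^{\beta\tau}<\infty$ the right-hand side is at most $-(x-\beta)e^{\beta x}+Cx$, which is $\le -V$ for large $x$. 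On the compact set $[-K,K]$ the generator applied to $V$ is bounded. Since $V$ is not differentiable at $0$, we smooth it there, for example by taking $V(x)=e^{\beta\sqrt{1+x^2}}$. In fact any $V$ with $V(x)=e^{-\beta x}$ for $x\le -1$ and polynomial or exponential growth for $x>0$ works, because the jump rate $x$ dominates. This $V$ lies in $L^1(\gamma)$.

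\emph{Minorization and the main obstacle.} The main difficulty is a minorization (petite set) condition on $C=[-K,K]$: we need $T>0$, $\epsilon>0$ and a nontrivial measure $\nu$ with $P^T(x,\cdot)\ge\epsilon\nu$ for all $x\in C$. The deterministic flow alone produces no density, so we construct it from a single jump. Starting from $x\in[-K,K]$, the process reaches $K+1$ by time $2K+1$ with probability bounded below, namely the probability of no jump before that time, which is at least $\exp(-\int_0^{K+1}y\,\d y)$. It then jumps within the next unit of time with probability bounded below, landing at $-\tau$, which has a density. After the jump it drifts upward and meets the jump-free region, where no further jump occurs with probability bounded below.

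To remove the dependence of the timing on $x$, we fix the horizon $T=2K+3$ and account for the random jump time $s$. The position at time $T$ is then $-\tau+(T-s)$, and this has a density bounded below by a positive constant on some interval $I$, uniformly over $x\in C$. Here we use that $\tau$ has a continuous positive density on $(0,\infty)$ and that, conditional on a jump occurring at time $s$ in a unit window, the landing point still has such a density.

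This gives $P^T(x,\cdot)\ge\epsilon\,\mathrm{Leb}|_I$ for all $x\in C$, so $C$ is small and the process is $\mathrm{Leb}|_I$-irreducible and aperiodic (the same argument works for all large $T$). Combining the drift condition with the smallness of compact sets, Theorem~5.2 of Down, Meyn and Tweedie (1995) yields $\|P^t(x,\cdot)-\gamma\|_{\TV}\le c_1e^{-c_2t}V(x)$, and $\gamma$ is the unique invariant law by the first step.
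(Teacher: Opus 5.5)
Your proposal is correct and follows essentially the same route as the paper: a Foster--Lyapunov drift condition with a Lyapunov function growing like $e^{-\beta x}$ as $x\to-\infty$ (where the rate $x_+$ vanishes and the process can only return at unit speed), fed into a Harris-type theorem (Down--Meyn--Tweedie for you, Kulik's Theorem~3.2.3 in the paper).

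Your version is in fact more complete than the paper's. The paper asserts rather than proves both the minorization on compacts and the $\gamma$-invariance, and its jump term for $x>0$ should involve $\E[V(-\tau)]=\E[e^{\tau}]$, as in your computation. The only loose point is your claim that $C^1_c$ is a core: for such $f$ one has $L^Ff(x)=x\,\E[f(-\tau)]$ for large $x$, which is unbounded. Invariance is better justified by a PDMP-specific criterion, or simply by noting that the stationary processes $R^d$ have exact $N(0,1)$ marginals, which pass to $R^F$ via Proposition~\ref{prop-R}.
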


\subsection{Potential}\label{subsec-Potential}

Now we formulate our main result, first for the case $\rho=0$. The case $\rho>0$ is covered in Theorem~\ref{thm-Y-positive-rho}.

\begin{theorem}[Scaling Limit of the Potential Process when $\rho=0$]\label{thm-Y}
    Suppose Assumption~\ref{assumption-FECMC} holds and set the global refreshment rate to zero ($\rho=0$).
    As $d\to\infty$, the scaled potential process $Y^d$ converges in law to an Ornstein--Uhlenbeck (OU) process that solves an SDE
    \begin{equation}\label{eq-Y-FECMC}
        \d Y_t=-\frac{\sigma_F^2}{4}Y_t\,\dt+\sigma_F\,\d B_t,
    \end{equation}
    where $\sigma_F^2\coloneq \sqrt{32/\pi}$ and $(B_t)_{t\ge0}$ is a one-dimensional standard Brownian motion.
\end{theorem}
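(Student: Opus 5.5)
The plan is to write $Y^d$ as an additive functional of the fast radial momentum process, and to prove a functional CLT for it through a Poisson-equation/martingale decomposition carried out in the semimartingale framework, where compensators are computed from the dual predictable projection of the jump measure of $R^d$. Since $\frac{\d}{\dt}\abs{X^d_t}^2=2R^d_t$, the definition \eqref{eq-Yd} gives
\[
Y^d_t-Y^d_0=\frac{2}{\sqrt d}\int_0^{dt}R^d_s\,\ds .
\]
This is a sum over a time window of length $d$ of a quantity that, by Proposition~\ref{prop-R} and Proposition~\ref{prop-exponential-ergodicity}, mixes on an $O(1)$ scale and has mean zero under $\gamma$. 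So a $1/\sqrt d$-normalized CLT is expected. The OU drift must come from the dependence of the jump law of $R^d$ on $\abs{X^d}$, that is, on $Y^d$ itself.

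\textbf{Step 1 (Poisson equation).} First I solve $L^F\varphi(x)=x$ explicitly. Here $L^F$ is the generator in \eqref{eq-L_F}, with jump rate equal to the positive part $x_+=\max(x,0)$, i.e.\ the event rate $(v|\nabla U)_+=R_+$. On the half-line where no jumps occur, the equation is a first-order ODE, $\varphi'=x$. On the other half-line it is the linear ODE $\varphi'-x\varphi=x-xc$, where $c=\E[\varphi(-\tau)]$. I will solve both pieces in closed form and then fix $c$ self-consistently using $\tau\sim\chi(2)$. I also record polynomial growth bounds on $\varphi$ and $\varphi'$. The limiting diffusivity is then
\[
\sigma_F^2=-8\int \varphi\,L^F\varphi\,\d\gamma=-8\int x\,\varphi(x)\,\gamma(\dx),
\]
which I will evaluate to $\sqrt{32/\pi}$ by Gaussian integrals.

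\textbf{Step 2 (decomposition at finite $d$).} Next I apply the change-of-variables formula to $\varphi(R^d_s)$. Between events $\frac{\d}{\ds}R^d_s=\abs{V^d_s}^2=1$. At an event triggered at rate $(R^d_s)_+$, the new value is $R^d_{s}=-w\abs{X^d_s}$ with $w\sim q^{\tpar,d}$. Using the compensator of the jump measure under the filtration of $Z^d$, I get
\[
\varphi(R^d_s)-\varphi(R^d_0)=\int_0^s\Big(R^d_u+(R^d_u)_+\big(\E_w[\varphi(-w\abs{X^d_u})]-\E[\varphi(-\tau)]\big)\Big)\,\du+M^d_s ,
\]
where $M^d$ is a local martingale. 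I then use Appendix~\ref{sec-appendix-B}-type asymptotics: $\sqrt d\,w\to\tau$ in law, and $\abs{X^d_u}=\sqrt d\,(1+Y^d_{u/d}/(2\sqrt d)+O(1/d))$. These give
\[
\E_w[\varphi(-w\abs{X^d_u})]-\E[\varphi(-\tau)]
=-\frac{Y^d_{u/d}}{2\sqrt d}\,\E[\tau\varphi'(-\tau)]+o(d^{-1/2})
\]
plus the error from the law of $\sqrt d\,w$ itself, which I also expect to be $O(1/d)$.

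Solving for $\int R^d$ and multiplying by $2/\sqrt d$ yields
\[
Y^d_t-Y^d_0=-\frac{2}{\sqrt d}M^d_{dt}+\frac{1}{d}\int_0^{dt}(R^d_u)_+\,\E[\tau\varphi'(-\tau)]\,Y^d_{u/d}\,\du+o_P(1).
\]
The boundary term $\frac{2}{\sqrt d}(\varphi(R^d_{dt})-\varphi(R^d_0))$ is negligible by stationarity and the growth bounds. Since $Y^d$ is slow, an averaging argument replaces $(R^d_u)_+$ by its $\gamma$-mean $1/\sqrt{2\pi}$. This produces a drift $\kappa Y^d_t$ with $\kappa=\E[\tau\varphi'(-\tau)]/\sqrt{2\pi}$, and I will check that $\kappa=-\sigma_F^2/4$. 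That identity is also forced by the stationarity of $N(0,2)$, the limit law of $Y^d_0$.

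\textbf{Step 3 (limits).} The predictable quadratic variation of $\frac{2}{\sqrt d}M^d_{d\cdot}$ is $\frac4d\int_0^{dt}(R^d_u)_+\,\mathrm{Var}_{\text{jump}}(\varphi)\,\du$. By the same averaging it converges to $\sigma_F^2t$, because $\int x_+\,\E[(\varphi(-\tau)-\varphi(x))^2]\,\d\gamma=-2\int\varphi L^F\varphi\,\d\gamma$. Jumps of this martingale vanish uniformly, so the martingale CLT (Jacod--Shiryaev) applies, and uniqueness for the OU martingale problem identifies the limit as \eqref{eq-Y-FECMC}.

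\textbf{Main obstacle.} The hardest part is the averaging/ergodic step for the non-Markov process $R^d$ without regeneration. I must show that $\frac1d\int_0^{dt}h(R^d_u)\,\du\to t\int h\,\d\gamma$ in probability, uniformly enough to be coupled with the slow factor $Y^d$. I would handle this by a second Poisson equation $L^F\psi=h-\gamma(h)$, applying the same compensator computation so that the time average equals a boundary term plus a martingale of size $O(1/\sqrt d)$ relative to $d$, plus $O(d^{-1/2})$ corrections. Stationarity and the polynomial moment bounds on $R^d$ make these errors uniformly small. The same estimates also give tightness of $Y^d$ in the Skorokhod space.
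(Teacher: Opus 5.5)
Your argument is correct, but it takes a genuinely different route from the paper's.

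\textbf{The paper's route.} The paper never solves a Poisson equation for $L^F$. It samples $Y^d$ at the event times, giving the skeleton $\ov{Y}^d$ of \eqref{eq-ovY}. It then uses the fact that at $\rho=0$ every event resets the radial momentum to $-\abs{X^d}W^d$ independently of the past. So one inter-event cycle has length $\abs{X^d}W^d+\tau$ and changes the scaled potential by exactly $d^{-1/2}(\tau^2-\abs{X^d}^2(W^d)^2)$. The explicit conditional moments of these increments (Lemmas~\ref{lemma-W}, \ref{lemma-T}, \ref{lemma-DeltaYd}) are checked against the OU characteristics in Jacod--Shiryaev's Theorem~IX.3.48. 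A separate final step shows $\sup_{t\le T}\abs{Y^d_t-\ov{Y}^d_t}\to0$.

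\textbf{Your route.} Your continuous-time corrector argument replaces this cycle structure with the Poisson equation, and that equation turns out to be elementary. The non-exploding solution is $\varphi(x)=\frac12\max(-x,0)^2$, constant on $x>0$. This gives $-8\int x\varphi\,\d\gamma=8/\sqrt{2\pi}=\sigma_F^2$. The finite-$d$ correction is exactly $\E_w[\varphi(-w\abs{X^d_u})]-1=(\sqrt d\,Y^d_{u/d}-1)/(d+1)$, so no expansion is needed. The two routes track the same mechanism: the reset term $-\abs{X^d}^2(W^d)^2/\sqrt d$ in the paper's cycle increment is precisely the jump of your $-\frac{2}{\sqrt d}M^d$ at an event.

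\textbf{What each buys.} The paper's route needs no ergodic averaging lemma, because the law of large numbers for $(R^d)_+$ is built into the per-cycle moments (mean cycle length $\sqrt{2\pi}$). Your route avoids the skeleton and the equivalence step. It also yields the Green--Kubo identity directly at $\rho=0$, which is exactly the homogenization input that Section~\ref{subsec-proof-strategy} says is available only a posteriori via Corollary~\ref{cor-continuity-at-0}. And it does not depend on the exact reset structure, so it should extend to $\rho>0$ with a modified corrector.

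\textbf{The cost: the averaging step.} Your second corrector is also explicit: $-x/\sqrt{2\pi}$ on $x\le0$ and a Mills-ratio term on $x\ge0$. The slow factor is handled by the product rule, since $u\mapsto Y^d_{u/d}$ is $C^1$ with derivative $2R^d_u/\sqrt d$. One caveat: for uniform negligibility of the boundary term $\frac{2}{\sqrt d}\varphi(R^d_{d\cdot})$ and of the martingale jumps, fixed-time stationarity is not enough. You need a union bound over the $O(d)$ events, computed through the compensator, because $X^d$ at event times is weighted by $(R^d)_+$.
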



\begin{remark}[Comparison with BPS]\label{remark-sigma-BPS}
    The functional form of \eqref{eq-Y-FECMC} coincides with that of BPS, as reported in Theorem 2.10 of \cite{Bierkens+2022}. The only difference is that the coefficient $\sigma_F^2$ is replaced by
    \begin{equation}\label{eq-sigma-B-primitive}
        \sigma_B^2(\rho)\coloneq 8\int^\infty_0e^{-\rho t}K^B(t,0)\,\dt,\qquad\rho>0,
    \end{equation}
    where $K^B$ is the covariance function of the BPS limiting radial momentum process $R^B$. The generator $L^B$ associated with $R^B$ reads
    \begin{equation}\label{eq-L_B}
        L^Bf(x)=f'(x)+x_+\Paren{f(-x)-f(x)}.
    \end{equation}
    Therefore, the limiting process of FECMC is a time-changed version of BPS's limiting potential process through $t\mapsto (\sigma^2_F/\sigma^2_B(\rho))t$. For the optimal choice $\rho^*$ for BPS, the Monte Carlo estimate reported in \cite{Bierkens+2022} gives $\sigma^2_F/\sigma^2_B(\rho)\approx1.77$, indicating that the limiting FECMC potential dynamics are faster than those of BPS. Another approach for comparison may be formulated in terms of the \textit{speed measure}, as in \cite{Roberts+1997}, which can be defined for a broader class of diffusions \cite[Section VII.3]{Revuz-Yor1999}. For OU processes, the speed measure is inversely proportional to the drift coefficient. Note that the process is faster when the speed measure is smaller. Moreover, $t\mapsto\sigma_F^2t,\sigma_B^2t$ are the quadratic variations of these limiting processes. Larger values of $\sigma_F,\sigma_B$ are therefore favorable also from the viewpoint of quadratic variation, a continuous-time analogue for expected squared jump distance (ESJD) in discrete-time settings \citep{Sherlock2006,Sherlock-Roberts2009,Beskos+2013}.
\end{remark}

To facilitate a rigorous comparison of $\sigma_F$ and $\sigma_B$, we introduce the positive global refreshment rate $\rho>0$ into FECMC, making $\sigma_F$ a function of $\rho>0$, as is the case with the BPS diffusion coefficient $\sigma_B$ in \eqref{eq-sigma-B-primitive}. FECMC with global refreshment ($\rho>0$) is contrary to the recommendation of \cite{Michel+2020}. Our purpose here is twofold: (1) to theoretically confirm that the choice $\rho=0$ is indeed optimal for FECMC, and (2) to establish analytically that $\sigma_B(\rho)<\sigma_F$ holds for all $\rho>0$, which will be carried out in Section~\ref{sec-Diffusivity}.

\begin{theorem}[Scaling Limit of the Potential Process when $\rho>0$]\label{thm-Y-positive-rho}
    Suppose Assumption~\ref{assumption-FECMC} holds and the global refreshment rate is positive ($\rho>0$).
    As $d\to\infty$, the scaled potential process of FECMC converges in law to the OU process that solves the SDE \eqref{eq-Y-FECMC} with $\sigma_F$ replaced by
    \begin{equation}\label{eq-sigma-F-primitive}
    \sigma_F^2(\rho)\coloneq 8\int^\infty_0e^{-\rho t}K^F(t,0)\,\dt
    \end{equation}
    where $K^F$ is the covariance function of the FECMC limiting radial momentum process $R^F$.
\end{theorem}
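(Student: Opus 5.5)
Since $Y^d_t=(|X^d_{dt}|^2-d)/\sqrt d$ and $\frac{\d}{\dt}|X^d_t|^2=2R^d_t$, the plan is to write
\[
Y^d_t=Y^d_0+\frac{2}{\sqrt d}\int_0^{dt}R^d_s\,\ds
\]
and treat the time integral of the fast radial momentum via a Poisson-equation (corrector) argument. The difference from Theorem~\ref{thm-Y} is only that $R^d$ now also jumps at global refreshment times. At such a time $V$ is redrawn from $\mu^d$, so $R^d$ is replaced by $|X^d|\,\xi$ with $\xi$ the first coordinate of a uniform point on $S^{d-1}$. Since $|X^d|\approx\sqrt d$ on the relevant timescale, this is approximately a fresh $N(0,1)$ draw, independent of the past. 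I would therefore first extend Proposition~\ref{prop-R}. The process $R^d$ converges to the PDMP with generator
\[
L^F_\rho f(x)=L^Ff(x)+\rho\paren{\gamma(f)-f(x)},
\]
with the same jump analysis from Appendix~\ref{sec-appendix-B} plus the elementary convergence of $\sqrt d\,\xi$ to $\gamma$. This limit is still $\gamma$-invariant, because the added term preserves $\gamma$. It is exponentially ergodic, and in fact uniformly so, since it regenerates at rate $\rho$.

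Next I would set up the martingale decomposition. I would use the solution $g_\rho$ of $-L^F_\rho g=\id$, i.e. $g_\rho(x)=\int_0^\infty \E_x[R^F_t]\,\dt$. The key identity is that under $L^F_\rho$ the conditional mean $\E_x[R_t]$ equals $e^{-\rho t}$ times the corresponding quantity for $L^F$. This holds because a refreshment resets the state to a mean-zero $\gamma$ draw independent of the past, after which the conditional mean is zero. Hence $\E_\gamma[R_t R_0]=e^{-\rho t}K^F(t,0)$. Then I would apply $g_\rho$ to the prelimit radial momentum, using the dual predictable projection approach from Section~\ref{subsec-proof-strategy}. This gives
\[
\frac{2}{\sqrt d}\int_0^{dt}R^d_s\,\ds=-\frac{2}{\sqrt d}\paren{g_\rho(R^d_{dt})-g_\rho(R^d_0)}+\frac{2}{\sqrt d}M^d_{dt}+\text{error},
\]
where the error collects the discrepancy between the prelimit compensator of $R^d$ and $L^F_\rho$. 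That discrepancy has two sources: the $|X^d|/\sqrt d$ and jump-kernel $q^{\tpar,d}$ corrections, and the drift coming from $|X^d|^2$ itself. The boundary term vanishes, since $g_\rho$ has linear growth and $R^d$ is tight in stationarity.

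The martingale part is handled by a martingale CLT in the style of \cite{Jacod-Shiryaev2003}. The predictable quadratic variation $\frac{4}{d}\langle M^d\rangle_{dt}$ should converge to $t$ times
\[
4\cdot 2\int g_\rho(-L^F_\rho)g_\rho\,\d\gamma=8\int_0^\infty e^{-\rho t}K^F(t,0)\,\dt=\sigma_F^2(\rho),
\]
by the ergodic theorem, now in the simpler regenerative form of \cite{Bierkens+2022}. The drift $-\frac{\sigma_F^2}{4}Y$ comes from the $O(d^{-1/2})$ correction to the $R^d$ dynamics, proportional to $Y^d$. Equivalently it can be pinned down by reversibility of the limit with respect to $N(0,2)$, the stationary law of $Y$, as in Theorem~\ref{thm-Y}. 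Tightness in the Skorokhod space follows from Aldous' criterion using the moment bounds already established.

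The main obstacle I expect is controlling the error term uniformly. One must show that the leading dependence of the prelimit compensator on $|X^d|/\sqrt d$ produces exactly the drift $-\frac{\sigma_F^2(\rho)}{4}Y$ with the same $\rho$-dependent constant. I would handle this by expanding to first order in $Y^d/\sqrt d$ and checking the constant via the Poisson equation, reusing the computations of Appendix~\ref{sec-appendix-C} verbatim with $L^F$ replaced by $L^F_\rho$.
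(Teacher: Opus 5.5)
Your proposal is correct in outline, but it takes a different route from the paper. The paper gives no separate argument for this theorem. It defers to the proof of Theorem~2.10 in \cite{Bierkens+2022}, which exploits the regeneration structure created by refreshment. That argument studies the skeleton of $Y^d$ along regeneration times and computes conditional moments of the cycle increments from the jump structure in Appendix~\ref{sec-appendix-B} (laws of $W^d$, $\tau$ and the refreshed $\sqrt d\,\xi$), with residuals controlled as in \cite{Bierkens+2022}. It then checks convergence of the semimartingale characteristics via \citet[Theorem~IX.3.48]{Jacod-Shiryaev2003}, and the weight $e^{-\rho t}$ comes from the exponential inter-refreshment times.

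You instead run a continuous-time corrector argument on the Markov pair $(\abs{X^d}^2,R^d)$. This gives a direct link to Section~\ref{sec-Diffusivity}. Since $\gamma$ is $L^F$-invariant, $\gamma\bigl((\rho-L^F)^{-1}\id\bigr)=0$, so your $g_\rho$ coincides with $f^F_\rho$ in \eqref{eq-f-to-resolvent}. Your martingale variance is therefore literally the Green--Kubo expression $8\E_\gamma[Rf^F_\rho(R)]$ of \eqref{eq-sigma-to-resolvent}. The drift constant also closes without evaluating $\E[\tau g_\rho'(-\tau)]$ or $\E[Zg_\rho'(Z)]$. Let $\cA^d$ be the generator of $(\abs{X^d}^2,R^d)$ and $\Psi(u,r)=d^{-1/2}(u-d)\,g_\rho(r)$. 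Using $\E[\cA^d\Psi]=0$ in stationarity and expanding to first order in $Y^d/\sqrt d$ shows that the averaged drift coefficient equals $-2\E_\gamma[Rg_\rho(R)]=-\sigma_F^2(\rho)/4$, consistent with your $N(0,2)$ argument.

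Your route does need two ergodic-averaging steps that are uniform in $d$ over windows of length $dT$: one for $\langle M^d\rangle$ and one for the drift term $Y^d(R^d)_+$. These do not follow from weak convergence of $R^d$ on compact intervals, so extending Proposition~\ref{prop-R} is not what you need. They do follow by reusing the corrector with $-L^F_\rho h=F-\gamma(F)$. Likewise, the boundary term needs $\sup_{s\le dT}\abs{R^d_s}=o_{\P}(\sqrt d)$ rather than marginal tightness. A union bound over unit intervals with a fourth-moment estimate gives this, as in the paper's asymptotic-equivalence step.

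The trade-off is this. The paper's skeleton approach avoids Poisson equations and reuses the semimartingale machinery it needs anyway for $\rho=0$. Yours leans on the spectral gap that $\rho>0$ supplies, and in exchange makes the identification of $\sigma_F^2(\rho)$ with the resolvent solved in Theorem~\ref{thm-formula-for-sigma} immediate.
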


\begin{remark}[Continuity at $\rho=0$]
    One might expect that Theorem~\ref{thm-Y} should be covered by taking the limit $\rho\searrow0$, i.e., $\sigma_F^2(\rho)\to\sigma_F^2$. However, this convergence turns out to be very delicate.
    We establish this result in Corollary~\ref{cor-continuity-at-0} below.
    By contrast, for BPS, this limit as $\rho\to0$ turns out to be zero, reflecting the failure of the limit theorem.
\end{remark}

The above theorem is closely related to Theorem~2.10 in \cite{Bierkens+2022}, which states a corresponding result for BPS. The argument in \cite{Bierkens+2022} relies on the regeneration structure induced by the global refreshment, combined with careful control of residual terms using Stein's method. However, Theorem~\ref{thm-Y} cannot be handled by the same approach and requires new techniques. We discuss our approach to addressing this in the following subsection.

\subsection{Proof Strategy}\label{subsec-proof-strategy}

A classical approach to weak convergence results such as Proposition~\ref{prop-R} and Theorems~\ref{thm-Y}, \ref{thm-Y-positive-rho} is to apply a Trotter--Kato type theorem, in which convergence is derived from the associated Markov semigroups or generators; see, e.g., \cite{Trotter1958,Yoshida1995}. However, in our setting, the processes $R^d$ and $Y^d$ are not Markov with respect to their natural filtrations. There are primarily two approaches to derive scaling limits for such non-Markovian processes. Let $A^1,A^2,\cdots$ be a sequence of such processes. One approach is to embed $A^d$ into a proper Markov process $\overline{A}^d$ on an augmented state space and view $A^d$ as its projection. Within the general framework of Section~4.8 in \cite{Ethier-Kurtz1986}, a suitable notion of generator convergence for $\overline{A}^d$ yields weak convergence of the marginal processes $A^d$ towards a possibly non-Markovian limiting process. Many of the previous works employ this strategy; see, e.g., \cite{Roberts+1997,Roberts-Rosenthal1998,Bierkens-Roberts2017,Yang+2020,Agrawal+2023} for applications of this approach. In doing so, one typically needs homogenization-type results in advance, such as Lemma~2.1 in \cite{Roberts+1997}, to verify the required generator convergence. In our case, however, the relevant homogenization-type result for limiting coefficients is obtained only after the limit theorem has been established (Corollary~\ref{cor-continuity-at-0}). This difference essentially arises from our choice of subject: we study the potential process instead of the finite-dimensional marginal coordinate processes, as in \cite{Roberts+1997}. Because the potential process is an additive functional, its diffusivity is a time integral involving the corresponding time-derivative process, known as the Green--Kubo formula; see Equation~\eqref{eq-sigma-to-resolvent}. Establishing that this quantity is finite, however, is generally delicate. A natural route is to derive an estimate for the resolvent operator $(\lambda-L)^{-1}$ as $\lambda\searrow0$, associated with the generator $L$, in the spirit of the Kipnis--Varadhan program \citep{Kipnis-Varadhan1986}, but this can be technically demanding; see also Section~\ref{sec-Conclusion}.

We therefore adopt a second approach to establish scaling limit theorems for the potential process and the radial momentum process. This approach yields a cleaner separation between the probabilistic derivation of the limit and the subsequent analytic reinterpretation. We view $A^d$ as a semimartingale and work with its predictable characteristics; see \citet[Definition~II.2.6]{Jacod-Shiryaev2003} and \citet[Definition~32.2]{Metivier1982} for a precise definition. The semimartingale characteristics are a triplet that can be regarded as a probabilistic generalization of the generator of a Markov process, as they determine the distribution of a semimartingale under suitable conditions. To establish convergence, we prove that the characteristics converge in an appropriate sense within the framework of \citet[Chapter~IX]{Jacod-Shiryaev2003}. PDMPs are particularly well suited for this semimartingale approach because their underlying point process structure naturally gives rise to semimartingale decompositions. This strategy has been applied to PDMPs in \cite{Bierkens+2022} and to MH-based methods in \cite{Kamatani2018,Kamatani2020}, and is closely related to martingale CLT approaches to function space MCMC methods \citep{Mattingly+2012,Pillai+2012,Pillai+2014,Ottobre+2016}.


\section{Diffusivity of FECMC and BPS}\label{sec-Diffusivity}

In this section, we derive analytic expressions for the diffusion coefficient $\sigma_F^2,\sigma_B^2$, which appeared in our limit theorems in Equations~\eqref{eq-sigma-F-primitive} and \eqref{eq-sigma-B-primitive} respectively. Perhaps surprisingly, the resulting formulae obtained in Theorem~\ref{thm-formula-for-sigma} are very explicit, being rational functions of the moment generating function of the standard Rayleigh distribution; see Section~\ref{subsec-formulae-for-sigma}. The values of $\sigma_F^2$ and $\sigma_B^2$ computed from Theorem~\ref{thm-formula-for-sigma} are plotted as functions of $\rho>0$ in Figure~\ref{fig-sigma}. As illustrated in Figure~\ref{fig-sigma}, these representations enable a rigorous comparison of asymptotic efficiency between FECMC and BPS. Additionally, we can conclude that any global refreshment introduced by setting $\rho>0$ will slow down the limiting process and thus deteriorate the efficiency of FECMC; see Section~\ref{subsec-optimal-FECMC}. All proofs are deferred to Appendix~\ref{sec-appendix-D}.

\subsection{Analytic Formulae for Diffusion Coefficients}\label{subsec-formulae-for-sigma}

$\sigma^2_F$ and $\sigma^2_B$ turn out to be determined solely through associated radial momentum processes $R^F,R^B$.
Although the corresponding two generators $L^F$ and $L^B$ differ only in one term, the mixing properties of $R^F$ associated with $L^F$ are significantly improved over those of $R^B$. To see this, we focus on the function
\begin{equation}\label{eq-f-to-resolvent}
    f^F(x;\rho)\coloneq \int^\infty_0e^{-\rho t}\E[R_t^F|R_0^F=x]\,\dt=(\rho-L^F)^{-1}\id(x),\qquad\rho>0,
\end{equation}
where $\id(x)=x$ is the identity function. $f^B$ is defined analogously for BPS. The inverse operator $(\rho-L^F)^{-1}$ that appears on the right-hand side of \eqref{eq-f-to-resolvent} is called \textit{the resolvent} associated with the generator $L^F$, and it is well-defined for each $\rho>0$; see \citet[Proposition~2.1 p.10]{Ethier-Kurtz1986} or \citet[Theorem~II.1.10]{Engel-Nagel2000}. Using this function $f^F$, the diffusion coefficient $\sigma_F$ is expressed as
\begin{equation}\label{eq-sigma-to-resolvent}
    \frac{\sigma_F^2(\rho)}{8}=\int^\infty_0e^{-\rho t}\E[R^F_0R^F_t]\,\dt
    =\E\SQuare{R^F_0f(R^F_0;\rho)},\qquad\rho>0.
\end{equation}
The first equality is simply a rewriting of \eqref{eq-sigma-F-primitive} and expresses the diffusion coefficient as an integrated autocorrelation function; it is known as the \textit{Green--Kubo formula} \citep{Kubo+1991,Pavliotis2010}. The second equality holds due to Fubini's theorem after taking the conditional expectation. Consequently, using Equation~\eqref{eq-sigma-to-resolvent}, we will obtain explicit formulae for $\sigma_F$ and $\sigma_B$ by solving the resolvent equation \eqref{eq-f-to-resolvent} for $f^F$. The result turns out to be a rational function of the moment generating function of the standard Rayleigh distribution. Before stating the results, we introduce the following notation:
\begin{equation}\label{eq-Omega}
    \Omega(\rho)\coloneq \sqrt{\frac{\pi}{2}}\rho\erfcx\paren{\frac{\rho}{\sqrt{2}}}=\rho e^{\frac{\rho^2}{2}}\int^\infty_\rho e^{-\frac{t^2}{2}}\,\dt,
\end{equation}
where $\erfcx(x)=\frac{2e^{x^2}}{\sqrt{\pi}}\int^\infty_xe^{-t^2}\,\dt$ is the \textit{exponentially scaled complementary error function}; see \cite{Cody1969,Oldham+2009}. This $\Omega$ is related to the moment generating function through the relationship $\E[e^{\rho\tau}]=1-\Omega(-\rho)$, where $\tau\sim\chi(2)$; see Appendix~\ref{sec-appendix-D}. More importantly, this function $\Omega$ is numerically stable and was used to produce Figure~\ref{fig-sigma}, thereby avoiding numerical overflows due to scale differences.

\begin{theorem}[Formulae for $\sigma_F,\sigma_B$]\label{thm-formula-for-sigma}
    Under the same assumptions as Theorem~\ref{thm-Y-positive-rho},
    \begin{equation}\label{eq-sigma-F}
        \sigma_F^2(\rho)=\sigma_F^2\Paren{1-\frac{\paren{\rho^2-\rho\sqrt{\frac{\pi}{2}}+\Omega(\rho)}^2}{\rho^4\Omega(\rho)(2-\Omega(\rho))}},
    \end{equation}
    where $\sigma_F^2=\sqrt{32/\pi}$, and
    \begin{equation}\label{eq-sigma-B}
    \sigma_B^2(\rho)=\frac{8}{\rho^4}\paren{\rho^3-\rho^2\sqrt{\frac{8}{\pi}}+\rho-\sqrt{\frac{8}{\pi}}\frac{\paren{(1+\rho^2)\Omega(\rho)-\rho^2}^2}{\Omega(2\rho)}}.
    \end{equation}
\end{theorem}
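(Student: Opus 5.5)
The plan is to use the Green--Kubo representation \eqref{eq-sigma-to-resolvent}, $\sigma^2(\rho)/8=\E[R_0 f(R_0;\rho)]$ with $R_0\sim\gamma$ (the $\gamma$-invariance is Proposition~\ref{prop-exponential-ergodicity}; for $R^B$ it is the analogous fact from \cite{Bierkens+2022}). The core step is to solve the resolvent equation $(\rho-L)f=\id$ in closed form, treating it as a first-order ODE with a nonlocal term, separately on $x<0$ and $x>0$. Throughout I read $x_+$ in \eqref{eq-L_F} as the positive part $\max(x,0)$. The first step is to pin down which solution is the resolvent. I will show that $f=(\rho-L)^{-1}\id$ has at most linear growth, since $|\E[R_t\mid R_0=x]|\le |x|+t$ because $|\dot R|=1$ between jumps and jumps only decrease $|R|$ or resample. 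I will also show that it is absolutely continuous along the flow. Together these select a unique solution of the ODE system.

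\emph{FECMC.} Write $c\coloneq \E[f(-\tau)]$, which is an unknown constant. On $x<0$ the equation reads $\rho f-f'=x$, whose general solution is
\[
f(x)=\frac{x}{\rho}+\frac{1}{\rho^2}+Ae^{\rho x}.
\]
On $x>0$ it reads $(\rho+x)f-f'=x(1+c)$. The only solution of linear growth is
\[
f(x)=(1+c)\,e^{\rho x+x^2/2}\int_x^\infty s\,e^{-\rho s-s^2/2}\,ds ,
\]
because any added multiple of $e^{\rho x+x^2/2}$ is excluded by the growth bound. Continuity at $0$ gives $1/\rho^2+A=(1+c)\,g(\rho)$, where $g(\rho)=\int_0^\infty s e^{-\rho s-s^2/2}ds=1-\Omega(\rho)$.

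The self-consistency condition is $c=\E[f(-\tau)]=-\E[\tau]/\rho+1/\rho^2+A\,\E[e^{-\rho\tau}]$, with $\E[\tau]=\sqrt{\pi/2}$ and $\E[e^{-\rho\tau}]=1-\Omega(\rho)$. This is two linear equations in $(A,c)$, and solving it gives $A,c$ as rational functions of $\rho$ and $\Omega(\rho)$.

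It then remains to compute $\E[Rf(R)]$ for $R\sim\gamma$. The negative half contributes elementary terms together with $\E[R e^{\rho R};R<0]$, which is again expressible through $\Omega(\rho)$. On the positive half, the factor $e^{x^2/2}$ cancels the Gaussian density, and Fubini reduces the double integral to $\int_0^\infty\!\int_x^\infty x s\, e^{\rho(x-s)-s^2/2}\,ds\,dx$. This is a one-variable Gaussian integral in $s$ and again gives $\Omega$-terms. The final step for FECMC is to collect everything and factor it into the form \eqref{eq-sigma-F}, checking along the way that the limit $\rho\to0$ is consistent with $\sigma_F^2=\sqrt{32/\pi}$.

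\emph{BPS.} The nonlocal term is now $f(-x)$, which couples the two half-lines. For $x<0$ the solution is the same: $f=x/\rho+1/\rho^2+Ae^{\rho x}$. Substituting this into the $x>0$ equation gives
\[
(\rho+x)f(x)-f'(x)=x+x\Bigl(-\frac{x}{\rho}+\frac1{\rho^2}+Ae^{-\rho x}\Bigr),
\]
which is again solved by the integrating factor, taking the solution of linear growth. Continuity at $0$ fixes $A$. The tail integral now contains $e^{-2\rho s}$, and this is where $\Omega(2\rho)$ enters \eqref{eq-sigma-B}. The Gaussian pairing is then computed as in the FECMC case.

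I expect the main obstacle to be the identification step together with the bookkeeping. On the analytic side, I need to verify that the explicit function lies in the domain of the extended generator and coincides with the resolvent; I will do this by applying Dynkin's formula to $e^{-\rho t}f(R_t)$ and using the linear growth bound to kill the boundary term. On the computational side, the resulting expressions must be massaged, via identities such as $\Omega'(\rho)=\Omega(\rho)/\rho+\rho\,\Omega(\rho)-\rho$, into exactly the stated rational forms \eqref{eq-sigma-F} and \eqref{eq-sigma-B}.
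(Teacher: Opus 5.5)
Your plan is correct and matches the paper's proof. Like you, the paper writes $\sigma^2(\rho)/8=\E[R_0f_\rho(R_0)]$, solves $(\rho-L^F)f=\id$ as a first-order ODE on each half-line with the unknown constant $\E[f(-\tau)]$ (your $A$ and $c$ are its $k^F(\rho)-1/\rho^2$ and $Nf^F_\rho$), kills the homogeneous $e^{\rho x+x^2/2}$ term by integrability at $+\infty$, and evaluates the Gaussian pairing by Fubini and $\E[e^{-\rho\tau}]=1-\Omega(\rho)$, with $1-\E[e^{-2\rho\tau}]=\Omega(2\rho)$ entering the BPS case just as you anticipate. The differences are minor: the paper quotes the BPS resolvent from \cite{Bierkens-Lunel2022} instead of re-deriving it, and it omits your Dynkin-formula identification step. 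In that step, for FECMC you should use a linear-growth bound in expectation rather than the pathwise bound $|R_t|\le|x|+t$, since the resampled value $-\tau$ can exceed $|R_{t-}|$ in absolute value.
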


\begin{remark}[Optimizing the Obtained Expression]\label{remark-optimal-sigma}
    Now that we have analytic expressions, we are able to numerically optimize $\sigma_B^2$, for example, using the algorithm of \cite{Brent1971}. This function attains its maximum value $\sigma_B^2(\rho^*)=1.838\cdots$ at $\rho^*=1.423\cdots$. This is strictly smaller than $\sigma_F^2=\sqrt{32/\pi}\approx3.19\cdots$.
    These values will be compared to the results of numerical experiments in Section~\ref{sec-Experiment}. In particular, we will observe that the ratio $\sigma^2_F/\sigma^2_B(\rho^*)\approx1.73\cdots$ is fairly robust to the assumptions and is valid under various target distributions other than the standard Gaussian distributions.
\end{remark}

\subsection{Optimal Global Refreshment Rate for FECMC}\label{subsec-optimal-FECMC}

In general, establishing the convergence of this quantity as $\rho\searrow0$ is delicate; see \cite{Komorowski+2012}. However, given the analytic formulae in Theorem~\ref{thm-formula-for-sigma}, we can conclude that the integral converges and recover the value predicted by Theorem~\ref{thm-Y}.

\begin{corollary}[Continuity at $\rho=0$]\label{cor-continuity-at-0}
    As $\rho\searrow0$, we have
    \[\sigma_F^2(\rho)\to\sigma_F^2=\sqrt{\frac{32}{\pi}},\qquad \sigma_B^2(\rho)\to0.\]
\end{corollary}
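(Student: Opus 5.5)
The plan is to work directly from the closed-form expressions \eqref{eq-sigma-F} and \eqref{eq-sigma-B} of Theorem~\ref{thm-formula-for-sigma}. The whole question then reduces to Taylor expanding the single function $\Omega$ at $\rho=0$ and tracking cancellations. Write $a\coloneq\sqrt{\pi/2}$, so that $\sqrt{8/\pi}=2/a$.

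\textbf{Step 1 (expansion of $\Omega$).} From the representation \eqref{eq-Omega}, $\Omega$ is real-analytic near $0$. Expand $e^{\rho^2/2}=1+\rho^2/2+\rho^4/8+\cdots$ and
\[\int_\rho^\infty e^{-t^2/2}\,\dt=a-\rho+\rho^3/6-\rho^5/40+\cdots.\]
Multiplying these and the prefactor $\rho$ gives
\[\Omega(\rho)=a\rho-\rho^2+\tfrac a2\rho^3-\tfrac13\rho^4+O(\rho^5).\]
For the BPS formula I will carry this expansion to order $\rho^6$; the later terms come mechanically from the same two series. In particular $\Omega(\rho)>0$ and $2-\Omega(\rho)>0$ for small $\rho>0$, so the denominators in \eqref{eq-sigma-F} and \eqref{eq-sigma-B} are nonvanishing.

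\textbf{Step 2 (FECMC).} Start from the numerator in \eqref{eq-sigma-F}. We have
\[\rho^2-a\rho+\Omega(\rho)=\tfrac a2\rho^3+O(\rho^4),\]
so its square is $\tfrac{\pi}{8}\rho^6+O(\rho^7)$. The denominator satisfies $\rho^4\Omega(\rho)(2-\Omega(\rho))=2a\rho^5+O(\rho^6)$. Hence the subtracted fraction is $\tfrac{\pi}{16a}\rho+O(\rho^2)\to0$, and therefore $\sigma_F^2(\rho)\to\sigma_F^2$.

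\textbf{Step 3 (BPS).} Since \eqref{eq-sigma-B} has the prefactor $8/\rho^4$, it suffices to show that the bracket
\[\rho-\tfrac{2}{a}\rho^2+\rho^3-\tfrac2a\,\frac{\bigl((1+\rho^2)\Omega(\rho)-\rho^2\bigr)^2}{\Omega(2\rho)}\]
is $o(\rho^4)$, i.e.\ that its coefficients of $\rho,\rho^2,\rho^3,\rho^4$ vanish.

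First I expand the inner expression:
\[(1+\rho^2)\Omega(\rho)-\rho^2=a\rho-2\rho^2+\tfrac{3a}{2}\rho^3-\tfrac43\rho^4+\cdots,\]
and square it, keeping terms through $\rho^6$. Next I write
\[\Omega(2\rho)=2a\rho\bigl(1-\tfrac2a\rho+2\rho^2-\tfrac{8}{3a}\rho^3+\cdots\bigr),\]
and invert the bracket as a geometric series through order $\rho^4$. The leading term already cancels: $\tfrac2a\cdot a^2\rho^2/(2a\rho)=\rho$. I would then verify the remaining cancellations at orders $\rho^2,\rho^3,\rho^4$ coefficient by coefficient; the identity $a^2=\pi/2$ is used repeatedly.

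\textbf{Main obstacle.} The main obstacle is this high-order bookkeeping in Step 3: four successive orders must cancel exactly, so every series must be carried far enough. To contain the algebra, I will first factor out $\rho$ everywhere and set $\Omega(\rho)=\rho\,g(\rho)$ with $g$ analytic and $g(0)=a$. The bracket then becomes $\rho\bigl[1-\tfrac2a\rho+\rho^2-\tfrac1a\,\rho\,((1+\rho^2)g(\rho)-\rho)^2/g(2\rho)\bigr]$, and it suffices to show the inner expression is $O(\rho^4)$. Equivalently, it suffices to verify the identity
\[\bigl(1-\tfrac2a\rho+\rho^2\bigr)g(2\rho)=\tfrac{\rho}{a}\bigl((1+\rho^2)g(\rho)-\rho\bigr)^2+O(\rho^4),\]
which only involves the Taylor coefficients of $g$ through order $3$ and can be checked by hand or by a computer algebra system.

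As a consistency check, I would also use the alternative route through the primitive formula \eqref{eq-sigma-B-primitive}. Since $\sigma_B^2(\rho)\ge0$, the limit must be nonnegative, and it is plausibly zero, because the Green--Kubo integral of the BPS radial momentum vanishes when $\rho=0$ (loss of ergodicity). This route, however, would not replace the expansion as a proof.
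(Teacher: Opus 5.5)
For $\sigma_F^2(\rho)$, your Step~2 is exactly the paper's argument. The paper expands the Mills ratio $M(\rho)=\Omega(\rho)/\rho$ at $0$ and notes that the numerator of the fraction in \eqref{eq-sigma-F} is of order $\rho^6$ while the denominator is of order $\rho^5$. Your coefficient $\tfrac a2$ of $\rho^2$ in $M$ is the correct one; the $\sqrt{2\pi}\,\rho^2$ printed in the paper is a typo that does not affect the order count. The paper's proof says nothing about $\sigma_B^2(\rho)\to0$, so your Step~3 supplies an argument the paper omits. The unfactored version you describe, with the expansions you display, is sound; all of those expansions are correct.

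There is an algebra error in the simplification you propose for the bookkeeping. Set $h(\rho)\coloneq(1+\rho^2)g(\rho)-\rho$. Then $(1+\rho^2)\Omega(\rho)-\rho^2=\rho\,h(\rho)$ and $\Omega(2\rho)=2\rho\,g(2\rho)$. So the last term of the bracket is $\tfrac2a\cdot\rho^2h(\rho)^2/(2\rho\,g(2\rho))=\rho\,h(\rho)^2/(a\,g(2\rho))$. After you factor out $\rho$, no further $\rho$ remains, and the identity to check is $(1-\tfrac2a\rho+\rho^2)\,g(2\rho)=\tfrac1a h(\rho)^2+O(\rho^4)$.

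As you wrote it, with $\tfrac{\rho}{a}$ on the right, the identity already fails at order $\rho^0$: the left side is $a$ and the right side is $0$. This contradicts the leading cancellation you had just observed.

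The corrected identity does hold. From $g'=\rho g-1$ you get $g(\rho)=a-\rho+\tfrac a2\rho^2-\tfrac13\rho^3+\tfrac a8\rho^4+O(\rho^5)$. Both sides then equal $a-4\rho+(3a+\tfrac4a)\rho^2-\tfrac{26}{3}\rho^3+O(\rho^4)$. The $\rho^4$ coefficients differ by $\tfrac a2$, which gives $\sigma_B^2(\rho)=4\rho+O(\rho^2)$. Proving the limit needs $\Omega$ only through order $\rho^4$, not $\rho^6$.

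Your consistency check can in fact be turned into a proof, but the mechanism is not loss of ergodicity, since $R^B$ itself is ergodic. BPS reflections preserve $\abs{R}$, so $\int_0^tR^B_s\,\ds=\bigl((R^B_t)^2-(R^B_0)^2\bigr)/2$ stays bounded in $L^2$. This forces $\int_0^\infty K^B(t,0)\,\dt=0$, provided $K^B$ is known to decay exponentially (an analogue of Lemma~\ref{lemma-exponential-decay-of-covariance} for $R^B$). Dominated convergence in \eqref{eq-sigma-B-primitive} then yields $\sigma_B^2(\rho)\to0$ without any series expansion.
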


Next, we prove that the function $\sigma_F^2$ is maximized at $\rho=0$, thereby confirming that the design goal of \cite{Michel+2020} is met. This is also evident from Figure~\ref{fig-sigma}.

\begin{corollary}[Monotonicity of $\sigma^2_F$]\label{cor-derivative-of-rho}
    \[\frac{\d\sigma_F^2(\rho)}{\d\rho}<0,\qquad\rho>0.\]
\end{corollary}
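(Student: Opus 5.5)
The plan is to work directly with the closed form \eqref{eq-sigma-F} from Theorem~\ref{thm-formula-for-sigma}. Since $\sigma_F^2>0$ is a constant, it suffices to show that
\[
g(\rho)\coloneq \frac{N(\rho)^2}{\rho^4\,\Omega(\rho)\,(2-\Omega(\rho))},\qquad N(\rho)\coloneq \rho^2-\rho\sqrt{\tfrac{\pi}{2}}+\Omega(\rho),
\]
is strictly increasing on $(0,\infty)$. I would first rewrite everything through the Mills ratio $m(\rho)\coloneq e^{\rho^2/2}\int_\rho^\infty e^{-t^2/2}\,\dt$, so that $\Omega=\rho m$. The advantage is that $m$ satisfies the first-order ODE $m'(\rho)=\rho m(\rho)-1$. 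With this, $g'$ becomes a rational function of the two quantities $\rho$ and $m$ only. I would also record the elementary facts $0<\Omega<1$, which give $2-\Omega>1$, and $N(\rho)>0$ for $\rho>0$; the latter should follow from a lower bound on $m$ together with $m(0)=\sqrt{\pi/2}$.

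Next, I would differentiate $\log g=2\log N-4\log\rho-\log\Omega-\log(2-\Omega)$ and use $\Omega'=m+\rho m'=m(1+\rho^2)-\rho$. This reduces $g'>0$ to the positivity of an explicit polynomial $P(\rho,m)$, after clearing the positive denominators $\rho\,N\,\Omega\,(2-\Omega)$. For fixed $\rho$ I expect $P$ to be low degree (quadratic or cubic) in $m$. The aim is then to show $P(\rho,m(\rho))>0$ by inserting classical two-sided rational bounds on the Mills ratio, such as
\[
\frac{\rho}{\rho^2+1}<m<\frac{\rho^2+2}{\rho^3+3\rho},
\]
or the sharper Sampford/Birnbaum-type bounds $\frac{4}{\rho+\sqrt{\rho^2+8}}<m<\frac{2}{\rho+\sqrt{\rho^2+4}}$. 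For each $\rho$, $m$ then lies in an interval, and I would check that $P$ is positive on that interval, using monotonicity or convexity in $m$.

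I would treat the two ends separately. For small $\rho$, I would Taylor expand $\Omega(\rho)=\sqrt{\pi/2}\,\rho-\rho^2+\sqrt{\pi/2}\,\rho^3-\cdots$. Here $N$ vanishes to high order, which is the same cancellation behind Corollary~\ref{cor-continuity-at-0}, so I would extract the leading-order term of $g'$ and show that it is positive, controlling the remainder on some interval $(0,\rho_0]$. For large $\rho$, I would use the asymptotic expansion $m\sim \rho^{-1}-\rho^{-3}+3\rho^{-5}$ with rigorous alternating-series error bounds to get the sign on $[\rho_1,\infty)$.

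The main obstacle is the intermediate range $[\rho_0,\rho_1]$. There the simple Mills bounds may not be tight enough, because $P$ involves a near-cancellation inherited from $N$. If the rational bounds fail there, I would fall back on a finite partition of the interval. On each subinterval I would combine monotonicity of $m$ with Lipschitz bounds on $P$, which amounts to a certified interval-arithmetic check. This still yields a complete proof, since the small- and large-$\rho$ regimes have already been handled analytically.
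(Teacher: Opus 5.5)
Your strategy works in outline. It certifies the final inequality differently from the paper, and three of its concrete inputs need correcting.

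\textbf{Comparison with the paper.} Write $E$ for the numerator in $(\log g)'=2E/(\rho N\Omega(2-\Omega))$, so your $P$ is a positive multiple of $E$. Using $m'=\rho m-1$ one gets the exact identity
\[
E=\Omega\,q+N\rho^2(1-\Omega)^2,\qquad q\coloneq(2-\rho^2)\Omega^2+\bigl(4\rho^2-\sqrt{2\pi}\,\rho-3\bigr)\Omega+3\sqrt{\tfrac{\pi}{2}}\,\rho-3\rho^2.
\]
The paper discards the second term using $N>0$. In its displayed derivative the middle term should carry the factor $N$ rather than $\rho^2-\rho\sqrt{\pi/2}$, and only then is discarding it justified. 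It then reduces to $q>0$, inserts Birnbaum's lower bound $\Omega>2\rho/(\rho+\sqrt{\rho^2+4})$ in one shot, and closes with a discriminant.

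That argument is short and purely analytic. However, a one-sided substitution is legitimate only where $q$ is increasing in $\Omega$, and for small $\rho$ it is not. As $\rho\to0$ one has $\partial_\Omega q\to-3$, while $q(\rho,\Omega(\rho))=\tfrac12\sqrt{\pi/2}\,\rho^3+O(\rho^4)$ after cancellations at orders $\rho$ and $\rho^2$. At $\rho=0.1$, for instance, $q\approx5.4\times10^{-4}$, but $q$ evaluated at Birnbaum's bound is $\approx5.9\times10^{-2}$. So your insistence on positivity over a two-sided bracket for $m$, with a separate expansion near $0$, is what this regime actually requires. The cost is length and a computer-assisted middle range.

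\textbf{Inputs to correct.} The first error would reverse the outcome of your small-$\rho$ step.

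(i) Since $m''(0)=m(0)=\sqrt{\pi/2}$, the expansion is $\Omega(\rho)=\sqrt{\pi/2}\,\rho-\rho^2+\tfrac12\sqrt{\pi/2}\,\rho^3-\tfrac13\rho^4+\cdots$; the cubic coefficient is not $\sqrt{\pi/2}$. The sign of $g'$ near $0$ is decided exactly at this order. The correct coefficient gives $E=\tfrac{\pi}{4}\rho^4+O(\rho^5)$, whereas yours would give $E\approx-\tfrac{\pi}{2}\rho^4$.

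(ii) Your ``sharper'' pair is wrong: at $\rho=0$ it reads $\sqrt2<m(0)<1$. The Birnbaum--Sampford bounds are $\frac{2}{\rho+\sqrt{\rho^2+4}}<m<\frac{4}{3\rho+\sqrt{\rho^2+8}}$.

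(iii) The lower bounds you list do not give $N>0$ near $0$; Birnbaum's yields only $m(0)\ge1<\sqrt{\pi/2}$. Use instead $(m+\rho)'=\rho m>0$, so $m(\rho)+\rho>\sqrt{\pi/2}$ for $\rho>0$.

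\textbf{A cleaner small-$\rho$ step.} By the identity above it suffices to prove the quadratic inequality $q>0$. Near $0$, where $q$ decreases in $\Omega$, you need an \emph{upper} bound on $m$ with error $o(\rho^2)$. The condition $m'''=(\rho^3+3\rho)m-(\rho^2+2)<0$ is equivalent to your upper convergent, so $m<\sqrt{\pi/2}-\rho+\tfrac12\sqrt{\pi/2}\,\rho^2$. Substituting this into $q$ gives an explicit polynomial lower bound $\rho^3\bigl(\tfrac12\sqrt{\pi/2}-2\rho+O(\rho^2)\bigr)$, which stays positive on $(0,1/2]$.
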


\subsection{A Fast Proxy for Asymptotic Variance Estimation in High Dimensions}\label{subsec-fast-proxy}

In Section~\ref{sec-ScalingAnalysis}, we showed that the potential process $Y^d$ admits a non-degenerate limit only after scaling time by a factor $d$, whereas the radial momentum process $R^d$ has a stable limit on the original timescale. In other words, the mixing of the potential becomes increasingly slow as $d$ grows, while $R^d$ does not. This separation of timescales is also reflected in the efficiency of asymptotic variance estimation for ergodic averages.

We consider two test functions that correspond to $Y^d$ and $R^d$, namely the scaled negative log-density and its time derivative:
\begin{equation}\label{eq-h-g}
    h(x)\coloneq \frac{U(x)-\E_\pi[U(X)]}{\sqrt{\Var_\pi[U(X)]}},\qquad g(x,v)\coloneq \frac{(v|\nabla U(x))-\E_{\pi\otimes\mu}[(V|\nabla U(X))]}{\sqrt{\Var_{\pi\otimes\mu}[(V|\nabla U(X))]}}.
\end{equation}
By construction, $h$ and $g$ have zero mean and unit variance under the stationary distribution.
Consequently, it suffices to study the estimation of $\E_\pi[h(X)]$ and $\E_{\pi\otimes\mu}[g(X,V)]$, both of which are zero.
For the standard Gaussian target, each function simplifies to:
\[h(x)=\frac{1}{\sqrt{2}}\frac{|x|^2-d}{\sqrt{d}},\qquad g(x,v)=(x|v).\]
In Section~\ref{sec-Experiment}, we estimate their means using the ergodic averages
\begin{equation}\label{eq-hath-hatg}
\wh{h}_T^d\coloneq \frac{1}{T}\int^T_0h(X_s^d)\,\d s,\qquad \wh{g}_T^d\coloneq \frac{1}{T}\int^T_0g(X_s^d,V_s^d)\,\d s.
\end{equation}
The following proposition provides the values of the asymptotic variances of $\wh{h}_T^d$ and $\wh{g}_T^d$, respectively. The proof is given in Appendix~\ref{sec-appendix-D}.
\begin{proposition}[Asymptotic Variance Formulae]\label{prop-asymptotic-MSE}
    Under Assumption~\ref{assumption-FECMC},
    \[\lim_{T\to\infty}\lim_{d\to\infty}T\Var[\wh{h}^d_{dT}]=\frac{8}{\sigma^2_F},\qquad\lim_{T\to\infty}\lim_{d\to\infty}T\Var[\wh{g}^d_{T}]=\frac{\sigma^2_F}{4}.\]
    Note that, for $h^d$, we evaluate the time average over a horizon of length $dT$ to produce non-degenerate variance, matching the diffusive time scaling of $Y^d$ in Section~\ref{sec-ScalingAnalysis}.
\end{proposition}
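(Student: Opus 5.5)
The plan is to reduce both limits to statements about the limiting processes and then apply Green--Kubo type identities.

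\emph{First limit.} Write $\wh{h}^d_{dT}=\frac{1}{dT}\int_0^{dT}h(X^d_s)\,\d s$ and substitute $s=du$. Since $h(X^d_{du})=Y^d_u/\sqrt{2}$, this gives
\[
\wh{h}^d_{dT}=\frac{1}{\sqrt{2}\,T}\int_0^TY^d_u\,\du .
\]
Hence $T\Var[\wh{h}^d_{dT}]=\frac{1}{2T}\E[(\int_0^TY^d_u\,\du)^2]$.

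By Theorem~\ref{thm-Y}, $Y^d\Rightarrow Y$ in the Skorokhod space. The limit $Y$ is continuous, so the map $y\mapsto\int_0^Ty_u\,\du$ is continuous at almost every path of $Y$. To pass the second moment through the limit, I need uniform integrability of $(\int_0^TY^d_u\,\du)^2$. Jensen's inequality together with stationarity gives
\[
\E\Bigl[\Bigl(\int_0^TY^d\Bigr)^4\Bigr]\le T^4\,\E[(Y^d_0)^4].
\]
Here $Y^d_0=(\chi^2_d-d)/\sqrt d$, whose fourth moment is bounded uniformly in $d$. This yields the uniform integrability.

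Under stationarity the limit $Y$ is an OU process with drift $\theta=\sigma_F^2/4$ and stationary variance $\sigma_F^2/(2\theta)=2$. Its covariance is therefore $\E[Y_0Y_t]=2e^{-\theta t}$. Consequently,
\[
\frac{1}{2T}\E\Bigl[\Bigl(\int_0^TY\Bigr)^2\Bigr]=\frac{1}{T}\int_0^T\!\!\int_0^T e^{-\theta|s-t|}\,\ds\,\dt\;\longrightarrow\;\frac{2}{\theta}=\frac{8}{\sigma_F^2}\quad\text{as }T\to\infty.
\]

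\emph{Second limit.} Here $g(X^d_s,V^d_s)=R^d_s$, so $T\Var[\wh g^d_T]=\frac1T\int_0^T\!\int_0^T\E[R^d_sR^d_t]\,\ds\,\dt$. By Proposition~\ref{prop-R}, $R^d\Rightarrow R^F$. The bound $|R^d|\le|X^d|$ alone is not uniform in $d$. However, $\E[(R^d_t)^4]=\E[(X|V)^4]$ is bounded uniformly in $d$, since $(X|V)\sim N(0,1)$ exactly under stationarity. The same Jensen argument therefore gives uniform integrability, and the $d\to\infty$ limit equals
\[
\frac{1}{T}\E\Bigl[\Bigl(\int_0^TR^F\Bigr)^2\Bigr]=\frac{2}{T}\int_0^T(T-t)\,K^F(t,0)\,\dt .
\]
One caveat: weak convergence in the Skorokhod space only gives convergence of the integral functional because $R^F$ has jumps at countably many times, and integration is continuous in the $J_1$ topology anyway.

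As $T\to\infty$, this tends to $2\int_0^\infty K^F(t,0)\,\dt$, provided $K^F$ is integrable. Integrability follows from Proposition~\ref{prop-exponential-ergodicity}. Indeed $K^F(t,0)=\E[R_0(P^t\id)(R_0)]$, and exponential ergodicity with $V\in L^1(\gamma)$ gives geometric decay of $K^F$. This decay step needs a weighted ($V$-norm) version of the bound, or truncation of $\id$ together with Cauchy--Schwarz, since $\id$ is unbounded.

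Finally, $\int_0^\infty K^F(t,0)\,\dt=\lim_{\rho\searrow0}\int_0^\infty e^{-\rho t}K^F(t,0)\,\dt$ by dominated convergence, and this equals $\lim_{\rho\searrow0}\sigma_F^2(\rho)/8=\sigma_F^2/8$ by \eqref{eq-sigma-F-primitive} and Corollary~\ref{cor-continuity-at-0}. Multiplying by $2$ gives $\sigma_F^2/4$.

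\emph{Main obstacle.} The hardest step is controlling the decay of $K^F$ from the total variation bound with an unbounded test function. If that fails, I would instead show directly that the Poisson equation $-L^Ff=\id$ has a solution in $L^2(\gamma)$, as done in the resolvent analysis behind Theorem~\ref{thm-formula-for-sigma}. I would then use the martingale decomposition $\int_0^TR^F=f(R_0)-f(R_T)+M_T$, which yields the limit $2\E[R_0f(R_0)]$ directly.
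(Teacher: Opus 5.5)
Your proof is correct and has the same skeleton as the paper's. The $h$-limit comes from the covariance $2e^{-\sigma_F^2|t-s|/4}$ of the stationary OU limit, and the $g$-limit from $2\int_0^\infty K^F(t,0)\,\dt$ together with Corollary~\ref{cor-continuity-at-0}.

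It differs in two technical places.

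\emph{Exchanging $d\to\infty$ with the variance.} The paper handles this step only by postulating Assumption~\ref{assumption}, which identifies $\wh{h}^d_{dT}$ and $\wh{g}^d_T$ with integrals of the limiting processes. It then asserts, via a split into a limit term plus an error term, that the result carries over. That tacitly needs the error to vanish in $L^2$, whereas the continuous mapping theorem gives only convergence in law. Your uniform integrability bound closes exactly this step. It uses Jensen, stationarity, and the exact moments $\E[(Y^d_0)^4]=12+48/d$ and $(X^d_0|V^d_0)\sim N(0,1)$. On this point your argument is the more complete one.

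\emph{Decay of $K^F$.} The paper passes from the total-variation bound of Proposition~\ref{prop-exponential-ergodicity} to exponentially decaying $\alpha$-mixing coefficients of the stationary process; this is where $V\in L^1(\gamma)$ enters. It then applies Davydov's inequality with $\|R^F_0\|_4$, which handles the unbounded test function in one line. Your truncation and Cauchy--Schwarz route also works. However, if you rely only on $V\in L^1(\gamma)$, you must truncate $R_0$ as well, because $\E_\gamma[|x|V(x)]<\infty$ is not implied in general. It does hold for the explicit Lyapunov function in the paper's proof. That function also satisfies $|x|\le V(x)$, so a $V$-weighted Harris bound would give the decay directly.

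Consequently your Poisson-equation fallback is not needed. The paper's extra $O(T^{-2})$ remainder estimates are not needed for the stated double limit either.
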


Proposition~\ref{prop-asymptotic-MSE} suggests a fast proxy for the asymptotic variance estimation of $\wh{h}^d$. A common approach is to apply a batch means (BM) estimator constructed from time averages of $h$ over successive short time intervals called batches; see \cite{Flegal-Jones2010,Liu+2022} and also \cite{Bierkens+2019} for application to PDMPs. Since the correlation time of $h(X_t^d)$ grows on the $O(d)$ timescale, stable BM estimation for $h$ requires the batch size to scale at least on the order of $d$; see the $dT$ time horizon for $\wh{h}^d$ in Proposition~\ref{prop-asymptotic-MSE}. In contrast, the same proposition indicates that $\sigma_F^2$ can also be inferred from $\wh{g}^d$, which uses only a time horizon on the order of $O(1)$. In this way, one may estimate $\sigma_F^2$, and thus the asymptotic variance for $h$, using roughly $1/d$ of the computational budget. This reciprocal relationship is derived from the property of the OU limit and is observed to persist beyond the standard Gaussian target. We empirically demonstrate the effectiveness of this approach for different targets in Section~\ref{subsec-experiment-BM}.

This BM estimator via the fast proxy $g$ equals the unbiased sample variance of the energy increments over batches, i.e., $U^d(X^d_{t_i})-U^d(X^d_{t_{i-1}})$. This value is computationally cheaper than the standard BM estimator for $h$, since computing $\wh{h}_T^d$ generally requires integrating $h(X_t^d)$ along the trajectory via numerical quadrature. However, there is a pitfall in applying our fast proxy trick. Note that changing the order of the limits yields
\[\lim_{d\to\infty}\lim_{T\to\infty}T\Var[\wh{g}^d_T]=0.\]
This reflects the fact that the asymptotic variance of the ergodic average $\hat{g}^d_T$ is zero, an immediate fact that follows from the telescoping property. Therefore, our proposed estimator differs fundamentally from the usual application of a BM-type estimator, which would yield a trivial estimator. We will demonstrate that an appropriate choice of batch size will indeed lead to an efficient estimator for $\sigma^2_F$ in Section~\ref{subsec-experiment-BM}. The appropriate batch size appears to be much smaller than usual practice \citep{Liu+2022}; see Section~\ref{subsec-experiment-BM}. Identifying appropriate regimes in which our estimator has consistency, as well as a systematic treatment beyond the standard Gaussian targets, will be a practically important future direction.

Our estimator is closely related in spirit to Einstein--Helfand-type estimators for transport coefficients in molecular dynamics (MD) \citep{Helfand1960,Viscardy+2007}. 
Asymptotic variance estimation of ergodic averages is also very important in Bayesian inference, especially in MCMC output analysis and the development of adaptive versions of the algorithm. In the former, asymptotic variance estimators are utilized to assess convergence and sample quality \citep{Jones+2006,Flegal-Gong2015,Gong-Flegal2016}, report Monte Carlo standard errors (MCSEs) \citep{Flegal+2008}, and construct confidence regions for posterior estimates \citep{Atchade2016,Robertson+2021}; see \cite{Vats+2020,Roy2020} for further details. In the latter, the estimator of the asymptotic variance (or proxies thereof) is optimized internally to tune algorithmic parameters on the fly \citep{Andrieu-Robert2001,Pasarica-Gelman2010,Wang+2025}.

\section{Experiment}\label{sec-Experiment}

In this section, we validate our theory by first probing dimensionality scaling in Section~\ref{subsec-experiment-d} and then checking robustness to the Gaussian assumption in Section~\ref{subsec-experiment-gamma}. We mainly consider the mean estimation of the function $h$, as well as $g$ in Section~\ref{subsec-experiment-BM}. Both of them are defined in \eqref{eq-h-g}. While our theory has focused on the diffusion coefficients $\sigma_F,\sigma_B$, we mainly report the effective sample size (ESS) in this section for the sake of interpretability. We define the ESS for $h$ as $\ESS(h)={T}/{\varsigma^2_h}$, where $T$ is the time horizon and $\varsigma^2_h$ is the asymptotic variance of the ergodic average $\wh{h}_T$, which is defined in \eqref{eq-hath-hatg}. Recall that the variance of $h$ under stationarity is $1$ by construction.

Both samplers are initialized in stationarity. The refreshment rate $\rho$ of BPS is set to 1.42, the asymptotically optimal choice; see Remark~\ref{remark-optimal-sigma} and \cite{Bierkens+2022}. For FECMC, an orthogonal switch is performed approximately every fifty events (i.e., $p=0.05$), following the practice in \cite{Michel+2020}. Both algorithms share the same implementation that uses the automated Poisson thinning technique of \cite{Andral-Kamatani2024}. We implement only the direction change step separately, keeping the rest of the code identical for a fair comparison. The code is available in the Supplementary Material \citep{Shiba2026Supplement}.


\subsection{Dimensionality Scaling: FECMC vs. BPS}\label{subsec-experiment-d}

We vary the dimension from $d=10$ to $d=320$ and compare the values predicted by our theory with experimental values. We consider the same test function $h$ defined in \eqref{eq-h-g}. When both algorithms are run over the time interval $[0,dT]$, Proposition~\ref{prop-asymptotic-MSE} predicts that the ESSs should be independent of $d$, and for the standard Gaussian targets, the values should be
\begin{equation}\label{eq-ESS}
\ESS_{\text{FECMC}}(h)=\frac{T\sigma^2_F}{8}=\frac{T}{\sqrt{2\pi}}\approx 39.8,\qquad \ESS_{\text{BPS}}(h)=\frac{T\sigma^2_B(\rho^*)}{8}\approx22.9,
\end{equation}
respectively; see Remark~\ref{remark-optimal-sigma}. We set $T=100$. We construct an estimator for the mean squared errors (MSEs) of $h$ by averaging the squared errors of the empirical averages over $R=1000$ independent runs:
\[\wh{\MSE}_T\coloneq \frac{1}{R}\sum_{r=1}^R\paren{\frac{1}{dT}\int^{dT}_0h(X^{r}_t)\,\dt}^2,\qquad h(x)=\frac{U(x)-\Var_\pi[U]}{\sqrt{\Var_\pi[U]}}.\]
Finally, we report the estimated ESS calculated by $\wh{\ESS}_T=1/{\wh{\MSE}_T}$, together with its associated BCa bootstrap confidence interval.

We consider three different targets $\pi_1,\pi_2,\pi_3$: (1) the standard Gaussian $\pi_1$, (2) the multivariate standard logistic distribution $\pi_2(x)=\prod_{i=1}^d\frac{e^{x_i}}{(1+e^{x_i})^2}$, and (3) an anisotropic Gaussian $\pi_3\propto\exp(-x^\top\Sigma^{-1}x/2)$, where $\Sigma_{ii}=1$ and $\Sigma_{ij}=\gamma$ for $i\ne j$. For the third target, we set $\gamma=1/2$ in this section; other choices are explored in the next subsection.

First, for the standard Gaussian target $\pi_1$, we confirm that the ESSs of both FECMC and BPS are independent of the dimension $d$ and closely match the theoretical values, with all the 95\% confidence intervals covering the theoretical values; see the left panel of Figure~\ref{fig-Exp1-1}. In particular, our theory provides a very good approximation even in moderate dimensions, such as $d=10$ and $20$. When computational cost is taken into account, the difference in performance becomes more pronounced; see the right panel of Figure~\ref{fig-Exp1-1}. This is because FECMC requires fewer event simulations; see Remark~\ref{remark-jump-frequency}. The estimated ESS per CPU second amounts to approximately a fifteen-fold difference. The inverse linear dependence on dimensionality reflects the $O(d)$ scaling of the time horizon.

For the second experiment, the multivariate logistic distribution $\pi_2$ presents two additional challenges: heavier tails and a lack of spherical symmetry, while remaining isotropic across the coordinates. We observe that these departures from isotropic Gaussianity do not substantially alter the overall behavior, except for a reduction in the ESSs for both FECMC and BPS; see the left panel of Figure~\ref{fig-Exp1-2}. However, although the absolute values of ESS have changed, the ESS ratio between FECMC and BPS still remains close to the theoretical value $\sigma^2_F/\sigma^2_B(\rho^*)\approx1.73\cdots$, which is derived under the standard Gaussian assumption; see Remark~\ref{remark-optimal-sigma}

In the third experiment, the samplers are confronted with strong anisotropy induced by correlations among the coordinates. Once again, we observe behavior similar to that of the standard Gaussian case; see the right panel of Figure~\ref{fig-Exp1-2}. However, there are some notable deviations. First, the ESSs are slightly increased, which may be explained by a reduction in the intrinsic dimension caused by the correlation structure. Second, the ESS ratio is larger than that of the isotropic case. In the next section, we will observe that this relative efficiency gain widens as the level of anisotropy increases. We will discuss possible explanations there.

Overall, these experiments confirm the relevance of our high-dimensional analysis to even moderate dimensions and support the robustness of both the theoretical scaling predictions and the efficiency ratio prediction $\sigma^2_F/\sigma^2_B(\rho^*)\approx1.73\cdots$ across a range of targets with increasing structural complexity.

\begin{figure}[htbp]\centering
    \includegraphics[width=0.45\textwidth]{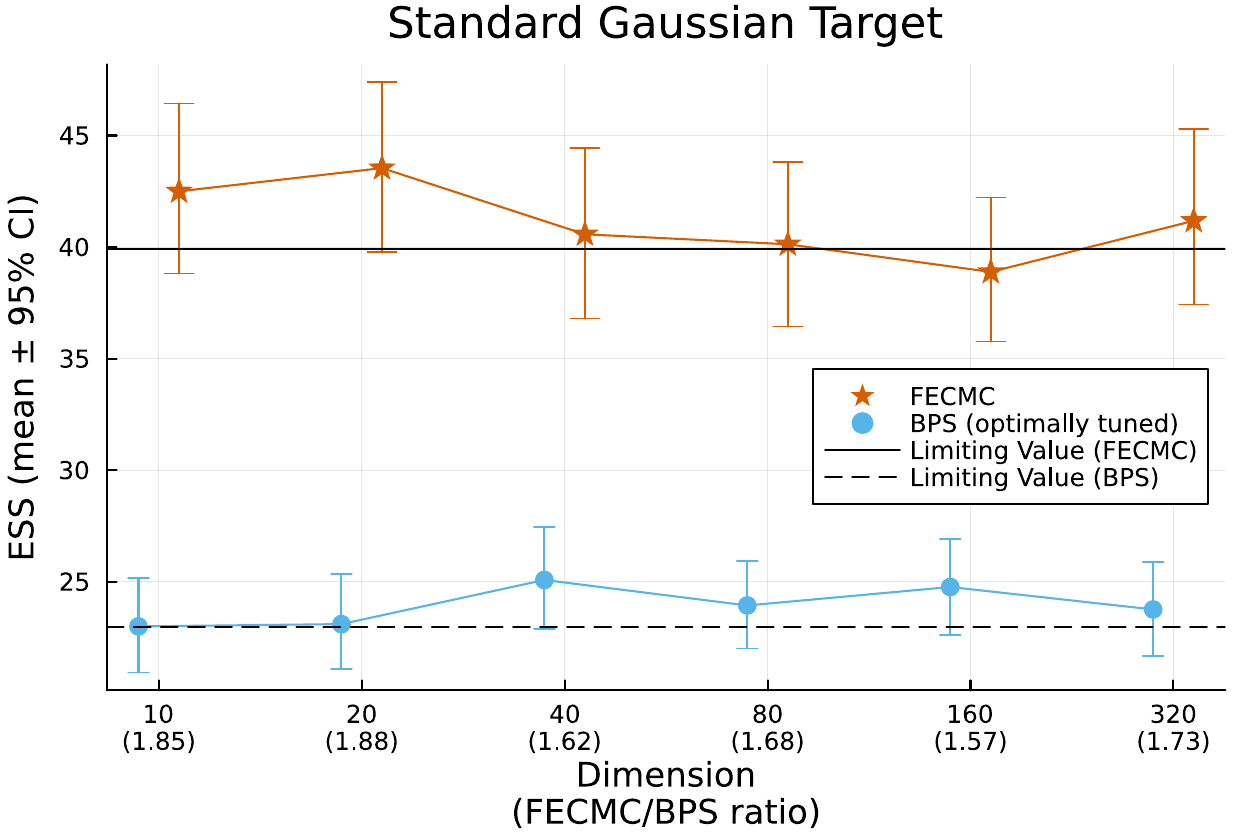}
    \includegraphics[width=0.45\textwidth]{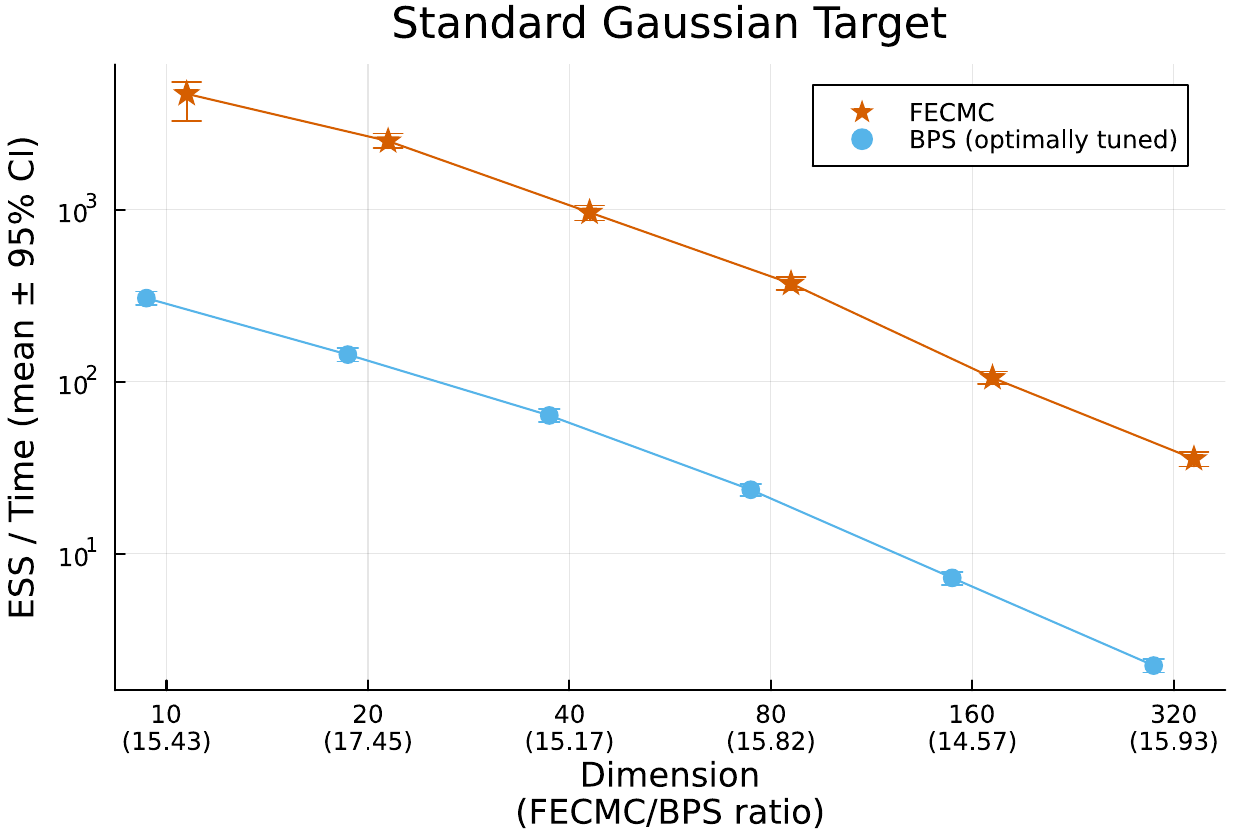}
    \caption{
    Estimated ESS (Left) and ESS per CPU second (Right), together with 95\% BCa bootstrap confidence intervals, against dimensionality.
    The parenthesized values below the $x$-ticks represent the ESS mean ratio of FECMC to BPS at each dimension.
    The estimator is given by $\wh{\ESS}_T=1/{\wh{\MSE}_T}$ for $T=100$.
    Both plots are based on 1000 independent runs of BPS and FECMC, targeting the standard Gaussian distribution $\pi_1(x)\propto\exp(-\abs{x}^2/2)$.
    The two black lines in the left plot represent the theoretical limiting values \eqref{eq-ESS} when $d,T\to\infty$ for FECMC and BPS respectively.
    }\label{fig-Exp1-1}
\end{figure}

\begin{figure}[htbp]\centering
    \includegraphics[width=0.45\textwidth]{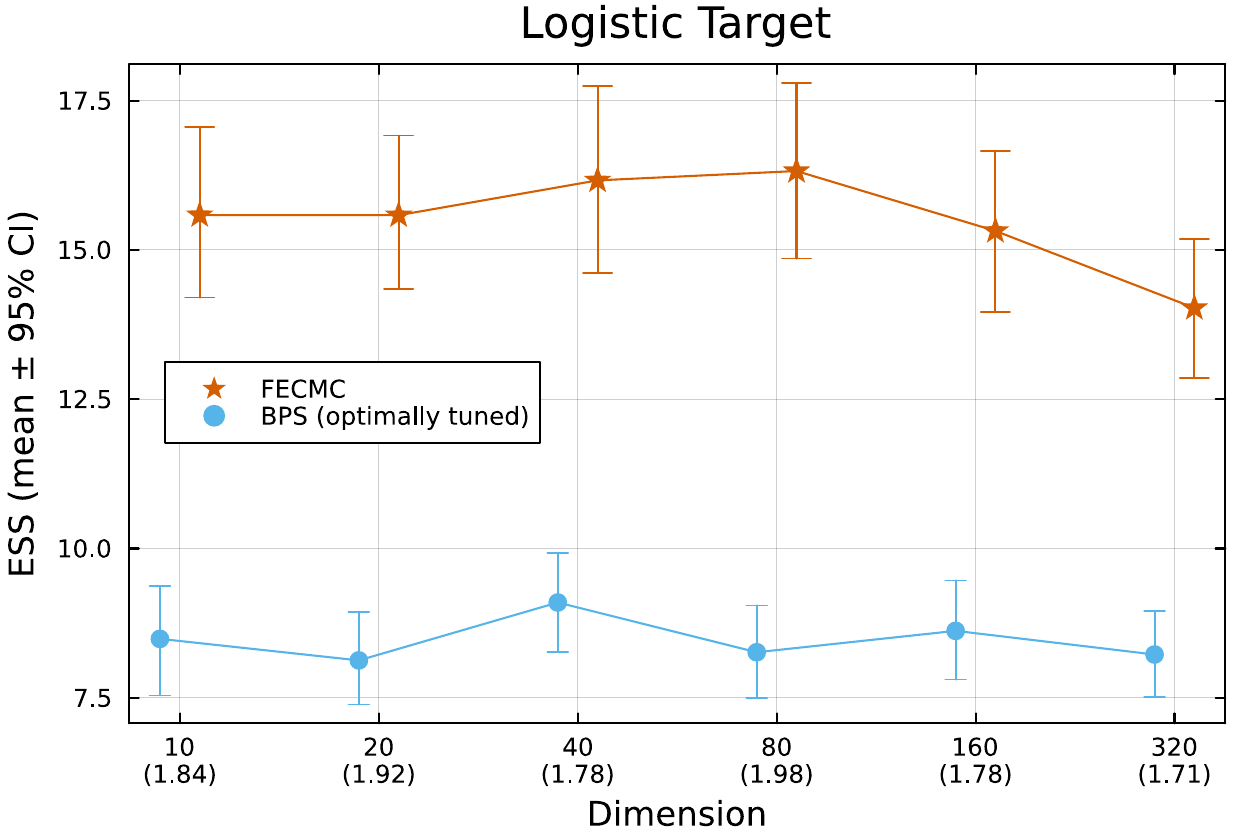}
    \includegraphics[width=0.45\textwidth]{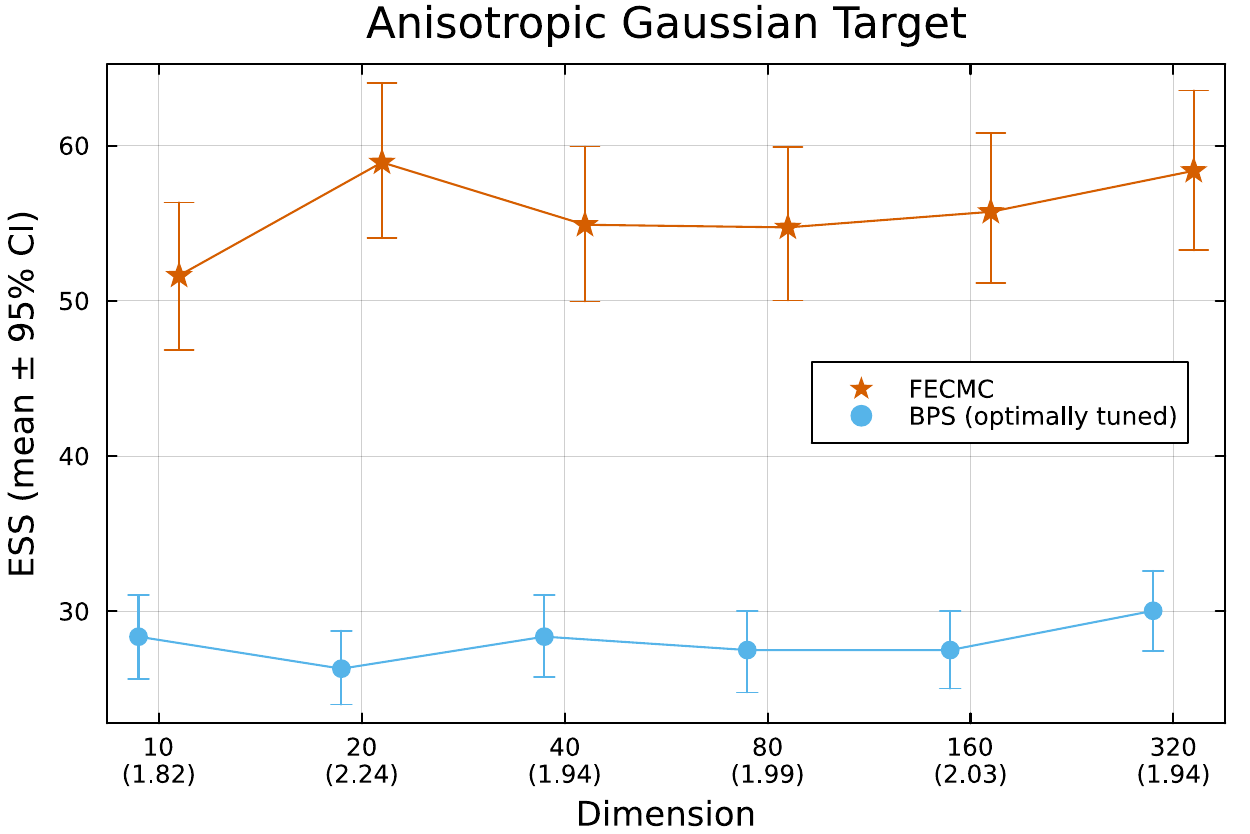}
    \caption{
    Estimated ESS against dimensionality, together with 95\% BCa bootstrap confidence intervals.
    The target distribution is either the i.i.d. logistic distribution $\pi_2(x)=\prod_{i=1}^d\frac{e^{x_i}}{(1+e^{x_i})^2}$ (left plot) or the anisotropic Gaussian distribution $\pi_3\propto\exp(-x^\top\Sigma^{-1}x/2)$, where $\Sigma_{ii}=1$ and $\Sigma_{ij}=0.5$ for $i\ne j$ (right plot). In both cases, each algorithm is run 1000 times independently with the time horizon $T=100$.
    }\label{fig-Exp1-2}
\end{figure}

\subsection{Robustness to Deviations from Isotropic Gaussianity}\label{subsec-experiment-gamma}

Fixing $d=100$, we perform two experiments that introduce progressively stronger deviations from isotropy and Gaussianity, respectively. In the first experiment, we revisit the anisotropic Gaussian setting $\pi_3(x)\propto\exp(-x^\top\Sigma^{-1}x/2)$ and vary the off-diagonal entries of the covariance matrix: $\Sigma_{ij}=\gamma\in\{0,0.1,0.2,\cdots,0.9\}$ for $i\ne j$. In the second experiment, we consider the spherically symmetric Student distribution $\pi_4(x)\propto(1+\abs{x}^2/\nu)^{-(d+\nu)/2}$ with varying degrees of freedom $\nu\in\{10,10^2,10^3,10^4\}$; see \cite{Fang+1990} for details on this distribution. Although the Student distribution has polynomial tails for any finite $\nu\ge1$, it converges to the standard Gaussian distribution as $\nu\to\infty$, thereby capturing varying degrees of tail heaviness. We will call the value of $\nu^{-1}$ tail heaviness.

The results are summarized in Figure~\ref{fig-Exp2}. The left edge in both plots corresponds to the Gaussian, or approximately Gaussian, target, where our theoretical predictions (indicated by dotted black lines) are within the 95\% confidence intervals. However, we also see that both experiments probe a broad spectrum of deviations from the standard Gaussian setting, including regimes in which the theoretical assumptions are substantially violated on the right edge. While a clear departure from the theoretical predictions is observed at extreme parameter values, the theory remains remarkably robust for moderate deviations, with both the ESS and the ESS ratio closely matching the predicted values. Interestingly, in the anisotropic Gaussian experiment (left panel of Figure~\ref{fig-Exp2}), when the correlation is very strong, such as $\gamma=0.9$, FECMC significantly outperforms BPS, amounting to a 3.6-fold gap in ESS. This phenomenon may warrant further investigation, particularly under alternative scaling regimes, such as those considered in \cite{Beskos+2018,Au+2023}. We hypothesize that this phenomenon is due to the irreversible nature of the fluid limit, in the sense of \cite{Fort+2008,Agrawal+2025Fluid}, of the one-dimensional linear projection FECMC process, which cannot be observed for BPS \citep{Bierkens+2022}.

\begin{figure}[htbp]\centering
    \includegraphics[width=0.45\textwidth]{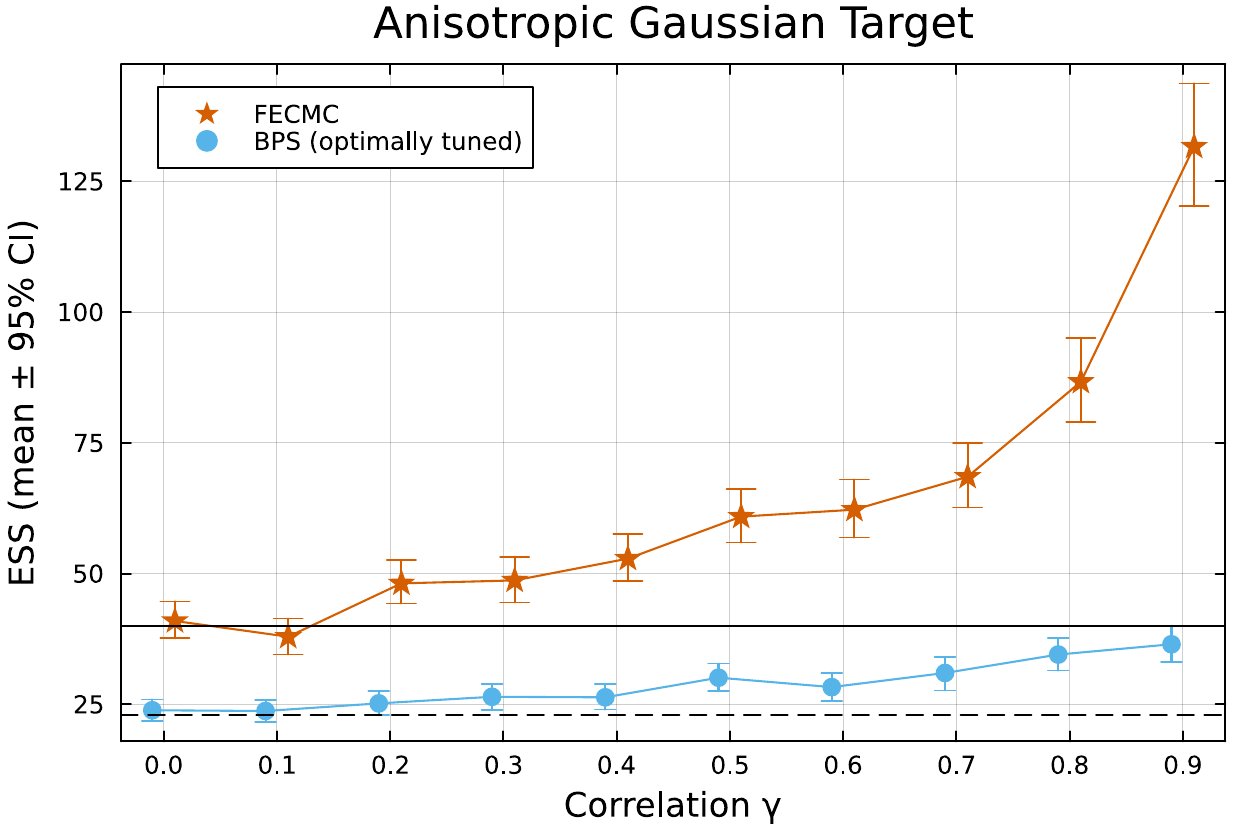}
    \includegraphics[width=0.45\textwidth]{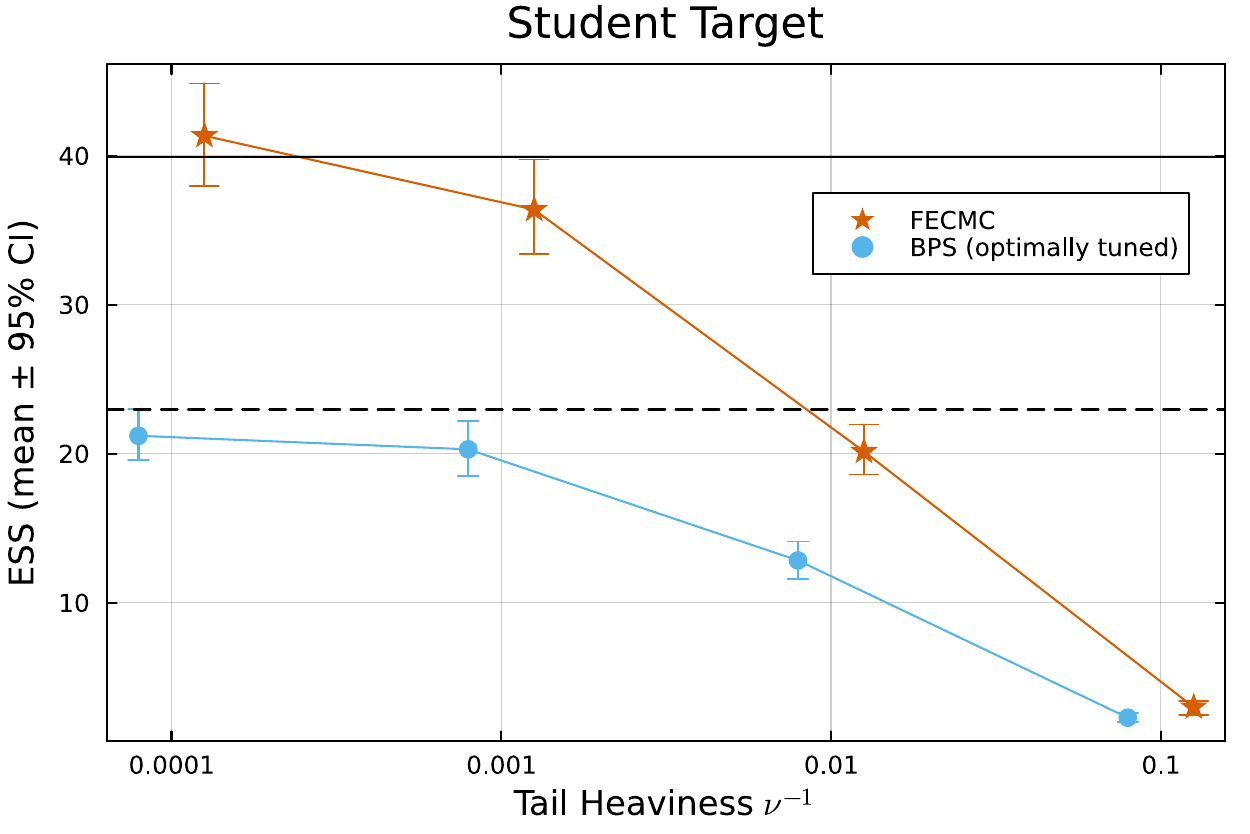}
    \caption{Estimated ESS against deviation parameters $\gamma,\nu^{-1}$, which quantify departures from isotropy and Gaussianity, together with 95\% BCa bootstrap confidence intervals.
    The target distribution is either the anisotropic Gaussian $\pi_3(x)\propto\exp(-x^\top\Sigma^{-1}x/2)$ with varying correlations $\Sigma_{ij}=\gamma\in\{0,0.1,0.2,\cdots,0.9\}$ (left plot) or the spherically symmetric Student distribution $\pi_4(x)\propto(1+\abs{x}^2/\nu)^{-(d+\nu)/2}$ with varying degrees of freedom $\nu\in\{10,10^2,10^3,10^4\}$ (right plot).
    The black lines denote theoretical limiting values derived under the standard Gaussian assumption in Eq.~\eqref{eq-ESS}
    In both cases, the dimension is $d=100$ and each algorithm is run 1000 times independently with the time horizon $T=100$.
    }\label{fig-Exp2}
\end{figure}

\subsection{A Fast Proxy for Asymptotic Variance Estimation}\label{subsec-experiment-BM}

In practice, the asymptotic variance of ergodic averages is estimated from a single trajectory, as it can be computationally expensive to run the algorithm several times independently. This estimate can also be aggregated among multiple chains to diagnose convergence \citep{Vats-Knudson2021}. A common choice for this task is the batch means (BM) estimator. For the test function $h$ and computational horizon $T$, it is given by:
\[\wh{\varsigma^2_h}=\frac{1}{B-1}\sum_{i=1}^{B}(Y_i-\ov{Y})^2,\qquad Y_i\coloneq \frac{1}{\sqrt{b}}\int^{ib}_{(i-1)b}h(X_t)\,\dt,\]
where $\ov{Y}$ is the sample mean of $\{Y_i\}_{i=1}^B$, and $b=T/B$ is the batch size; see also Section~2 in the supplement of \cite{Bierkens+2019}. This estimator is consistent as $T\to\infty$ under appropriate conditions \citep{Flegal-Jones2010}. However, this estimator has a notoriously severe bias-variance trade-off within a finite computational budget; therefore, variance reduction methods are actively pursued \citep{Flegal-Jones2010,Vats-Flegal2021,Liu+2022}.

In Section~\ref{subsec-fast-proxy}, we proposed a fast proxy trick: the time derivative of $h$ can yield a more efficient estimator for the asymptotic variance of $\wh{h}$. In this section, we empirically demonstrate that the BM estimator associated with $g$, denoted by $\wh{\varsigma^2_g}$, can be more efficient in terms of MSE than $\wh{\varsigma^2_h}$ in high dimensions. We consider two target distributions from Section~\ref{subsec-experiment-d}; the standard Gaussian distribution $\pi_1$ and the anisotropic Gaussian distribution $\pi_3$ with $\gamma=1/2$. Both BM estimators are constructed over the time interval $[0,Td]$, where $T=10^4$. For the batch size of $\wh{\varsigma^2_h}$, we employ the asymptotically optimal choice in terms of MSE, which is derived explicitly in \cite{Liu+2022}. For $\wh{\varsigma^2_g}$, we set $b/(10^4\sqrt{d})=1.5,2$, respectively. This choice can be suboptimal; optimal batch size selection for our proposed estimator will be an important topic for future research. We report the values of
\begin{equation}\label{eq-varsigma}
    \wh{\varsigma^2}_\text{slow}:=\wh{\varsigma^2_h},\qquad\wh{\varsigma^2}_\text{fast}:={2}/{\wh{\varsigma^2_g}},
\end{equation}
respectively. They are both estimators for the asymptotic variance $\varsigma^2_h$ of the ergodic average $\hat{h}$. This value equals $\varsigma^2_h=8/\sigma^2_F=\sqrt{2\pi}$ for the standard Gaussian target $\pi_1$. For the anisotropic Gaussian target $\pi_3$, we use the estimated asymptotic variances in Section~\ref{subsec-experiment-d} as a proxy for the true values of $\wh{\varsigma^2}_\text{slow}$ and $\wh{\varsigma^2}_\text{fast}$ to compute MSE.

Figure~\ref{fig-Exp3} shows boxplots of $\wh{\varsigma^2}_\text{slow}$ and $\wh{\varsigma^2}_\text{fast}$ over 100 independent runs, together with the MSE of each estimator. For both targets, we observe that $\wh{\varsigma^2}_\text{slow}$ has stable variation across all dimensions, whereas both the variance and the bias of $\wh{\varsigma^2}_\text{fast}$ decrease as the dimension grows. As a result, the MSE ordering reverses, with $\wh{\varsigma^2}_{\text{fast}}$ eventually outperforming $\wh{\varsigma^2}_{\text{slow}}$, and the advantage widens as $d$ becomes large.

\begin{figure}[htbp]\centering
    \includegraphics[width=0.45\textwidth]{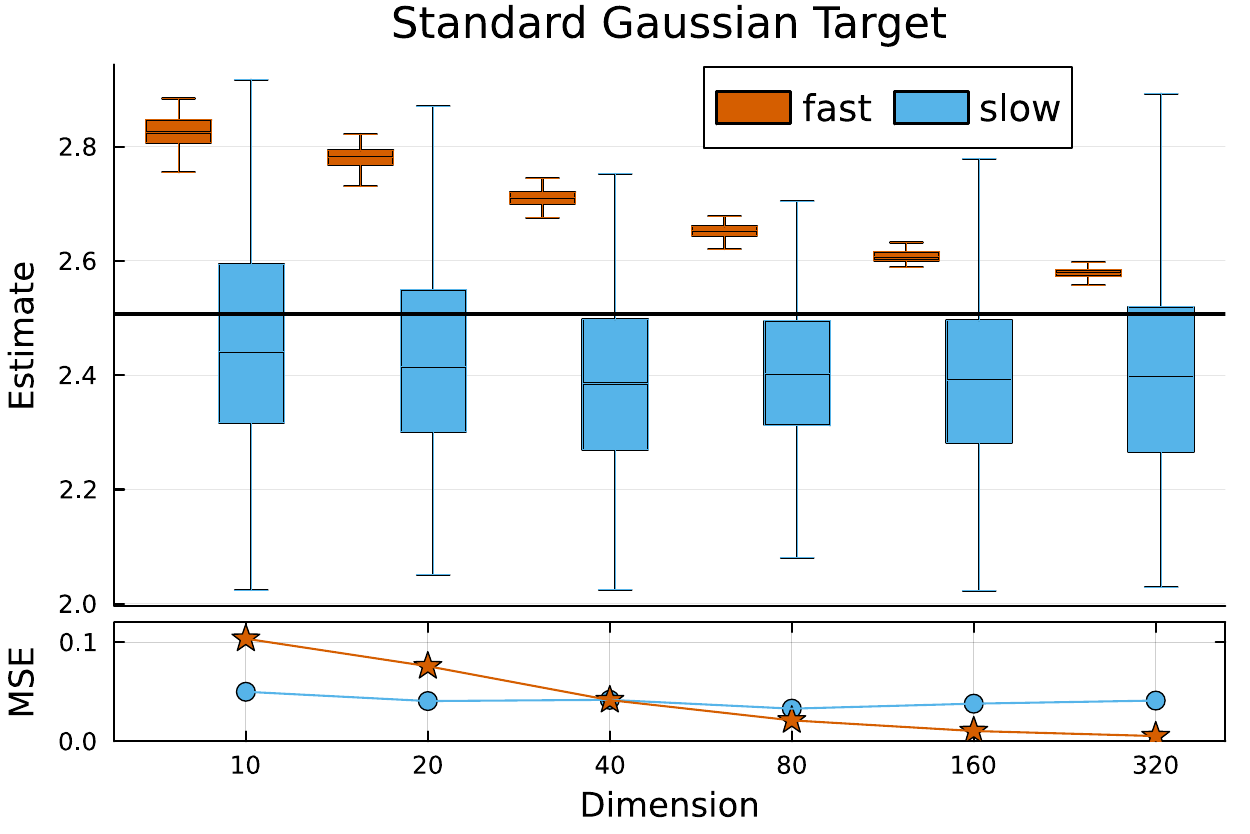}
    \includegraphics[width=0.45\textwidth]{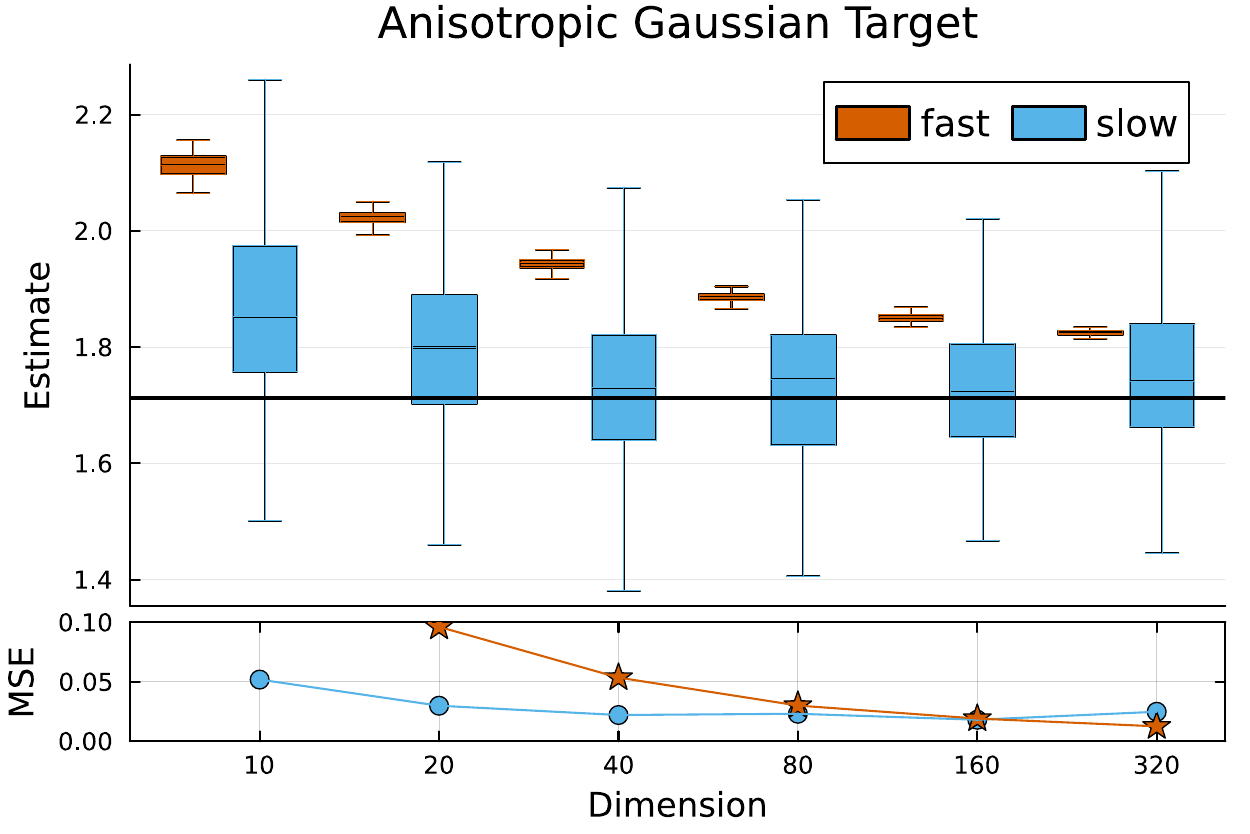}
    \caption{Boxplots of two estimators, $\wh{\varsigma^2}_{\textrm{slow}}$ and $\wh{\varsigma^2}_\textrm{fast}$, over 100 runs against increasing dimensions $d$.
    The targets are either the standard Gaussian distribution $\pi_1(x)\propto\exp(-\abs{x}^2/2)$ (left plot) or the anisotropic Gaussian distribution $\pi_3(x)\propto\exp(-x^\top\Sigma^{-1}x/2)$ with $\Sigma_{ii}=1$ and $\Sigma_{ij}=0.5$ for $i\ne j$ (right plot).
    The black lines in both plots represent the (proxy of) true values.
    The subplot on the bottom represents the mean squared error (MSE) of each estimator.
    }\label{fig-Exp3}
\end{figure}

Our experiment suggests that using $\wh{\varsigma^2_g}$ instead of the usual BM estimator $\wh{\varsigma^2_h}$ results in a lower MSE when the dimension $d$ is large, and that this phenomenon can be observed for a wide variety of targets other than standard Gaussian distributions. As $\wh{g}_T$ is easy to compute on the fly, it may be used to design an adaptive PDMP scheme \citep{Bertazzi-Bierkens2022} by maximizing the estimate internally \citep{Wang+2025}. Adaptive estimation of transport coefficients, such as $\sigma_F$ in our context, has also been considered in molecular dynamics \citep{Jones-Mandadapu2012}.

\section{Conclusion}\label{sec-Conclusion}

To facilitate the development of even more efficient PDMP algorithms, we have proposed analyzing the potential process $Y$, as it constitutes one of the slowest quantities in typical PDMP dynamics. Through its time derivative process $R$, we have established the diffusive scaling limits in Section~\ref{sec-ScalingAnalysis}. Our proof techniques were crucial in establishing the limit for $\rho=0$, the most interesting case for FECMC. The dimensionality scaling $O(d)$ turns out to be identical to BPS. Subsequently, in Section~\ref{sec-Diffusivity}, we provided closed-form formulae for the diffusion coefficients $\sigma_F$ and $\sigma_B$. Our explicit representations were made possible by explicitly solving the Poisson equation $-L^{-1}f=\id$, where $L$ is the generator of $R$. This diffusion coefficient turns out to depend solely on the solution $f$. This observation can facilitate potential improvements in existing PDMP samplers. For instance, maximizing the value, or the expectation \eqref{eq-sigma-to-resolvent}, of this solution $f$ within the state-dependent speed function framework of \cite{Bertazzi-Vasdekis2025} will improve the mixing of the potential process. Additionally, we suggested using the variance of squared increments as a proxy for estimating the asymptotic variance of the ergodic averages of the potential. This exploits the fine structure that resides in the continuous-time trajectory and is unique to PDMP samplers. Its effectiveness in high dimensions was explained theoretically as an application of our scaling analysis results and demonstrated numerically.

The fast proxy estimator $\wh{\varsigma^2}_{\text{fast}}$, defined in Eq.~\eqref{eq-varsigma}, leverages the variability of the velocity variable by using finer batching than standard batch means estimators. The relationship that $2/\wh{\varsigma^2_g}$ matches $\wh{\varsigma^2}_{\text{slow}}$ is a consequence of the OU limit. Accordingly, our estimator relies crucially on the existence of an OU limit for the rescaled potential process. Our experiments suggest this OU limit holds quite universally, thereby supporting the effectiveness of the proposed estimator for the potential function. A more systematic treatment is in order; however, we leave it for future work. This estimator is closely related to variance reduction techniques based on control variates. In this context, the functions $f^F$ and $f^B$ in Eq.~\eqref{eq-sigma-to-resolvent} can be seen as a continuous analogue of the \textit{fishy function} defined and discussed in \cite{Douc+2025,South-Sutton2025}.

As mentioned in discussing our proof strategy in Section~\ref{subsec-proof-strategy}, our limit theorems are closely related to the Kipnis--Varadhan approach to CLTs for additive functionals of Markov processes, initiated in \cite{Kipnis-Varadhan1986}. In that setting, a centered additive functional
\[A_t\coloneq \int^t_0V(X_s)\,\d s\]
of an ergodic Markov process $(X_t)_{t\ge0}$ is considered. In our case, $X$ corresponds to the limiting radial momentum process $R$. With this choice, $A$ coincides with the potential process $Y$ by taking $V(x)=x$. However, our approach differs in two ways from that of \cite{Kipnis-Varadhan1986} and from subsequent extensions to irreversible processes such as \cite{Bhattacharya1982,Cattiaux+2011,Komorowski+2012}. First, the primary interest in these works is the scaling limit of a single process of the form $\alpha_d^{-1}A_{dt}$ for a suitable space-scaling function $d\mapsto\alpha_d$, whereas we study a sequence of processes $(A^d)_{d=1}^\infty$ as we take the limit $d\to\infty$. In other words, we are simultaneously considering a high-dimensional limit and a scaling limit as $d\to\infty$. Second, our development proceeds in the reverse order of the Kipnis--Varadhan program. In typical Kipnis--Varadhan-type arguments, one starts from analytic assumptions ensuring that the resolvent $(\rho-L)^{-1}$ behaves well as $\rho\searrow0$, and then deduces a (functional) central limit theorem together with a Green--Kubo-type formula for the asymptotic variance. In contrast, we first identify the limiting process and its diffusion coefficient in Theorems~\ref{thm-Y} and \ref{thm-Y-positive-rho}, and only afterwards show that the limit $\rho\searrow0$ indeed provides the asymptotic variance (Theorem~\ref{thm-formula-for-sigma}). 

\begin{appendix}

\section{Convergence of the Potential}\label{sec-appendix-A}

The objective here is to prove Theorem~\ref{thm-Y}, deriving the limit of the potential processes $Y^d$ as $d\to\infty$. Recall that $Z^d=(X^d,V^d)$ is the output of FECMC on $\R^d$, a Markov process by design. However, $Y^d$ is not a Markov process with respect to its natural filtration $(\cF_t^d)$, even though it is Markov with respect to the filtration $(\cG_t^d)$ generated by $Z^d$.

\subsection{Convergence of the Piecewise Constant Approximations}\label{sec-appendix-skeleton-chain}

To prove the convergence of $Y^d$, we first prove the convergence of its skeleton processes $\ov{Y}^d$, which are defined below in \eqref{eq-ovY}, composed of the points $Y^d_{S_n/d}$ on the event times $S_n$ rescaled by a factor of $d^{-1}$. After this, we finalize the proof by establishing asymptotic equivalence between $\ov{Y}^d$ and $Y^d$. The jump times of the FECMC process $Z^d$ are denoted by $0=S_0,S_1,S_2,\cdots$, which are $(\cG_t)$-stopping times. Note that they are $(\cF_t^d)$-stopping times as well, as long as $(\cF_t^d)$ is right-continuous, which we assure ourselves here. Using these stopping times $(S_n)$, we define the \textit{skeleton process} $\ov{Y}^d$ by
\begin{equation}\label{eq-ovY}
    \ov{Y}^d_t\coloneq \sum_{n=0}^\infty Y^d_{S_n/d}1_{[\frac{S_n}{d},\frac{S_{n+1}}{d})}(t).
\end{equation}

We now derive its dual predictable projection. We first define $\ov{\cF}^d_{n}\coloneq \cF^d_{S_n/d-}$, which is the $\sigma$-field of events strictly prior to the jump at $S_n/d$. Substituting $n$ with $n(t)\coloneq \max\{n\ge0\mid S_n\le dt\}$, we consider the continuous-time filtration $(\ov{\cF}^d_{n(t)})$. With respect to this filtration, the dual predictable projection of $\ov{Y}^d$ is given by
\begin{equation}\label{eq-Bd}
    B^d_t\coloneq \sum_{n=1}^\infty\E\Square{\Delta\ov{Y}^d_n\,\middle|\,\ov{\cF}^d_{n(t)-1}}1_{[\frac{S_{n-1}}{d},\infty)}(t),
\end{equation}
where $\Delta\ov{Y}^d_n\coloneq Y^d_{S_n/d}-Y^d_{S_{n-1}/d}$. See, e.g., \citet[15.7]{Metivier1982} and \citet[II.3.11]{Jacod-Shiryaev2003}.

This predictable projection $B^d$ constitutes one of the local characteristics, or the triplet, of the semimartingale $\ov{Y}^d$. The other two are the (predictable) quadratic variation $C^d=\brac{M^{d,c}}$ of the continuous part of $M^d\coloneq \ov{Y}^d-B^d-Y_0^d$ and the dual predictable projection $\nu^d$ of the random measure associated with the jumps of $\ov{Y}^d$. These three characteristics $B^d,C^d,\nu^d$ are respectively called the first, second, and third characteristic. For general definitions, see \citet[32.2]{Metivier1982} and \citet[II.2.6]{Jacod-Shiryaev2003}. We first derive the limits of the characteristics $(B^d,C^d,\nu^d)$ and establish Theorem~\ref{thm-Y} based on the framework of \citet[Theorem IX.3.48]{Jacod-Shiryaev2003}.

\subsubsection{Convergence of the First Characteristic}

We start with the first characteristic $B^d$. From now on, we will often add the superscript of $d$ in the stopping time $S_n$ as $S_n^d$ to indicate its ambient dimension, as its distributional properties crucially depend on $d$.
We introduce the notation $T^d_n\coloneq S_n^d-S_{n-1}^d$.

\begin{proposition}[Convergence of the first characteristic $B^d$]\label{prop-A-B}
    Defining $b(y)=-\sqrt{\frac{2}{\pi}}y$, it holds for all $N\ge1$ that
    \[\sup_{1\le n\le Nd}\Abs{\int^{S_n^d/d}_0b(\ov{Y}^d_t)\,\dt-\sum_{i=1}^n\E[\Delta\ov{Y}^d_i|\ov{\cF}^d_{i-1}]}\xrightarrow[d\to\infty]{\P}0,\qquad N\ge1.\]
\end{proposition}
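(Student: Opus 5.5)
The plan is to compute the conditional drift increment explicitly and then reduce the statement to a law of large numbers for the inter-event times $T_i^d$, weighted by $\ov{Y}^d$.

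\textbf{Step 1: the conditional drift increment.} Between events the velocity is a unit vector, so with $r\coloneq R^d_{S_{i-1}}$ the post-jump radial momentum,
\[|X_{S_{i-1}+t}|^2=|X_{S_{i-1}}|^2+2tr+t^2 .\]
Hence
\[\Delta\ov{Y}^d_i=\frac{2T_i^dr+(T_i^d)^2}{\sqrt d}.\]
With $\rho=0$ the event rate along the segment is $(r+t)_+$. Using $\E[2Tr+T^2]=\int_0^\infty 2(r+t)\P(T>t)\,\dt$, an elementary computation gives
\[\E[2Tr+T^2\mid r]=2-r^2 1_{\{r<0\}}.\]
By Step~1 of Section~\ref{subsec-FECMC}, after a jump we have $r=-w^d|X_{S_{i-1}}|$ with $w^d\sim q^{\tpar,d}$ drawn independently. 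Its second moment is $\E[(w^d)^2]=2/(d+1)$. Writing $|X|^2=d+\sqrt d\,Y$, this yields
\[\E[\Delta\ov{Y}^d_i\mid\ov{\cF}^d_{i-1}]=\frac{1}{\sqrt d}\Bigl(2-\frac{2|X_{S_{i-1}}|^2}{d+1}\Bigr)=-\frac{2}{d}\ov{Y}^d_{S_{i-1}/d}+O\bigl(d^{-3/2}(1+|\ov{Y}^d|)\bigr).\]
The indexing of the filtration needs care here. The conditioning is on the state just before the jump, and the fresh draw $w^d$ is averaged out.

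\textbf{Step 2: reduction to a weighted LLN for $T_i^d$.} Since $\ov{Y}^d$ is piecewise constant,
\[\int_0^{S_n^d/d}b(\ov{Y}^d_t)\,\dt=-\sqrt{\tfrac2\pi}\sum_{i\le n}\ov{Y}^d_{S_{i-1}/d}\,\frac{T_i^d}{d}.\]
The difference to control is therefore
\[\frac1d\sum_{i\le n}\ov{Y}^d_{S_{i-1}/d}\Bigl(2-\sqrt{\tfrac2\pi}T_i^d\Bigr),\]
plus an $O(Nd\cdot d^{-3/2})$ remainder. For $r<0$, which is always the case after a jump, the hazard computation gives
\[\E[T\mid r]=-r+\sqrt{\pi/2},\qquad\text{so}\qquad \E[T_i^d\mid\ov{\cF}^d_{i-1}]=\E[w^d]\,|X_{S_{i-1}}|+\sqrt{\pi/2}.\]
Using $\E[w^d]\sqrt d\to\E[\tau]=\sqrt{\pi/2}$ for $\tau\sim\chi(2)$ (Appendix~\ref{sec-appendix-B}) and $|X|=\sqrt d(1+O(Y/\sqrt d))$, this equals $\sqrt{2\pi}+O\bigl(d^{-1/2}(1+|Y|)\bigr)$. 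This matches Corollary~\ref{cor-number-of-events}, and then $\sqrt{2/\pi}\cdot\sqrt{2\pi}=2$. So the predictable part of the sum is
\[\frac1d\sum_{i\le Nd}O\bigl(d^{-1/2}(1+|Y|)^2\bigr)\to0.\]
The remaining part is a martingale with increments $\ov{Y}^d(T_i^d-\E[T_i^d\mid\cdot])/d$. By Doob's inequality its supremum over $n\le Nd$ has second moment at most
\[d^{-2}\sum_{i\le Nd}\E\bigl[Y_{i-1}^2\,\Var(T_i^d\mid\cdot)\bigr]=O(N/d).\]
The conditional variance is $O(1+(w^d)^2|X|^2)$, which is bounded in expectation.

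\textbf{Main obstacle.} The expected difficulty is obtaining moment bounds for $\ov{Y}^d$ and $|X|^2$ evaluated at the embedded jump times $S_{i-1}$. The embedded chain is not stationary under $\pi^d\otimes\mu^d$: it is Palm-biased by the event rate. I would try to handle this in one of two ways.
\begin{itemize}
\item Express sums over events as integrals against the counting measure, whose compensator is $(R^d_t)_+\,\dt$. Stationarity of $Z^d$ then gives
\[\E\Bigl[\sum_{S_i\le t}F(Z_{S_i-})\Bigr]=t\,\E_{\pi\otimes\mu}[F\,(R)_+].\]
Combined with Gaussian/chi-square moments of $|X|^2$ and the Rayleigh-type moments of $\sqrt d\,w^d$, this bounds $\E\sum_{i\le n}|Y_{i-1}|^k/d$ up to times $S_n$.
\item Alternatively, first localise on the event $\{S^d_{Nd}/d\le C\}$, whose probability tends to one by the LLN above, and on $\{\sup_{t\le C}|Y^d_t|\le K\}$.
\end{itemize}
Either way, uniformity in $n\le Nd$ follows from Doob's inequality plus the deterministic $O(d^{-1/2})$ bias bounds.
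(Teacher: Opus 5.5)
Your argument is sound and rests on the same computations as the paper's proof, namely the conditional moments of Lemmas~\ref{lemma-T} and~\ref{lemma-DeltaYd}: $\E[\Delta\ov{Y}^d_i\mid\ov{\cF}^d_{i-1}]=-\frac{2}{d+1}\ov{Y}^d_{S_{i-1}/d}+\frac{2}{\sqrt{d}(d+1)}$ and $\E[T^d_i\mid\ov{\cF}^d_{i-1}]=\sqrt{2\pi}+O(d^{-1/2}(1+\abs{\ov{Y}^d_{S_{i-1}/d}}))$. What differs is how you organize the maximal step. The paper keeps the whole difference $M^d_n$ as a quasimartingale and invokes M\'etivier's generalized Doob inequality. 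This obliges it to bound the Dol\'eans--Dade measure $\abs{\lambda^d_2}$ and to expand $\E[(M^d_{Nd})^2]$, including the $O(d^2)$ cross terms $\E[\Delta M^d_i\Delta M^d_j]$, each $O(d^{-5/2})$ by Lemma~\ref{lemma-M}. You instead split off the predictable bias, which is controlled uniformly in $n\le Nd$ by the sum of absolute values. You then apply the classical $L^2$ Doob inequality to the genuine $(\ov{\cF}^d_i)$-martingale $\sqrt{2/\pi}\,d^{-1}\sum_{i\le n}\ov{Y}^d_{S_{i-1}/d}\bigl(T^d_i-\E[T^d_i\mid\ov{\cF}^d_{i-1}]\bigr)$; adaptedness holds because $S_i/d$ is $\cF^d_{S_i/d-}$-measurable. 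This is more elementary and more transparent, and the quasimartingale machinery buys little here. One small correction: the summand $i=1$ has $R^d_0\sim N(0,1)$ rather than a post-jump value, but it contributes only $O_{\P}(d^{-1/2})$.

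Both arguments need $\sup_{i\le Nd}\E\bigl[\abs{\ov{Y}^d_{S_i/d}}^k\bigr]=O(1)$ uniformly in $d$ for small $k$. The paper uses this tacitly when it passes from the $O_{\P}$ bounds of Lemma~\ref{lemma-M} to expectations. Your localization route is circular as written. The claim $\P[S^d_{Nd}\le Cd]\to1$ ``by the LLN above'' relies on exactly these bounds. Localizing on $\{\sup_{t\le C}\abs{Y^d_t}\le K\}$ presupposes tightness of $Y^d$, which this proposition is meant to feed into. The Palm identity by itself only controls sums over $\{S_i\le t\}$, not over $\{i\le Nd\}$.

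A direct fix comes out of your Step~1. For $i\ge2$ one has $r=-w_i\abs{X^d_{S_{i-1}}}$ and $T_i=w_i\abs{X^d_{S_{i-1}}}+\tau_i$. Hence $Q_i=\abs{X^d_{S_i}}^2$ obeys the exact random affine recursion $Q_i=(1-w_i^2)Q_{i-1}+\tau_i^2$, with $(w_i,\tau_i)$ i.i.d.\ and independent of $Q_{i-1}$, and $1-w_i^2\sim\mathrm{Beta}(\frac{d-1}{2},1)$. Since $\E[1-w^2]=1-\frac{2}{d+1}$ and $\E[(1-w^2)^2]=1-\frac{4}{d+3}$, iterating shows two things uniformly in $i$: $\E[Q_i]-(d+1)$ stays $O(1)$, and $\E[(Q_i-d)^2]=O(d)$. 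That is, $\sup_i\E[(\ov{Y}^d_{S_i/d})^2]=O(1)$, and higher moments follow in the same way. This is all your bias and Doob estimates require.
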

\begin{proof}
    To rewrite the statement, let us define the following two processes:
    \begin{equation}\label{eq-DeltaM}
        \Delta M_n^d\coloneq -\paren{\frac{S_n^d}{d}-\frac{S_{n-1}^d}{d}}\sqrt{\frac{2}{\pi}}\ov{Y}^d_{\frac{S^d_{n-1}}{d}}-\E[\Delta\ov{Y}^d_n|\ov{\cF}^d_{n-1}],
    \end{equation}
    \[M^{d}_n\coloneq \sum_{i=1}^n\Delta M_i^d=-\sum_{i=1}^n\paren{\frac{S_i^d}{d}-\frac{S_{i-1}^d}{d}}\sqrt{\frac{2}{\pi}}\ov{Y}^d_{\frac{S^d_{i-1}}{d}}-\sum_{i=1}^n\E[\Delta\ov{Y}^d_i|\ov{\cF}^d_{i-1}].\]
    The statement is now equivalent to the convergence $\sup_{1\le n\le Nd}\abs{M_n^d}\to0$ in probability for an arbitrary $N\ge1$. As discussed in Section~\ref{subsec-proof-strategy}, one major obstacle in proving this convergence is that $M^d$ is only asymptotically martingale. However, we can employ the following generalized Doob inequality \cite[Corollary 9.7]{Metivier1982}
    \begin{equation}\label{eq-Doob}
        \epsilon^2\P\Square{\sup_{1\le n\le Nd}\abs{M_n^d}>\epsilon}\le\abs{\lambda^d_2}((0,Nd]\times\Omega)+\E\Square{\abs{M_{Nd}^d}^2},
    \end{equation}
    where $\epsilon>0,N\ge1$ and $\lambda_{p}^d$ are the Doléans-Dade measure of the quasimartingale $\abs{M_n^d}^p$ for $p\ge1$; see, e.g., \citet[8.6]{Metivier1982}.

    This inequality will reduce our proof to the convergence of the first and second moments of $M^d_n$. We confirm this by treating the two terms in the right-hand side of \eqref{eq-Doob} one by one.
    The first term can be bounded as follows:
    \begin{align*}
        \abs{\lambda^d_2}((0,Nd]\times\Omega)
        &=\sum_{n=1}^{Nd}\lambda^d_2((n-1,n]\times\Omega)\\
        &=\sum_{n=1}^{Nd}\E\Square{\ABs{\E[(M_n^d)^2|\ov{\cF}^d_{n-1}]-(M_{n-1}^d)^2}}\\
        &=\sum_{n=1}^{Nd}\E\Square{\ABs{\E[(\Delta M_n^d)^2|\ov{\cF}^d_{n-1}]+2M_{n-1}^d\E[\Delta M_n^d|\ov{\cF}^d_{n-1}]}}\\
        &\le\sum_{n=1}^{Nd}\E[(\Delta M_n^d)^2]+2\sum_{n=1}^{Nd}\E\SQuare{\abs{M_{n-1}^d}\ABs{\E[\Delta M_n^d|\ov{\cF}^d_{n-1}]}}.
    \end{align*}
    According to Lemma \ref{lemma-M} that follows, we have $\abs{\lambda_2^d}((0,Nd]\times\Omega)\xrightarrow{d\to\infty}0$ if we can show that $\E[\abs{M_{Nd}^d}^2]\to0$. This fact follows immediately from Lemma \ref{lemma-M} and the following relationship:
    \[\E[M^2_{Nd}]=\sum_{i=1}^{Nd}\E[(\Delta M_n^d)^2]+2\sum_{i>j}\E[\Delta M_i^d\Delta M_j^d].\]
\end{proof}

\begin{lemma}\label{lemma-M}
    The following asymptotic evaluations hold for $\Delta M_n^d$ defined in \eqref{eq-DeltaM}
    \[\E[\Delta M_n^d|\ov{\cF}^d_{n-1}]=O_{\P}(d^{-\frac{3}{2}}),\qquad\E[(\Delta M_n^d)^2|\ov{\cF}^d_{n-1}]=O_{\P}(d^{-2}),\qquad (d\to\infty).\]
\end{lemma}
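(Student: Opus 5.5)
We analyse a single inter-event step of FECMC with $\rho=0$, conditioning on the state just before event $n-1$ (a slight abuse of the filtration, which we adjust at the end). Write $r\coloneq R^d_{S_{n-1}}=(X|V)$ and $\abs{X}^2=d+\sqrt{d}\,y$, where $y=\ov{Y}^d_{S_{n-1}/d}$. At event $n-1$ the parallel velocity is refreshed: the new radial momentum is $-\abs{X}\,w$ with $w\sim q^{\tpar,d}$. By the asymptotics of Appendix~\ref{sec-appendix-B}, $\sqrt{d}\,w\to\chi(2)$, so the new radial momentum is $-\tau_d\sqrt{\abs{X}^2/d}$, with $\tau_d\approx\tau\sim\chi(2)$.

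Between events the dynamics are explicit. Along $x+tv$ with $\abs{v}=1$ we have $\frac{\d}{\dt}\abs{X_t}^2=2R_t$ and $R_t=R_0+t$. The rate is $(R_t)_+$, so starting from $R_0=-a<0$ the next event occurs at time $T=a+\sqrt{2E}$ with $E\sim\mathrm{Exp}(1)$. Over this interval
\[\Delta\abs{X}^2=2\int_0^T R_t\,\dt=2(-aT+T^2/2)=2E-a^2.\]
Hence
\[\Delta\ov{Y}^d_n=\frac{2E-a^2}{\sqrt{d}},\qquad a=\tau_d\,\frac{\abs{X}}{\sqrt{d}}=\tau_d\Bigl(1+\frac{y}{2\sqrt{d}}+O\bigl(\frac{y^2}{d}\bigr)\Bigr).\]

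Next we compute the two conditional moments by taking expectations. Since $\E[\tau^2]=2$ and $\E[2E]=2$, the leading term cancels. The next term is
\[\E[\Delta\ov{Y}^d_n\mid\cdot]=-\frac{\E[\tau_d^2]-2}{\sqrt d}-\frac{2y}{2d}\cdot\E[\tau_d^2]\cdot\tfrac{1}{1}+\dots\]
Precisely, $a^2=\tau_d^2\abs{X}^2/d=\tau_d^2(1+y/\sqrt{d})$. With $\E[\tau_d^2]$ computed exactly from \eqref{eq-q-tpar}, namely $d\,\E[w^2]=2d/(d+1)=2+O(d^{-1})$, this gives
\[\E[\Delta\ov{Y}^d_n\mid\cdot]=-\frac{2y}{d}+O_{\P}(d^{-3/2}).\]
On the other side, $\E[T]=\E[a]+\E[\sqrt{2E}]=\sqrt{\pi/2}+\sqrt{\pi/2}+O_{\P}(d^{-1/2})=\sqrt{2\pi}+O_{\P}(d^{-1/2})$, using $\E[\tau]=\sqrt{\pi/2}=\E[\sqrt{2E}]$. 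Therefore
\[-\frac{\E[T]}{d}\sqrt{\tfrac{2}{\pi}}\,y=-\frac{2y}{d}+O_{\P}(d^{-3/2}).\]
The two $-2y/d$ terms cancel in $\Delta M^d_n$, giving the first estimate. This is consistent with Corollary~\ref{cor-number-of-events}.

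For the second moment, both pieces of $\Delta M^d_n$ are $O(d^{-1})$ times quantities with all moments bounded: the time increment is $T/d$ with $\ov{Y}$ tight under stationarity, and the conditional mean has just been shown to be $O_{\P}(d^{-1})$. This yields $\E[(\Delta M^d_n)^2\mid\ov{\cF}^d_{n-1}]=O_{\P}(d^{-2})$. Note that $\Delta M^d_n$ as defined in \eqref{eq-DeltaM} subtracts the conditional mean of $\Delta\ov{Y}$, not $\Delta\ov Y$ itself, so the $d^{-1/2}$ fluctuation of $\Delta\ov{Y}$ never enters.

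The main obstacle is the bookkeeping of the conditioning $\sigma$-field. $\ov{\cF}^d_{n-1}=\cF^d_{S_{n-1}/d-}$ is generated by the potential path only and sits strictly before the refresh at $S_{n-1}$. Consequently the refreshed $w$ is indeed independent of it, while $\abs{X_{S_{n-1}}}^2$ is measurable, which justifies the computation above. The inter-event time, however, is random and not measurable, so the $T$ term must be conditioned as done above. We also need uniform control of the remainders, where $y^2/d$ enters. These remainders are handled by stationarity: $\E[\ov{Y}^4]$ is bounded uniformly in $d$, and the Beta-type moments of $q^{\tpar,d}$ are computed exactly. This makes all the $O_{\P}$ terms uniform in $n\le Nd$.
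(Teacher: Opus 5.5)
Your proposal is correct and follows the paper's argument. You use the representation $T_n=\abs{X^d_{S_{n-1}}}W^d+\tau$ and $\Delta\ov{Y}^d_n=(\tau^2-\abs{X^d_{S_{n-1}}}^2(W^d)^2)/\sqrt{d}$ (your $\sqrt{2E}$ is exactly the Rayleigh variable $\tau$), the exact moments of $q^{\tpar,d}$ and $\chi(2)$ yielding $\E[T_n\mid\cdot]=\sqrt{2\pi}+O_{\P}(d^{-1/2})$ and $\E[\Delta\ov{Y}^d_n\mid\cdot]=-\frac{2}{d+1}y+O(d^{-3/2})$ as in Lemmas~\ref{lemma-T} and~\ref{lemma-DeltaYd}, the cancellation of the two $-2y/d$ terms, and a termwise bound for the second moment; the only difference is that you derive these inter-event quantities directly from the integrated hazard instead of citing those lemmas.
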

\begin{proof}
    The proof is straightforward once the asymptotic expressions of $T_i^d$ and $\Delta\ov{Y}^d_i$ are given in Lemma \ref{lemma-T} and \ref{lemma-DeltaYd}, respectively. The calculation proceeds as follows:
    \begin{align*}
    \E[\Delta M_n|\ov{\cF}_{n-1}^d]&=\E\Square{-\sqrt{\frac{2}{\pi}}\paren{\frac{S_n^d}{d}-\frac{S_{n-1}^d}{d}}\ov{Y}^d_{S_{n-1}^d/d}-\E[\Delta\ov{Y}^d_n|\ov{\cF}_{n-1}^d]\,\middle|\,\ov{\cF}_{n-1}^d}\\
    &=-\sqrt{\frac{2}{\pi}}\ov{Y}^d_{S_{n-1}^d/d}\frac{\E[T_n^d|\ov{\cF}_{n-1}^d]}{d}-\E[\Delta\ov{Y}^d_n|\ov{\cF}_{n-1}^d]\\
    &=-\sqrt{\frac{2}{\pi}}\ov{Y}^d_{S_{n-1}^d/d}\Paren{\frac{\sqrt{2\pi}}{d}+O_{\P}(d^{-\frac{3}{2}})}+\frac{2}{d+1}\ov{Y}^d_{S^d_{n-1}/d}+O(d^{-\frac{3}{2}})\\
    &=\ov{Y}^d_{S_{n-1}^d/d}\paren{\frac{2}{d+1}-\frac{2}{d}}+O_{\P}(d^{-\frac{3}{2}}).
  \end{align*}
  The first term in the right-hand side is of order $O_{\P}(d^{-2})$. This implies the first equation in the statement.
  The second equation can be deduced similarly
  \begin{align*}
      \E[(\Delta M_n^d)^2|\ov{\cF}^d_{n-1}]&=\E\Square{\paren{\frac{S_n^d-S_{n-1}^d}{d}}^2\frac{2}{\pi}(\ov{Y}^d_{S_{n-1}^d/d})^2\,\bigg|\,\ov{\cF}^d_{n-1}}\\
      &\qquad+\E\Square{\frac{S_n^d-S_{n-1}^d}{d}\sqrt{\frac{2}{\pi}}\ov{Y}^d_{S_{n-1}^d/d}\E[\Delta\ov{Y}^d_n|\ov{\cF}^d_{n-1}]\,\bigg|\,\ov{\cF}^d_{n-1}}\\
      &\qquad\qquad+\E[\Delta\ov{Y}^d_n|\ov{\cF}^d_{n-1}]^2\\
      &=\frac{\E[(T^d_n)^2|\ov{\cF}^d_{n-1}]}{d^2}\frac{2}{\pi}(\ov{Y}^d_{S_{n-1}^d/d})^2\\
      &\qquad+\frac{\E[T_n^d|\ov{\cF}^d_{n-1}]}{d}\sqrt{\frac{2}{\pi}}\ov{Y}^d_{S^d_{n-1}/d}\E[\Delta\ov{Y}^d_n|\ov{\cF}^d_{n-1}]+\E[\Delta\ov{Y}^d_n|\ov{\cF}^d_{n-1}]^2.
  \end{align*}
  This time, all terms appearing on the rightmost side are of order $O_{\P}(d^{-2})$, given lemmas \ref{lemma-T} and \ref{lemma-DeltaYd}.
\end{proof}

\subsubsection{Convergence of the Modified Second Characteristic}

Here, we consider a modified second characteristic $\wt{C}^d\coloneq \brac{M^d}$. This can be simplified as:
\[\wt{C}_t^d\coloneq \sum_{n=1}^\infty\Paren{\E[(\Delta\ov{Y}^d_n)^2|\ov{\cF}_{n-1}^d]-(\E[\Delta\ov{Y}^d_n|\ov{\cF}_{n-1}^d])^2}1_{[\frac{S_{n-1}}{d},\infty)}(t).\]
Since this quantity converges to the second characteristic of the limiting process, which is the squared diffusion coefficient $\sigma^2_F$ of the OU process \eqref{eq-Y-FECMC} in our case.

\begin{proposition}[Convergence of the Modified Second Characteristic $B^d$]\label{prop-A-C}
    Setting $c\coloneq \frac{8}{\sqrt{2\pi}}$, we have the following convergence:
    \[\int^{S_{Nd}^d/d}_0c\,\dt-\sum_{n=1}^{Nd}\Paren{\E[(\Delta\ov{Y}^d_n)^2|\ov{\cF}_{n-1}^d]-(\E[\Delta\ov{Y}^d_n|\ov{\cF}_{n-1}^d])^2}\xrightarrow[d\to\infty]{\P}0.\]
\end{proposition}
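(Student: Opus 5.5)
Since $\int_0^{S^d_{Nd}/d}c\,\dt=c\,S^d_{Nd}/d$, the statement reduces to comparing $c\sum_{n=1}^{Nd}T^d_n/d$ with $\sum_{n=1}^{Nd}\Var[\Delta\ov{Y}^d_n|\ov{\cF}^d_{n-1}]$. The subtracted squared conditional means are harmless. By Lemma~\ref{lemma-DeltaYd} (as used in Lemma~\ref{lemma-M}), $\E[\Delta\ov{Y}^d_n|\ov{\cF}^d_{n-1}]=O_{\P}(d^{-1})$, so their squares are $O_{\P}(d^{-2})$. Summed over $Nd$ indices this gives $O_{\P}(d^{-1})$, provided the $O_{\P}$ is uniform. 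Uniformity follows from stationarity: every $Z^d_{S_{n-1}}$ has a law controlled by $\pi^d\otimes\mu^d$, and bounding expectations of the sums suffices. So the main task is
\[
\sum_{n=1}^{Nd}\Paren{\E[(\Delta\ov{Y}^d_n)^2|\ov{\cF}^d_{n-1}]-\frac{c}{d}\,T^d_n}\xrightarrow{\P}0.
\]

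Write $\Delta\ov{Y}^d_n=d^{-1/2}\int_{S_{n-1}}^{S_n}2R^d_s\,\ds$. Between events $R^d$ grows linearly with slope $|V|^2=1$. So $\Delta\ov{Y}^d_n=d^{-1/2}\bigl(2R^d_{S_{n-1}}T^d_n+(T^d_n)^2\bigr)$, and therefore
\[
\E[(\Delta\ov{Y}^d_n)^2|\ov{\cF}^d_{n-1}]=d^{-1}\,\E\bigl[(2rT+T^2)^2\bigr]\Big|_{r=R^d_{S_{n-1}}},
\]
where $T$ has the conditional law of the inter-event time given the post-jump radial momentum $r$. The refreshed post-jump value is $r=-|X|w$ with $w\sim q^{\tpar,d}$, so $r\approx-\tau$ with $\tau\sim\chi(2)$ by the asymptotics of Appendix~\ref{sec-appendix-B}. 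Rate $(r+s)_+$ gives $T=-r+\sqrt{r^2+2E}$ with $E\sim\mathrm{Exp}(1)$, up to corrections of order $d^{-1/2}$ from $|X|$ and from the variation of the rate along the path; these are quantified in Lemma~\ref{lemma-T}. Thus $\E[(\Delta\ov{Y}^d_n)^2|\ov{\cF}^d_{n-1}]=d^{-1}\phi(R^d_{S_{n-1}})+O_{\P}(d^{-3/2})$, where $\phi(r)=\E[(2rT+T^2)^2]$. With $s=\sqrt{r^2+2E}$ we have $2rT+T^2=s^2-r^2=2E$, so $\phi(r)=\E[4E^2]=8$ exactly. Hence the conditional second moment equals $8/d+O_{\P}(d^{-3/2})$, uniformly in $n$, and summing gives $8N+o_{\P}(1)$.

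It remains to show $c\,S^d_{Nd}/d\to 8N$ in probability, i.e.\ $S^d_{Nd}/(Nd)\to\sqrt{2\pi}=8/c$. This is a law of large numbers for the inter-event times. Lemma~\ref{lemma-T} gives $\E[T^d_n|\ov{\cF}^d_{n-1}]=\sqrt{2\pi}+O_{\P}(d^{-1/2})$. The martingale differences $T^d_n-\E[T^d_n|\ov{\cF}^d_{n-1}]$ have bounded conditional second moments (the tail of $T$ is Gaussian-like given $r\le0$). Doob's $L^2$ inequality then makes their average over $Nd$ terms $O_{\P}(d^{-1/2})$, which gives the claim.

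The main obstacle is controlling the error terms uniformly over $n\le Nd$ rather than in $O_{\P}$ for fixed $n$. After summation, a per-step error of order $d^{-3/2}$ must remain $o(1)$, which requires uniform moment bounds on $R^d_{S_{n-1}}$, $|X^d_{S_{n-1}}|^2/d-1$ and $T^d_n$. I would obtain these by using the Palm/stationarity structure from Corollary~\ref{cor-number-of-events}: expectations over event times are expressed as stationary integrals $\sqrt{2\pi}\,\E_{\pi\otimes\mu}[\cdot]$ weighted by the rate. Uniform $L^1$ bounds on the error terms then follow from Gaussian moment estimates, and Markov's inequality finishes the argument.
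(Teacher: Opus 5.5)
\textbf{The gap is in your central computation, the claim $\phi(r)=\E[(2rT+T^2)^2]\equiv 8$.}

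After an event the radial momentum is $r=R^d_{S_{n-1}}=-\abs{X^d_{S_{n-1}-}}W^d\le 0$. For a nonpositive start the rate $(r+s)_+$ vanishes on the dead time $[0,\abs{r}]$. The waiting time is therefore $T=\abs{r}+\sqrt{2E}$, i.e.\ $T\deq\abs{X^d_{S_{n-1}-}}W^d+\tau$ with $\tau\sim\chi(2)$, as in \eqref{eq-T}. Your formula $T=-r+\sqrt{r^2+2E}$ is valid only for $r\ge 0$.

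With the correct law, $2rT+T^2=2E-r^2$: the potential first drops by $r^2/2$ during the dead time. Hence $\phi(r)=8-4r^2+r^4$, which is not constant.

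The identity $2rT+T^2=2E$ cannot be right in any case. It would make every skeleton increment $\Delta\ov{Y}^d_n=2E/\sqrt{d}$ strictly positive, so the potential could only grow along event times. That contradicts stationarity, and also the $O_{\P}(d^{-1})$ conditional mean you quoted from Lemma~\ref{lemma-DeltaYd}. It is precisely the dead-time term $-r^2/\sqrt{d}$ that cancels the $2/\sqrt{d}$ and produces the drift $-\frac{2}{d+1}Y^d$. Relatedly, there are no corrections from ``variation of the rate along the path'': $R^d$ has slope exactly $\abs{V^d}^2=1$ between events.

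\textbf{Repairing this also requires changing the conditioning.} The post-jump value $R^d_{S_{n-1}}$ is not $\ov{\cF}^d_{n-1}=\cF^d_{S_{n-1}/d-}$-measurable, because $W^d$ is drawn afresh at $S_{n-1}$. So $\E[(\Delta\ov{Y}^d_n)^2|\ov{\cF}^d_{n-1}]$ is not $d^{-1}\phi(R^d_{S_{n-1}})$. It equals $d^{-1}\E[\phi(-\abs{X^d_{S_{n-1}-}}W^d)\,|\,\ov{\cF}^d_{n-1}]$, with $W^d\sim q^{\tpar,d}$ independent.

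To evaluate it, use $\abs{X^d_{S_{n-1}-}}^2=d+\sqrt{d}\,Y^d_{S_{n-1}/d-}$ together with $\E[(W^d)^2]=\frac{2}{d+1}$ and $\E[(W^d)^4]=\frac{8}{(d+1)(d+3)}$ from Lemma~\ref{lemma-W}. This gives $d^{-1}(8-8+8)+O_{\P}(d^{-3/2})=8/d+O_{\P}(d^{-3/2})$. So the value $8$ is not a pathwise identity. It comes from a cancellation that appears only after averaging over the refreshment (in the limit, $\E[\tau^2]=2$ and $\E[\tau^4]=8$), which is the content of Lemma~\ref{lemma-DeltaYd}.

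\textbf{With this step replaced, the rest of your plan is sound.} The squared conditional means are negligible. Your law of large numbers for $S^d_{Nd}/(Nd)$ splits $T^d_n$ into the martingale difference $T^d_n-\E[T^d_n|\ov{\cF}^d_{n-1}]$ plus a predictable part controlled by Lemma~\ref{lemma-T}. That split makes explicit the orthogonality needed whenever the probability of the sum is bounded by a sum of second moments.
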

\begin{proof}
    Reformulating the statement using
    \[\Delta N_n^d\coloneq c\frac{T_n^d}{d}-\Paren{\E[(\Delta\ov{Y}^d_n)^2|\ov{\cF}_{n-1}^d]-(\E[\Delta\ov{Y}^d_n|\ov{\cF}_{n-1}^d])^2},\]
    what we need to prove is exactly $\sum_{n=1}^{Nd}\Delta N_n^d\xrightarrow{\P}0$.
    Since $(\E[\Delta\ov{Y}^d_n|\ov{\cF}_{n-1}^d])^2=O_{\P}(d^{-2})$ from Lemma \ref{lemma-DeltaYd}, we obtain
    \[\E[(\Delta N_n^d)^2]=\frac{c^2}{d^2}\E[(T_n^d)^2]-\frac{2c}{d}\E\SQuare{T_n\E[(\Delta\ov{Y}^d_n)^2|\ov{\cF}^d_{n-1}]}+O(d^{-3}).\]
    Therefore, we obtain $\E[(\Delta N_n^d)^2]=O(d^{-2})$ from, again, Lemma \ref{lemma-DeltaYd}.
    Consequently, we are able to finish the proof by the Markov inequality
    \[\P\Square{\Abs{\sum_{n=1}^{Nd}\Delta N_n^d}>\epsilon}\le\sum_{n=1}^{Nd}\frac{\E[(\Delta N_n^d)^2]}{\epsilon^2}\xrightarrow{d\to\infty}0\]
    for every $\epsilon>0$.
\end{proof}

\subsubsection{Finalization}

We are in a position to finalize the proof by establishing the convergence of the skeleton process $\ov{Y}^d$.

\begin{proposition}[Convergence of the skeleton processes]
    The skeleton process $\ov{Y}^d$, defined in \eqref{eq-ovY}, converges in law to $Y$, as stated in Theorem \ref{thm-Y}.
\end{proposition}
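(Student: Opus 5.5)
We prove the convergence of $\ov{Y}^d$ through the functional limit theorem for semimartingales, \citet[Theorem IX.3.48]{Jacod-Shiryaev2003}, used in the form of convergence of characteristics to those of a diffusion. The limiting OU process \eqref{eq-Y-FECMC} has characteristics $B_t=\int_0^t b(Y_s)\,\ds$ with $b(y)=-\sqrt{2/\pi}\,y=-\frac{\sigma_F^2}{4}y$, $C_t=ct$ with $c=8/\sqrt{2\pi}=\sigma_F^2$, and $\nu=0$. Its coefficients are Lipschitz, so the martingale problem is well posed, and the local uniqueness and continuity conditions of that theorem hold automatically. The initial laws converge: under stationarity $Y^d_0=(\abs{X_0^d}^2-d)/\sqrt d\Rightarrow N(0,2)$ by the CLT, which is the stationary law $N(0,4\sigma_F^2/(2\sigma_F^2))=N(0,2)$ of the OU process. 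It therefore remains to verify three things: convergence in probability of the first characteristic, of the (modified) second characteristic, and a Lindeberg-type condition on the jumps. Each must hold uniformly on compact time sets and be expressed through the canonical functionals of $\ov{Y}^d$.

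\textbf{Step 1: reindex from event counts to time.} Propositions~\ref{prop-A-B} and \ref{prop-A-C} are stated with event index $n\le Nd$, whereas \citet{Jacod-Shiryaev2003} needs $\sup_{t\le T}$. By Corollary~\ref{cor-number-of-events} and Lemma~\ref{lemma-T}, $\E[T_n^d|\ov{\cF}^d_{n-1}]=\sqrt{2\pi}+O_\P(d^{-1/2})$. A law of large numbers for $S^d_{Nd}/d=\frac1d\sum_{n\le Nd}T_n^d$ then follows from the same $L^2$ martingale-difference bound used in Proposition~\ref{prop-A-C}, giving $S^d_{Nd}/d\to N\sqrt{2\pi}$ in probability. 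Choosing $N>T/\sqrt{2\pi}$ ensures $\{S^d_{Nd}/d>T\}$ has probability tending to one. On this event, $\sup_{t\le T}|B^d_t-\int_0^t b(\ov Y^d_s)\ds|$ is bounded by the supremum over $n\le Nd$ in Proposition~\ref{prop-A-B}, plus one boundary increment. That boundary increment is of size $O_\P(d^{-1})\cdot\sup_{s\le T}|\ov Y^d_s|$, and the supremum is $O_\P(\log d)$ at worst by stationarity and Lemma~\ref{lemma-DeltaYd}. For the second characteristic, the statement of Proposition~\ref{prop-A-C} at the single index $Nd$ does not suffice, so I will upgrade it to a supremum over $n\le Nd$. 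The summands $\Delta N^d_n$ have second moments $O(d^{-2})$, and their conditional means are small by the same computation as in Lemma~\ref{lemma-M}. Doob's inequality \eqref{eq-Doob} then applies verbatim.

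\textbf{Step 2: jumps and the second characteristic.} Since $\ov Y^d$ is pure jump, its true second characteristic is $C^d=0$. We therefore work with the modified characteristic $\wt C^d=\brac{M^d}$, as allowed in \citet[IX.3.48]{Jacod-Shiryaev2003} with truncation function $h$, together with the condition $\int\!\!\int |x|^2 1_{|x|>\epsilon}\,\nu^d(\ds,\dx)\to0$. Each jump $\Delta\ov Y^d_n$ is an integral of $2R^d/\sqrt d$ over a time span $T_n^d$, so $|\Delta\ov Y^d_n|\le 2\sup|R^d|\,T^d_n/\sqrt d$. With fourth-moment bounds from Lemmas~\ref{lemma-T} and \ref{lemma-DeltaYd}, this gives $\sum_{n\le Nd}\E[(\Delta\ov Y^d_n)^4]=O(d\cdot d^{-2})\to0$, which implies the Lindeberg condition. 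The truncated and untruncated second characteristics then coincide asymptotically.

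\textbf{Main obstacle.} I expect the main difficulty to be the time-uniform control in Step~1, since the characteristics are indexed by the random clock $S_n^d/d$ rather than by $n$. In particular, the tightness of $\sup_{t\le T}|\ov Y^d_t|$ enters both the drift error and the jump bounds. I would handle this first through a stopping argument: localize at $\tau_K^d=\inf\{t:|\ov Y^d_t|>K\}$, apply Theorem IX.3.48 to the stopped processes (where all $O_\P$ bounds become uniform), and then remove the localization. This last step is legitimate because the limit OU process does not explode and the localized limits are consistent.
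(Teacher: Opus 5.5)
Your proposal is correct and follows the same route as the paper. Both use \citet[Theorem IX.3.48]{Jacod-Shiryaev2003} with the OU characteristics $b(y)=-\sqrt{2/\pi}\,y$, $c=\sqrt{32/\pi}$, $\nu=0$, stationarity for condition (v), Propositions~\ref{prop-A-B} and~\ref{prop-A-C} for the first and modified second characteristics, and a moment bound with Markov's inequality for the jump conditions (you use fourth moments, the paper third absolute moments). Your extra steps (the law of large numbers for $S^d_{Nd}/d$, the boundary increment when passing from event indices to deterministic times, and the localization) make explicit what the paper leaves implicit. Two small remarks: $[\gamma'_{\mathrm{loc}}\text{-}D]$ only needs convergence at each fixed $t$, which already follows from Proposition~\ref{prop-A-C} by a monotone sandwich $N_1d\le n(dt)\le N_2d$, so your upgrade to a supremum is optional; and the boundary term is really $O_\P(d^{-1}\sqrt{\log d})\sup_{s\le T}\abs{\ov Y^d_s}$, because $\max_{n\le Nd}T^d_{n+1}$ grows like $\sqrt{\log d}$, which is still negligible.
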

\begin{proof}
    The convergence will be established within the framework of \citet[Theorem IX.3.48]{Jacod-Shiryaev2003}. There are six conditions (i)-(vi) to be checked. $b(y)=-(\sqrt{2/\pi})y$ and $c=\sqrt{32/\pi}$, which appeared in Propositions~\ref{prop-A-B} and \ref{prop-A-C}, respectively, are the first and second characteristic of the limiting OU process $Y$, defined in \eqref{eq-Y-FECMC}.
    This fact leaves us with only two conditions, (v) and (vi), to be checked.
    Condition (v) is trivially satisfied from our stationarity assumption.
    Regarding condition (vi), given Propositions~\ref{prop-A-B} and \ref{prop-A-C}, we only need to check the convergence of the third characteristic $\nu^d$. This reduces to proving
    \begin{equation}\label{eq-A-nu}
    \sum_{n=1}^{dN}\P[\abs{\Delta\ov{Y}^d_{S_n}}>\epsilon]\xrightarrow{d\to\infty}0
    \end{equation}
    for every $\epsilon>0$.
    This condition \eqref{eq-A-nu} ensures condition 3.49 of \citet[Theorem IX.3.48]{Jacod-Shiryaev2003} and also condition $[\delta_{\text{loc}}\text{-}D]$ because there exists a choice of $C_1(\R)\subset C_b(\R)$ such that, for every $g\in C_1(\R)$, there exists a number $L>0$ satisfying $g([-L,L])=\{0\}$ and $0\le g(x)\le 1\;(x\in\R)$; see \citet[III.2.7]{Jacod-Shiryaev2003}. For such a function $g$, we have
    \[g*\nu^d_t=\sum_{n=1}^{n(t)}g(\Delta\ov{Y}^d_{S_n})\le\sum_{n=1}^{n(t)}1_{\Brace{\abs{\Delta\ov{Y}^d_{S_n}}>L}}\qquad\therefore\quad\P[g*\nu^d_t]\le\sum_{n=1}^{n(t)}\P[\abs{\Delta\ov{Y}^d_{S_n}}>L],\]
    where $n(t)=\max\{n\ge0\mid S_n\le dt\}$. Taking large enough $N>0$, we can assume $n(t)\le Nd$ with a high probability from Corollary~\ref{cor-number-of-events}. By considering the event $\{n(t)>Nd\}$ separately, it remains to prove \eqref{eq-A-nu}, as in the proof of \citet[Theorem~2.10]{Bierkens+2022}.
    This fact, however, immediately follows from the moment estimate $\E[\abs{\Delta\ov{Y}^d_{S_n}}^3]=O(d^{-3/2})$ implied by Lemma~\ref{lemma-DeltaYd} and Markov's inequality for $p=3$.
\end{proof}

\subsection{Asymptotic Equivalence of the Approximation}\label{sec-appendix-asymptotic-equivalence}

We finalize the proof of Theorem \ref{thm-Y} by proving the asymptotic equivalence of $Y^d$ and $\ov{Y}^d$. The following condition ensures that $Y^d$ and $\ov{Y}^d$ share the same limit; see \citet[Lemma VI.3.31]{Jacod-Shiryaev2003}.

\begin{proposition}[Asymptotic equivalence of $Y^d$ and $\ov{Y}^d$]
    For every positive real number $T>0$,
    \[\sup_{0\le t\le T}\abs{Y^d_t-\ov{Y}^d_t}\xrightarrow[d\to\infty]{\P}0.\]
\end{proposition}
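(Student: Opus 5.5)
Write $t\in[S_n/d,S_{n+1}/d)$. By definition of the skeleton, $Y^d_t-\ov{Y}^d_t=Y^d_t-Y^d_{S_n/d}$. Between events $X^d$ moves linearly, so
\[
Y^d_t-Y^d_{S_n/d}=\frac{1}{\sqrt d}\bigl(\abs{X^d_{dt}}^2-\abs{X^d_{S_n}}^2\bigr)=\frac{2}{\sqrt d}\int_{S_n}^{dt}R^d_s\,\ds .
\]
Since $\abs{V^d}=1$ and $\dot R^d_s=\abs{V^d_s}^2=1$ between jumps, $R^d$ increases linearly on each inter-event interval. Hence
\[
\sup_{0\le t\le T}\abs{Y^d_t-\ov{Y}^d_t}\le\frac{2}{\sqrt d}\max_{1\le n\le n(T)+1}T^d_n\Bigl(\abs{R^d_{S_{n-1}}}+T^d_n\Bigr),
\]
and it suffices to show that this maximum is $o_{\P}(\sqrt d)$.

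First localize the number of events. By Corollary~\ref{cor-number-of-events}, $\E[n(T)]=dT/\sqrt{2\pi}$, so Markov's inequality gives $\P[n(T)+1>Nd]\le T/(N\sqrt{2\pi})+o(1)$, which is small for large $N$. On the complementary event, a union bound gives
\[
\P\Bigl[\max_{n\le Nd}T^d_n(\abs{R^d_{S_{n-1}}}+T^d_n)>\epsilon\sqrt d\Bigr]\le\sum_{n=1}^{Nd}\P\bigl[T^d_n(\abs{R^d_{S_{n-1}}}+T^d_n)>\epsilon\sqrt d\bigr].
\]
Markov's inequality with a $p$th moment, $p>2$, bounds each summand by $\epsilon^{-p}d^{-p/2}\E[(T^d_n(\abs{R^d_{S_{n-1}}}+T^d_n))^p]$. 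The whole sum is then $O(d^{1-p/2})\to0$, provided these moments are bounded uniformly in $n\le Nd$ and $d$. Taking $p=3$ or $p=4$ is enough.

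The main obstacle is this uniform moment bound, because the post-jump radial momentum $R^d_{S_{n-1}}$ is not stationary at event times. For the inter-event time I would use the explicit hazard. The event rate is $(R^d_s)_+$ and $R^d$ grows with slope one, so given $R^d_{S_{n-1}}=r$ we have $\P[T^d_n>u\mid\cdot]\le\exp(-\int_0^u(r+s)_+\ds)$. This bound is Gaussian-tailed and depends only on $r$, so $\E[(T^d_n)^k\mid\cdot]\le C_k(1+r_-^k)$. It therefore suffices to bound $\E[\abs{R^d_{S_{n-1}}}^k]$ uniformly. Immediately after a FECMC jump, $R^d=-w\abs{X^d}$ with $w\sim q^{\tpar,d}$. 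By the Appendix~\ref{sec-appendix-B} asymptotics, $\sqrt d\,w$ is approximately $\chi(2)$, and its moments are uniformly bounded (this is what underlies Lemma~\ref{lemma-T}). Since $w$ is independent of the past, $\E\abs{R^d_{S_n}}^k\le\E[w^k]\,\E[\abs{X^d_{S_n}}^k]$. Under stationarity, $\abs{X^d_{S_n}}^2/d$ concentrates at $1$, so moments of $\abs{X^d}/\sqrt d$ at event times are bounded by Palm-type arguments: integrate against the intensity $(R^d)_+$ and use $\E_{\pi\otimes\mu}[(R^d)_+\abs{X}^k/d^{k/2}]<\infty$ together with the event count. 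Alternatively, I would invoke the conditional moment statements of Lemmas~\ref{lemma-T} and \ref{lemma-DeltaYd}, which already control $T^d_n$ given $\ov{\cF}^d_{n-1}$. If the non-stationarity at event times resists direct control, the fallback is to bound $\sup_{s\le dT}\abs{R^d_s}$ directly from the stationary continuous-time process with a chaining or union bound over $O(d)$ unit intervals, and to bound $\max_n T^d_n$ similarly using the Gaussian-tail hazard. The sup of $\abs{R^d}$ over $O(d)$ time is $O_{\P}(\sqrt{\log d})$, and the max of the $T^d_n$ is $O_{\P}(\sqrt{\log d})$. Their product, $O_{\P}(\log d)$, is $o(\sqrt d)$, which concludes.
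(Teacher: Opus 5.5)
Your proposal is correct and follows the paper's proof essentially step for step: the same pathwise bound $\frac{2}{\sqrt d}\bigl(|R^d_{S_n}|T_{n+1}+T_{n+1}^2\bigr)$ on each inter-event interval, localization of $n(T)\le Nd$ via Corollary~\ref{cor-number-of-events}, and a union bound with Markov's inequality at $p=3$. Your extra care over moments at event times makes explicit a step the paper passes over quickly. The paper appeals only to moments conditional on $\cF^d_{S_n/d-}$, which still depend on $|X^d_{S_n-}|$; your compensator/Palm argument controls the sum over events in $[0,dT]$, and your fallback via $\sup_{s\le dT}|R^d_s|=O_{\P}(\sqrt{\log d})$ avoids event-time moments altogether.
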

\begin{proof}
    The difference between $Y^d$ and $\ov{Y}^d$ in each time interval $t\in[S_n/d,S_{n+1}/d]$ can be bounded as
    \begin{align*}
        \abs{Y_t^d-Y_{S_n/d}^d}&=\frac{1}{\sqrt{d}}\ABs{\abs{X^d_{td}}^2-\abs{X^d_{S_n}}^2}
        \le\frac{2}{\sqrt{d}}\int^{S_{n+1}}_{S_n}\abs{R_s^d}\,\d s=\frac{2}{\sqrt{d}}\Paren{\abs{R^d_{S_{n}}}T_{n+1}+\frac{T_{n+1}^2}{2}},
    \end{align*}
    where the rightmost side no longer depends on the time $t$. Given that
    \[\ov{Y}^d_t-Y^d_t=\sum_{n=0}^\infty \paren{Y^d_{S_n/d}-Y^d_t}1_{\cointerval{\frac{S_n}{d},\frac{S_{n+1}}{d}}}(t),\]
    we are able to bound, for every $\epsilon>0$ as
    \begin{align*}
        \P\Square{\sup_{0\le t\le T}\abs{Y^d_t-\ov{Y}^d_t}>\epsilon}&
        \le\P[n(T)>Nd]+\P\Square{\bigcup_{n=0}^{Nd}\Brace{\sup_{t\in[S_n/d,S_{n+1}/d]}\abs{Y^d_t-Y^d_{S_n/d}}>\epsilon}}\\
        &\le\P[n(T)>Nd]+\sum_{n=0}^{Nd}\P\Square{d^{-1/2}\paren{2\abs{R_{S_n}^d}T_{n+1}+T_{n+1}^2}>\epsilon},
    \end{align*}
    where $n(t)=\max\{n\ge0\mid S_n\le dt\}$.
    In the right-hand side, the first term converges as $d\to\infty$ from Corollary~\ref{cor-number-of-events} and Markov's inequality for $p=1$. The second term also converges by Markov's inequality for $p=3$ and the fact that $R_{S_n}^d$ and $T_{n+1}$ have bounded absolute moments of any order, conditioned on $\cF_{S_{n}/d-}^d$; see Equations~\eqref{eq-RdSn} and \eqref{eq-T} in Section~\ref{sec-appendix-B-T} and the proof of Lemma~\ref{lemma-W}.
\end{proof}

\subsection{Proof of Theorem~\ref{thm-Y-positive-rho}}

Now that Theorem~\ref{thm-Y} is established, Theorem~\ref{thm-Y-positive-rho} can be established along the same lines as in the proof of Theorem~2.10 in \cite{Bierkens+2022}, combined with results in Section~\ref{sec-appendix-B}; hence, we omit the details.

\section{Asymptotics of Auxiliary Quantities}\label{sec-appendix-B}

This section collects several lemmas concerning the asymptotic properties of certain (conditional) distributions related to the jumps of the FECMC process $Z^d$. The results are heavily used in Section~\ref{sec-appendix-A}.

\subsection{Refreshment Distribution}

The distribution $q^{\tpar,d}$, introduced in \eqref{eq-q-tpar}, arises as the length distribution of a refreshed radial component of the velocity $V^d_{S_n}$ at a jump time $S_n$. We need moment estimates to derive further properties concerning jumps of $Z^d$.

\begin{lemma}[Asymptotic Properties of the New Radial Velocity]\label{lemma-W}
    For the random variable $W^d$ with a probability density $q^{\tpar,d}$, it holds that
    \[\E[W^d]=\frac{1}{2}B\paren{\frac{1}{2},\frac{d+1}{2}}=\sqrt{\frac{\pi}{2}}d^{-\frac{1}{2}}+O(d^{-1})\quad(d\to\infty),\qquad \E[(W^d)^2]=\frac{2}{d+1},\]
    \[\qquad\E[(W^d)^4]=\frac{8}{(d+1)(d+3)},\qquad\E[(W^d)^6]=\frac{48}{(d+1)(d+3)(d+5)},\]
    where $B$ is the beta function $B(z,w)=\int^1_0t^{z-1}(1-t)^{w-1}\,\dt$.
\end{lemma}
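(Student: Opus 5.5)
The plan is to compute every moment directly from the density \eqref{eq-q-tpar} by reducing each integral to a Beta integral with the substitution $u=w^2$. For $k\ge0$,
\[
\E[(W^d)^k]=(d-1)\int_0^1 w^{k+1}(1-w^2)^{\frac{d-3}{2}}\,\d w=\frac{d-1}{2}\int_0^1 u^{\frac{k}{2}}(1-u)^{\frac{d-3}{2}}\,\d u=\frac{d-1}{2}B\paren{\tfrac{k}{2}+1,\tfrac{d-1}{2}}.
\]
The case $k=0$ gives $\frac{d-1}{2}\cdot\frac{2}{d-1}=1$, which confirms that $q^{\tpar,d}$ is a probability density (for $d\ge2$).

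For the even moments $k=2m$, the Beta function becomes a ratio of Gamma functions. Using $\Gamma(a+1)=a\Gamma(a)$,
\[
\frac{d-1}{2}\,\frac{\Gamma(m+1)\Gamma(\frac{d-1}{2})}{\Gamma(m+\frac{d+1}{2})}=\frac{m!}{\prod_{j=1}^{m}\paren{\frac{d-1}{2}+j}}=\frac{2^m m!}{\prod_{j=1}^m(d-1+2j)}.
\]
Taking $m=1,2,3$ yields $\frac{2}{d+1}$, $\frac{8}{(d+1)(d+3)}$ and $\frac{48}{(d+1)(d+3)(d+5)}$, which are exactly the stated values.

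For $k=1$ we get
\[
\E[W^d]=\frac{d-1}{2}B\paren{\tfrac32,\tfrac{d-1}{2}}=\frac{d-1}{2}\frac{\Gamma(\frac32)\Gamma(\frac{d-1}{2})}{\Gamma(\frac{d+2}{2})}.
\]
We now use $\Gamma(\frac32)=\frac12\Gamma(\frac12)$ and $\frac{d-1}{2}\Gamma(\frac{d-1}{2})=\Gamma(\frac{d+1}{2})$. This rewrites the expression as
\[
\frac12\frac{\Gamma(\frac12)\Gamma(\frac{d+1}{2})}{\Gamma(\frac{d+2}{2})}=\frac12 B\paren{\tfrac12,\tfrac{d+1}{2}},
\]
which is the stated identity.

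The only step that is not pure algebra is the asymptotic expansion. We write $\frac12 B(\frac12,\frac{d+1}{2})=\frac{\sqrt\pi}{2}\,\Gamma(\frac{d+1}{2})/\Gamma(\frac{d+2}{2})$ and apply the standard Gamma-ratio asymptotic $\Gamma(x)/\Gamma(x+\frac12)=x^{-1/2}(1+O(x^{-1}))$ with $x=\frac{d+1}{2}$. This can be justified by Stirling's formula, or elementarily via Wendel's/Gautschi's inequality $x^{-1/2}\le\Gamma(x)/\Gamma(x+\frac12)\le(x-\frac12)^{-1/2}$. It gives
\[
\frac{\sqrt\pi}{2}\sqrt{\tfrac{2}{d+1}}\,(1+O(d^{-1}))=\sqrt{\tfrac{\pi}{2}}\,d^{-1/2}+O(d^{-3/2}).
\]
This is stronger than the claimed $O(d^{-1})$ remainder, so the claim follows. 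I expect no real obstacle here; the only care needed is bookkeeping of the half-integer Gamma shifts.
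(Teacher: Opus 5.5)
Your proof is correct. The substitution $u=w^2$ gives $\E[(W^d)^k]=\frac{d-1}{2}B(\frac k2+1,\frac{d-1}{2})$. The Gamma shifts then produce both the product formula and the identity $\E[W^d]=\frac12B(\frac12,\frac{d+1}{2})$. The bounds $x^{-1/2}\le\Gamma(x)/\Gamma(x+\frac12)\le(x-\frac12)^{-1/2}$ also hold; both follow from log-convexity of $\Gamma$.

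For the mean, your argument is essentially the paper's. The paper reaches the same Beta integral by integrating by parts to $\int_0^1(1-x^2)^{(d-1)/2}\,\mathrm{d}x$, then uses Stirling's series for the Gamma ratio.

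For the even moments, you take a different route. The paper computes the distribution function $1-(1-v^2)^{(d-1)/2}$ and uses the inverse-transform representation $W^d\deq\sqrt{1-(1-U)^{2/(d-1)}}$ with $U$ uniform on $[0,1]$. It then obtains $\E[(W^d)^{2m}]$ for $m=1,2,3$ by binomially expanding $(1-(1-U)^{2/(d-1)})^m$ and using $\E[(1-U)^a]=1/(a+1)$. This is elementary and gives a sampler for $W^d$ as a by-product, but the expansion has to be redone, with more terms, for each moment.

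Your single Beta formula handles all orders at once. It gives $\E[(W^d)^{2m}]=2^m m!/\prod_{j=1}^m(d-1+2j)$ for every $m$, hence $\E[(\sqrt d\,W^d)^{2m}]\to 2^m m!=\E[\tau^{2m}]$ with $\tau\sim\chi(2)$. This is exactly the moment convergence and uniform integrability for all $p$ that the proof of Lemma~\ref{Lemma-wtmu-convergence} needs. That proof only asserts it ``can be checked similarly'' for $p\ge6$.

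Your inequality route also gives a non-asymptotic sandwich, $\sqrt{\pi/2}\,(d+1)^{-1/2}\le\E[W^d]\le\sqrt{\pi/2}\,d^{-1/2}$. So the remainder is actually $O(d^{-3/2})$. The paper's Stirling expansion yields the same order, although the paper records only $O(d^{-1})$.
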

\begin{proof}
    Integrating the density $q^{\tpar,d}(v)$, we obtain a simple expression for the corresponding distribution function
    \[F^{\tpar,d}(v)=1-(1-v^2)^{\frac{d-1}{2}}.\]
 Therefore, the second, fourth, and sixth moments can be computed from the fact that, for a uniform random variable $U$ on $[0,1]$,
    \[(F^{\tpar,d})^{-1}(U)=\sqrt{1-(1-U)^{\frac{2}{d-1}}}\deq W,\]
    known as the inverse function method.
    Hence, given $\E[U^k]=(k+1)^{-1}\;(k=0,1,2,\cdots)$, we obtain
    \[\E[(W^d)^2]=1-\E[U^{\frac{2}{d-1}}]=1-\frac{1}{\frac{2}{d-1}+1}=\frac{2}{d+1},\]
    \[\E[(W^d)^4]=1-2\E[(1-U)^{\frac{2}{d-1}}]+\E[(1-U)^{\frac{4}{d-1}}]=\frac{8}{(d+1)(d+3)},\]
    \[\E[(W^d)^6]=1-3\E[(1-U)^{\frac{2}{d-1}}]+3\E[(1-U)^{\frac{4}{d-1}}]-\E[(1-U)^{\frac{6}{d-1}}]=\frac{48}{(d+1)(d+3)(d+5)}.\]
    For the mean of $W^d$, a direct calculation reveals
    \begin{align*}
        \E[W^d]&=\int^1_0xq^{\tpar,d}(x)\,\d x=-\int^1_0x\Paren{(1-x^2)^{\frac
        {d-1}{2}}}'\,\d x\\
        &=\int^1_0(1-x^2)^{\frac{d-1}{2}}\,\d x=\frac{1}{2}B\paren{\frac{1}{2},\frac{d+1}{2}}=\frac{\sqrt{\pi}}{2}\frac{\Gamma\paren{\frac{d+1}{2}}}{\Gamma\paren{\frac{d}{2}+1}}.
    \end{align*}
    The result follows from an asymptotic result regarding the ratio of two Gamma function values
    \[\frac{\Gamma(z+\alpha)}{\Gamma(z+\beta)}=z^{\alpha-\beta}\Paren{1+\frac{(\alpha-\beta)(\alpha+\beta-1)}{2z}+O(\abs{z}^{-2})},\qquad(\abs{z}\to\infty),\]
    which follows from Stirling's series, as presented in \cite{Erdelyi-Tricomi1951}.
\end{proof}

\begin{proposition}[Convergence to the Standard Rayleigh Distribution]\label{prop-convergence-to-Rayleigh}
    The law of $\sqrt{d}W^d$ converges in total variation to $\chi(2)$.
\end{proposition}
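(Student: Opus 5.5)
The plan is to compute the density of $\sqrt{d}W^d$ explicitly and invoke Scheffé's lemma. Pointwise convergence of densities to the $\chi(2)$ density $xe^{-x^2/2}1_{(0,\infty)}(x)$ then gives $L^1$ convergence of densities, which is total variation convergence.

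\textbf{Step 1: the density.} By a change of variables in \eqref{eq-q-tpar}, $\sqrt{d}W^d$ has density
\[p_d(x)=\frac{1}{\sqrt{d}}q^{\tpar,d}\paren{\frac{x}{\sqrt{d}}}=\frac{d-1}{d}\,x\paren{1-\frac{x^2}{d}}^{\frac{d-3}{2}}1_{(0,\sqrt{d})}(x).\]

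\textbf{Step 2: pointwise limit.} For each fixed $x>0$, we have $x<\sqrt{d}$ eventually, and
\[\frac{d-3}{2}\log\paren{1-\frac{x^2}{d}}\to-\frac{x^2}{2},\qquad \frac{d-1}{d}\to1.\]
Hence $p_d(x)\to xe^{-x^2/2}$, while for $x\le0$ both densities vanish.

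\textbf{Step 3: Scheffé.} Both $p_d$ and the limit are probability densities: $p_d$ integrates to one since $q^{\tpar,d}$ does, as can be seen from $F^{\tpar,d}(1)=1$ in the proof of Lemma~\ref{lemma-W}. Scheffé's lemma therefore yields $\int\abs{p_d-p}\,\dx\to0$, i.e.\ convergence in total variation.

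The only point requiring care is Step 3's hypothesis that the $p_d$ are genuinely normalized densities, which holds for every $d\ge2$. As an alternative, one can use dominated convergence via the bound $\paren{1-x^2/d}^{(d-3)/2}\le e^{-x^2(d-3)/(2d)}\le e^{-x^2/6}$ for $d\ge 9$, which dominates $p_d$ by the integrable function $xe^{-x^2/6}$. There is no real obstacle; the argument is elementary.
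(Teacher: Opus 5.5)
Your proposal is correct and follows the paper's proof: compute the density of $\sqrt{d}W^d$ by change of variables and show that it converges pointwise to $xe^{-x^2/2}1_{(0,\infty)}(x)$. You are slightly more careful than the paper, which leaves the Scheffé step implicit and omits the factor $1/d$ in its displayed density; your prefactor $\frac{d-1}{d}$ is the correct one.
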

\begin{proof}
    The density of $\sqrt{d}W^d$ is $q^{\tpar,d}(y/\sqrt{d})/\sqrt{d}$. This converges to
    \[\frac{q^{\tpar,d}(y/\sqrt{d})}{\sqrt{d}}=(d-1)x\paren{1-\frac{x^2}{d}}^{\frac{d-3}{2}}1_{(0,1)}\paren{\frac{x}{\sqrt{d}}}\xrightarrow{d\to\infty}xe^{-\frac{x^2}{2}}1_{(0,\infty)}(x)\]
    for all $x\in\R$.
\end{proof}

\subsection{Moments of Jump Intervals}\label{sec-appendix-B-T}

We calculate the first and second conditional moments of the jump intervals $T_{n+1}\coloneq S_{n+1}-S_{n}$, given the value of $X_{S_{n}}$ (or equivalently $Y_{S_{n}/d}$). This jump arrival time $T_{n+1}$ is determined as follows. Observe first that the FECMC rate, substituting $\rho=0$ and $U(x)=U^d(x)=\abs{x}^2/2$ in \eqref{eq-lambda}, equals the positive part of $R^d_t=(X^d_t|V^d_t)$, the radial momentum \eqref{eq-Rd}. Note that the time derivative of $R^d_t$ is constant $1$ between jumps, as in our case $\abs{V^d_t}\equiv1$. Therefore, $R^d_t$ is again a piecewise deterministic process. When a jump occurs, a new position is determined as
\begin{equation}\label{eq-RdSn}
    R^d_{S_n}\overset{d}{=}-\abs{X^d_{S_n-}}W^d,\qquad n=1,2,\cdots,
\end{equation}
where $W^d$ is the random variable, independent of $X^d_{S_n-}$, with a density $q^{\tpar,d}$. We calculated the moments of $W^d$ in Lemma \ref{lemma-W}. Conditioned on $S_n$ and $X_{S_n-}^d$, no jump occurs between $t\in [S_n,S_n+\abs{X^d_{S_n-}}W^d]$ as $(R^d_{t})_+=0$ on this time interval. Once $t>S_n+\abs{X_{S_n-}^d}W^d$ holds, $R^d_t$ becomes positive, so a jump may occur with intensity $R_t^d$. Overall, the distribution of $T_{n+1}$ can be represented by
\begin{equation}\label{eq-T}
    T_{n+1}\overset{d}{=}\abs{X^d_{S_{n}-}}W^d+\tau,
\end{equation}
where the random variable $\tau\sim\chi(2)$, independent of $X^d_{S_{n}-}$ and $W^d$, follows a Rayleigh distribution with the scale parameter $1$, satisfying
\begin{equation}\label{eq-tau}
    \E[\tau]=\sqrt{\frac{\pi}{2}},\qquad\E[\tau^2]=2,\qquad\E[\tau^4]=8,\qquad\E[\tau^6]=64.
\end{equation}

\begin{lemma}[Moments of Jump Intervals]\label{lemma-T}
    Conditioned on the filtration $(\cF_t^d)$ generated by $Y^d$, we have
    \[\E[T_n|\cF^d_{S_{n-1}/d-}]=\sqrt{2\pi}+O_{\P}(d^{-\frac{1}{2}}),\qquad(d\to\infty),\]
    \[\E[T^2_n|\cF^d_{S_{n-1}/d-}]=4+\pi+O_{\P}(d^{-\frac{1}{2}}),\qquad(d\to\infty).\]
\end{lemma}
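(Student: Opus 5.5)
The plan is to start from the distributional representation \eqref{eq-T}, $T_n\deq\abs{X^d_{S_{n-1}-}}W^d+\tau$, with $W^d\sim q^{\tpar,d}$ and $\tau\sim\chi(2)$ mutually independent and independent of $X^d_{S_{n-1}-}$. Since $\abs{X^d_{S_{n-1}-}}=\abs{X^d_{S_{n-1}}}$ (positions are continuous), the conditioning $\sigma$-field $\cF^d_{S_{n-1}/d-}$ determines $\abs{X^d_{S_{n-1}}}^2=\sqrt d\,(Y^d_{S_{n-1}/d}+\sqrt d)=d+\sqrt d\,Y^d_{S_{n-1}/d}$. Moreover, the fresh variables $W^d,\tau$ generating the $n$-th interval are independent of this past. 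Therefore the conditional moments are the unconditional moments of $W^d,\tau$ with $\abs{X}$ frozen. Writing $\xi\coloneq\abs{X^d_{S_{n-1}}}$, the two conditional moments are
\[
\E[T_n\mid\cdot]=\xi\,\E[W^d]+\E[\tau],\qquad
\E[T_n^2\mid\cdot]=\xi^2\E[(W^d)^2]+2\xi\,\E[W^d]\E[\tau]+\E[\tau^2].
\]

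Next insert Lemma~\ref{lemma-W} and \eqref{eq-tau}, and expand $\xi=\sqrt d\,(1+Y/\sqrt d)^{1/2}=\sqrt d+O_\P(1)\cdot Y/2+\dots$, where $Y=Y^d_{S_{n-1}/d}$. For the first moment this gives
\[
\xi\E[W^d]=\bigl(\sqrt d+O_\P(1)\bigr)\bigl(\sqrt{\pi/2}\,d^{-1/2}+O(d^{-1})\bigr)=\sqrt{\pi/2}+O_\P(d^{-1/2}).
\]
Adding $\E[\tau]=\sqrt{\pi/2}$ yields $\sqrt{2\pi}$.

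For the second moment, the three terms are:
\[
\xi^2\cdot\tfrac{2}{d+1}=\tfrac{2(d+\sqrt d Y)}{d+1}=2+O_\P(d^{-1/2}),\qquad
2\xi\E[W^d]\E[\tau]=2\cdot\tfrac{\pi}{2}+O_\P(d^{-1/2})=\pi+O_\P(d^{-1/2}),\qquad
\E[\tau^2]=2.
\]
They sum to $4+\pi$.

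The main obstacle is making the $O_\P$ statements legitimate, which requires $Y^d_{S_{n-1}/d}=O_\P(1)$. This is not immediate from stationarity, because an event-time (Palm-type) sample of $Z^d$ is not distributed as $\pi^d\otimes\mu^d$. My first attempt would be to bound uniformly in $d$: under stationarity, $Y^d_t$ is tight ($\chi^2_d$ CLT), and the event-time law is the stationary law reweighted by the rate $(R^d)_+$. Since $R^d=(X|V)$ is approximately $N(0,1)$ independently of $\abs X$ under stationarity, with bounded second moment, Cauchy--Schwarz gives tightness of $Y^d$ at event times. Alternatively, I could avoid Palm theory and use the elementary bound $\abs{\xi-\sqrt d}\le\abs{Y}$, stating the error as $O(d^{-1/2})\cdot(1+\abs{Y})$, which is $O_\P(d^{-1/2})$ once tightness holds. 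I would also note that $\xi\ge0$ makes the square-root expansion harmless, via $\abs{\sqrt{a}-\sqrt b}\le\abs{a-b}/\sqrt b$ with $b=d$.
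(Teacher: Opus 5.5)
Your proposal is correct and is essentially the paper's proof. Both substitute $\abs{X^d_{S_{n-1}}}^2=d+\sqrt d\,Y^d_{S_{n-1}/d-}$ into $T_n\deq\abs{X^d_{S_{n-1}-}}W^d+\tau$ from \eqref{eq-T} and read off the moments from Lemma~\ref{lemma-W} and \eqref{eq-tau}; the paper simply asserts $Y^d_{S_{n-1}/d-}=O_{\P}(1)$. Your added tightness argument is welcome, but the $n$-th event from a stationary start is not distributed according to the rate-reweighted (Palm) law. It is cleaner to use the compensator identity $\E\bigl[\sum_{k\ge1:\,S_k\le t}1_{\{\abs{Y^d_{S_k/d}}>K\}}\bigr]=t\,\E\bigl[(R^d_0)_+1_{\{\abs{Y^d_0}>K\}}\bigr]\le t\,\P(\abs{Y^d_0}>K)^{1/2}$ together with tightness of $S_{n-1}$.
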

\begin{proof}
    As $Y^d_{S_{n-1}/d-}=y$ corresponds to $\abs{X^d_{S_{n-1}}}^2=y\sqrt{d}+d$ by definition \eqref{eq-Yd}, substituting this into \eqref{eq-T} results in
    \begin{align*}
        \E[T_n|Y^d_{S_{n-1}/d-}=y]&=\E[W^d]\sqrt{y\sqrt{d}+d}+\E[\tau]\\
        &=\sqrt{\frac{\pi}{2}}\sqrt{\frac{y}{\sqrt{d}}+1}+O(d^{-\frac{1}{2}})+\sqrt{\frac{\pi}{2}},
    \end{align*}
    where we used Lemma \ref{lemma-W} and Equation \eqref{eq-tau}.
    The first equation follows immediately, given $Y^d_{S_{n-1}/d-}=O_{\P}(1)$ as $d\to\infty$ for all $n\ge1$. Similarly, the second statement follows from that
    \begin{align*}
        \E[T_n^2|Y^d_{S_{n-1}/d-}=y]&=\paren{\frac{y}{\sqrt{d}}+1}d\E[(W^d)^2]+
        2\E[W^\d\tau]\sqrt{y\sqrt{d}+d}+\E[\tau^2]\\
        &=\paren{\frac{y}{\sqrt{d}}+1}2\cdot\frac{d}{d+1}+2\paren{\frac{\pi}{2}+O(d^{-\frac{1}{2}})}+2.
    \end{align*}
\end{proof}

\subsection{Moments of Skeleton Increments}

In Section \ref{sec-appendix-skeleton-chain}, the dual predictable projection $B^d$ of the skeleton process $\ov{Y}^d$ is proved to be a jump process with increments $\E[\Delta\ov{Y}^d_n|\ov{\cF}^d_{n-1}]$, whose properties we are going to investigate. We first observe that the radial momentum $R^d$ is the time derivative of the potential process $U^d(X^d_t)=\abs{X^d_t}^2/2$. Using this, we have
\begin{align*}
    \Delta\ov{Y}^d_n&=Y^d_{S_n/d}-Y^d_{S_{n-1}/d}=\frac{1}{\sqrt{d}}\Paren{\abs{X^d_{S_n}}^2-\abs{X^d_{S_{n-1}}}^2}\\
    &=\frac{1}{\sqrt{d}}\int^{S_n}_{S_{n-1}}2R^d_{t}\,\dt=\frac{2}{\sqrt{d}}\Paren{R^d_{S_{n-1}}T_n+\frac{T_n^2}{2}}.
\end{align*}
Observe that the quantities $R^d_t,T_n$ appearing in the rightmost expression have already been studied in Equation \eqref{eq-RdSn} and Lemma \ref{lemma-T} respectively.

\begin{lemma}[Moments of Skeleton Increments]\label{lemma-DeltaYd}
    \[\E[\Delta\ov{Y}^d_n|\cF^d_{S_{n-1/d-}}]=-\frac{2}{d+1}Y^d_{\frac{S_{n-1}}{d}-}+O_{\P}(d^{-\frac{3}{2}})\quad(d\to\infty),\]
    \[\E[(\Delta\ov{Y}^d_n)^2|\cF^d_{S_{n-1/d-}}]=\frac{8}{d}+O_{\P}(d^{-\frac{3}{2}}),\qquad\qquad(d\to\infty),\]
    \[\E[(\Delta\ov{Y}^d_n)^3|\cF^d_{S_{n-1/d-}}]=\frac{16}{d^{3/2}}+O_{\P}(d^{-2}),\qquad\qquad(d\to\infty).\]
\end{lemma}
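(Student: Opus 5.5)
We start from the identity derived just above,
\[\Delta\ov{Y}^d_n=\frac{2}{\sqrt{d}}\Paren{R^d_{S_{n-1}}T_n+\frac{T_n^2}{2}},\]
and the representations \eqref{eq-RdSn} and \eqref{eq-T}. Set $r\coloneq \abs{X^d_{S_{n-1}-}}$, which is measurable with respect to the conditioning $\sigma$-field, and note $r^2=d+y\sqrt{d}$ with $y=Y^d_{S_{n-1}/d-}$. Conditionally we may write $R^d_{S_{n-1}}=-rW$ and $T_n=rW+\tau$, with $W\sim q^{\tpar,d}$ and $\tau\sim\chi(2)$ independent of each other and of $r$. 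Substituting, the increment collapses to
\[\Delta\ov{Y}^d_n=\frac{2}{\sqrt{d}}\Paren{-rW(rW+\tau)+\frac{(rW+\tau)^2}{2}}=\frac{1}{\sqrt{d}}\paren{\tau^2-r^2W^2}.\]
This identity also has a direct interpretation. The potential rises by $\tau^2/2$ over the positive excursion, and it falls by $r^2W^2/2$ during the initial deterministic stretch on which $R$ climbs from $-rW$ to $0$. The whole proof then reduces to computing moments of $\tau^2-r^2W^2$ using Lemma~\ref{lemma-W} and \eqref{eq-tau}.

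\emph{First moment.} We have $\E[\tau^2]=2$ and $r^2\E[W^2]=2r^2/(d+1)$. Hence
\[\E[\Delta\ov{Y}^d_n\mid\cdot]=\frac{1}{\sqrt{d}}\cdot\frac{2(d+1)-2d-2y\sqrt{d}}{d+1}=-\frac{2y}{d+1}+\frac{2}{\sqrt{d}(d+1)}.\]
The last term is $O(d^{-3/2})$, which gives the first claim exactly. We still have to check that $y$ here is the left limit $Y^d_{S_{n-1}/d-}$. This holds because the position is continuous, so $r=\abs{X^d_{S_{n-1}}}$.

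\emph{Second moment.} We have $d\,\E[(\Delta\ov{Y}^d_n)^2\mid\cdot]=\E[\tau^4]-2\E[\tau^2]r^2\E[W^2]+r^4\E[W^4]$. Plugging in $8$, $2$, $2/(d+1)$ and $8/((d+1)(d+3))$, and using $r^2=d(1+y/\sqrt{d})$, this equals
\[8-8(1+y/\sqrt{d})+8(1+y/\sqrt{d})^2+O(d^{-1})=8+O_{\P}(d^{-1/2}).\]
Dividing by $d$ gives $8/d+O_{\P}(d^{-3/2})$.

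\emph{Third moment.} Similarly, $d^{3/2}\,\E[(\Delta\ov{Y}^d_n)^3\mid\cdot]$ equals
\[\E[\tau^6]-3\E[\tau^4]r^2\E[W^2]+3\E[\tau^2]r^4\E[W^4]-r^6\E[W^6].\]
The leading orders are $64-3\cdot8\cdot2+3\cdot2\cdot8-48=16$, and every correction term carries a factor $y/\sqrt{d}$ or $1/d$. This gives $16/d^{3/2}+O_{\P}(d^{-2})$.

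The only step needing care is the uniformity of the $O_{\P}$ terms. The remainders are polynomials in $y/\sqrt{d}$ with bounded coefficients, and $y=Y^d_{S_{n-1}/d-}$ is tight uniformly in $n$ by stationarity and the Gaussian concentration of $\abs{X}^2$. Stationarity gives the bound at fixed times, but along jump times we need a further argument. For that we would invoke the Palm/event-stationary version, or simply bound $\E\abs{Y}^k$ via $\E\abs{X^d_{S_{n-1}}}^{2k}$, which is controlled because the jump rate is bounded in expectation (Corollary~\ref{cor-number-of-events}). This is the main, though mild, obstacle. The algebra itself is routine once the collapsed identity for $\Delta\ov{Y}^d_n$ is noticed.
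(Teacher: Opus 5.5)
Your reduction is the paper's own. You collapse the increment to $\Delta\ov{Y}^d_n=d^{-1/2}(\tau^2-r^2W^2)$ using \eqref{eq-RdSn} and \eqref{eq-T}, then expand moments with Lemma~\ref{lemma-W} and \eqref{eq-tau}. The first two claims come out correctly this way.

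The third claim, however, does not follow, and it is in fact false. Your arithmetic $64-3\cdot8\cdot2+3\cdot2\cdot8-48=16$ takes $\E[\tau^6]=64$ from \eqref{eq-tau}. But for $\tau\sim\chi(2)$ the variable $\tau^2/2$ is exponential with mean $1$, so $\E[\tau^6]=8\cdot3!=48$. Lemma~\ref{lemma-W} already tells you this: $d^3\E[(W^d)^6]\to48$, while $\sqrt{d}W^d\to\chi(2)$ with convergent moments. With the correct value, the coefficient of $d^{-3/2}$ is $48-48+48-48=0$.

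This cancellation is forced by symmetry. The term $r^2W^2=(1+y/\sqrt{d})(\sqrt{d}W)^2$ converges in law to $\tau'^2$ for an independent copy $\tau'$ of $\tau$. Hence $\sqrt{d}\,\Delta\ov{Y}^d_n\Rightarrow\tau^2-\tau'^2$, which has a symmetric law. Keeping the next order in your own expansion gives
\[\E[(\Delta\ov{Y}^d_n)^3\mid\cF^d_{S_{n-1}/d-}]=-\frac{96}{d^{2}}\,Y^d_{\frac{S_{n-1}}{d}-}+O_{\P}(d^{-\frac{5}{2}}).\]
So what can actually be proved is $O_{\P}(d^{-2})$, not $16d^{-3/2}+O_{\P}(d^{-2})$. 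The paper's proof makes the same slip: it writes $-48+48-48+64$.

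Nothing downstream depends on the constant $16$. The only use of the third moment is condition \eqref{eq-A-nu}, which needs the absolute bound $\E[\abs{\Delta\ov{Y}^d_n}^3]=O(d^{-3/2})$. A signed third moment could not deliver that anyway. The absolute bound follows at once from $\abs{\tau^2-r^2W^2}^3\le4(\tau^6+r^6W^6)$ and the moments you already have. You should replace your third computation by this absolute-moment bound, or by the corrected $O_{\P}(d^{-2})$ statement.

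A minor point: the representation $R^d_{S_{n-1}}=-rW$ holds only for $n\ge2$. For $n=1$, $S_0=0$ is not an event time and $R^d_0$ is a stationary draw. This single term is harmless for the limit theorem.
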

\begin{proof}
    We first express $\Delta\ov{Y}^d_n$ by $R^d_{S_{n-1}}$ and $T_n$. Then substitute \eqref{eq-RdSn} and \eqref{eq-T} within the expectation $\E_y[\cdot]\coloneq \E[\cdot|Y^d_{S_{n-1/d-}}=y]$ to obtain
    \[\Delta\ov{Y}^d_n=\frac{1}{\sqrt{d}}\Paren{-\abs{X_{S_{n-1}}^d}^2(W^d)^2+\tau^2}.\]
    We can proceed as
    \begin{align*}
        \E_y[\Delta\ov{Y}^d_n]&=-\frac{1}{\sqrt{d}}\E[(W^d)^2]\E_y[\abs{X_{S_{n-1}}^d}^2]+\frac{1}{\sqrt{d}}\E[\tau^2]\\
        &=-\frac{1}{\sqrt{d}}\frac{2}{d+1}(y\sqrt{d}+d)+\frac{2}{\sqrt{d}}=-\frac{2}{d+1}y+\frac{2}{\sqrt{d}}\frac{1}{d+1},
    \end{align*}
    where we used the fact that $W^d$ and $\tau$ are independent, the moment estimates of Lemma \ref{lemma-W} and \eqref{eq-tau}, and that
    \[\E[\abs{X^d_{S_{n}}}^2|Y^d_{S_n/d-}=y]=y\sqrt{d}+d.\]
    For the second result, we proceed similarly,
    \begin{align*}
        \E_y[(\Delta\ov{Y}^d_n)^2]&=\frac{\E_y[\abs{X_{S_{n-1}}^d}^4(W^d)^4]}{d}-\frac{2}{d}\E_y[\abs{X_{S_{n-1}}^2}(W^d)^2\tau^2]+\frac{\E[\tau^4]}{d}\\
        &=\frac{8}{d}\frac{d^2}{(d+1)(d+3)}\paren{1+\frac{y}{\sqrt{d}}}^2-\frac{8}{d+1}\paren{1+\frac{y}{\sqrt{d}}}+\frac{8}{d}.
    \end{align*}
    Lastly, we have
    \begin{align*}
        \E_y[(\Delta\ov{Y}^d_n)^3]&=\frac{1}{d^{3/2}}\E_y\Square{\paren{-\abs{X_{S_{n-1}}^d}^2(W^d)^2+\tau^2}^3}\\
        &=d^{-\frac{3}{2}}\Paren{-\E[(W^d)^6](y\sqrt{d}+d)^3+3\E[(W^d)^4]\E[\tau^2](y\sqrt{d}+d)^2\\
        &\qquad-3\E[(W^d)^2]\E[\tau^4](y\sqrt{d}+d)+\E[\tau^6]}\\
        &=d^{-\frac{3}{2}}\Paren{-48+48-48+64+O_{\P}(d^{-1/2})}.
    \end{align*}
\end{proof}

\section{Convergence of the Radial Momentum}\label{sec-appendix-C}

The proof proceeds along the same lines as in Section~\ref{sec-appendix-A}. However, this time $R^d$ possesses a special structure. By focusing on the jump measure $\mu^d$ and its dual predictable projection $\wt{\mu}^d$, first introduced by \cite{Jacod1975}, we are able to obtain much simpler proofs.

\subsection{Convergence of the Third Characteristic}

Let $(M^d_t)_{t\ge0}$ be a time-homogeneous Poisson point process on $(\R_+)^3$ with its intensity measure given by $\ell_+^{\otimes2}\otimes q^{\tpar,d}(v)\,\d v$, where $\ell_+$ is the Lebesgue measure on $\R_+$. The radial momentum process $R^d$ can be expressed as a stochastic integral with respect to $M^d$:
\begin{equation}\label{eq-Rd-integral}
R^d_t=R^d_0+t+\int_{\ocinterval{0,t}\times(\R_+)^2}1_{[0,R^d_{s-}]}(\lambda)(-\abs{X_s^d}v-R^d_{s-})M^d(\d s\d\lambda\d v).
\end{equation}
This integral representation immediately provides us with the canonical semimartingale decomposition; in fact, it is a Doob--Meyer decomposition since $R^d$ is also a quasimartingale. This can be written as $R^d_t={}^\cP\!R^d_t+M^d_t$, where
\[{}^\cP\!R^d_t\coloneq t-\int^t_0\frac{\abs{X_s^d}}{\sqrt{d}}\E[\sqrt{d}W^d](R_{s-}^d)_+\,\d s-\int^t_0(R_{s-}^d)^2\,\d s\]
is the dual predictable projection of $R^d$. Here, $W^d$ is a random variable with density $q^{\tpar,d}$; see Lemma \ref{lemma-W}.

A key observation in analyzing $R^d$ is that its jump measure $\mu^d$, defined by
\[\mu^d(\dt\du)\coloneq \sum_{s\in\R_+}1_{\Brace{\Delta R^d_s\ne0}}\delta_{(s,\Delta X_s)}(\d t\d u)\]
has the dual predictable projection $\wt{\mu}^d$ with respect to the natural filtration of $R^d$, defined by
\begin{equation}\label{eq-wtmu}
    \wt{\mu}^d(\dt\du)\coloneq \int_{\R_+^2}1_{[0,R_{s-}^d]}(\lambda)\delta_{\Brace{-\abs{X_s^d}v-R^d_{s-}}}(\du)\dt\d\lambda q^{\tpar,d}(v)\,\d v.
\end{equation}
We can express the first characteristic ${}^{\cP}\!R^d$ and the modified second characteristic $\brac{M^d}$ solely through the third characteristic $\wt{\mu}^d$ and test functions $f(u)=u,u^2$:
\begin{align}\label{eq-B-by-wtmu}
    u*\wt{\mu}^d_t&=\int_0^tu\wt{\mu}^d(\ds\du)={}^\cP\!R^d_t-t,\\\label{eq-C-by-wtmu}
    u^2*\wt{\mu}^d_t&=\int_0^tu^2\wt{\mu}^d(\ds\du)=\brac{M^d}_t.
\end{align}
Here, $*$ denotes integration with respect to a random measure, as in \citet[Section~II.1]{Jacod-Shiryaev2003}. Therefore, the convergence of the local characteristics can be derived simultaneously from the following lemma. Overall, Proposition~\ref{prop-R} essentially follows from the convergence of the jump distribution, i.e., Proposition~\ref{prop-convergence-to-Rayleigh}.

\begin{lemma}[Strong Convergence of the Third Characteristic]\label{Lemma-wtmu-convergence}
    Let $\wt{\mu}_t$ denote the third characteristic of the limiting process $R_F$ in Proposition~\ref{prop-R}. Then, the convergence
    \[\sup_{0\le t\le T}\abs{f*\wt{\mu}_t^d-f*\wt{\mu}_t\circ R^d}\xrightarrow[d\to\infty]{\P}0\]
    holds for any Lipschitz continuous function $f$ and $f(u)=u^p$ with $p=1,2,\cdots$.
    Here, the underlying probability space is the Skorokhod space $\Omega\coloneq D(\R)$, and $R^d,R_F$ are considered
    to be maps $\Omega\to\Omega$; hence, a composition operator $\circ$ is well-defined.
\end{lemma}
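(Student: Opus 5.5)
The plan is to write both compensators explicitly as time integrals along the path and compare the integrands pointwise. By \eqref{eq-wtmu}, for any test function $f$,
\[f*\wt{\mu}^d_t=\int_0^t (R^d_{s-})_+\,\E\Square{f\paren{-\abs{X^d_s}W^d-R^d_{s-}}}\,\ds,\]
where $W^d\sim q^{\tpar,d}$ is independent of everything else. The limiting generator \eqref{eq-L_F} gives, evaluated along the path $R^d$,
\[f*\wt{\mu}_t\circ R^d=\int_0^t (R^d_{s-})_+\,\E\Square{f(-\tau-R^d_{s-})}\,\ds,\qquad \tau\sim\chi(2).\]
So the difference is bounded by
\[\int_0^T (R^d_{s-})_+\,\Abs{\E f\paren{-\tfrac{\abs{X^d_s}}{\sqrt d}\sqrt d W^d-r}-\E f(-\tau-r)}_{r=R^d_{s-}}\,\ds,\]
and the supremum over $t\le T$ is controlled by this single integral.

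For Lipschitz $f$ with constant $K$, I would couple $\sqrt d W^d$ and $\tau$ through the quantile transform, using the explicit distribution function $F^{\tpar,d}$ from Lemma~\ref{lemma-W}. The inner difference is then at most
\[K\,\E\abs{a_s\sqrt dW^d-\tau}\le K\abs{a_s-1}\,\E[\sqrt dW^d]+K\,\E\abs{\sqrt dW^d-\tau},\qquad a_s\coloneq \abs{X^d_s}/\sqrt d .\]
The second term tends to $0$ because, by Proposition~\ref{prop-convergence-to-Rayleigh} together with the uniform second-moment bound $\E[dW_d^2]=2d/(d+1)$, we have convergence in Wasserstein-1. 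For the first term, $a_s^2-1=Y^d_{s/d}/\sqrt d$, so I need
\[\sup_{s\le T}\abs{a_s-1}\to0\quad\text{in probability}.\]
This follows because $\abs{X^d_s}^2$ changes by at most $2\int_0^T\abs{R^d_u}\,\du$ on $[0,T]$. That integral is $O_\P(1)$ by stationarity, since $R^d_0\sim$ approximately $N(0,1)$ and $\E\abs{R^d_u}$ is bounded. Also $\abs{X^d_0}^2/d\to1$ by the LLN. Finally, $\int_0^T(R^d_s)_+\,\ds=O_\P(1)$ by stationarity and Fubini, which closes the Lipschitz case.

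For $f(u)=u^p$, I would expand $(-a\sqrt dW^d-r)^p$ binomially. The inner difference becomes a finite sum of terms
\[\abs{r}^{p-k}\,\Abs{a^k\E[(\sqrt dW^d)^k]-\E[\tau^k]}.\]
Convergence of moments $\E[(\sqrt dW^d)^k]\to\E[\tau^k]$ follows from the closed forms in Lemma~\ref{lemma-W}, or in general from $\E[dW_d^2]^{k/2}$-type Beta-function formulas obtained via the inverse-transform representation, since $d(1-U^{2/(d-1)})\to-2\log U$ with uniform integrability. The prefactor $\sup_s\abs{a_s^k-1}\to0$ as before. What remains is to bound
\[\int_0^T(R^d_s)_+\abs{R^d_s}^{p-k}\,\ds=O_\P(1),\]
which needs uniform-in-$d$ moments of all orders of $R^d_0=(X^d_0|V^d_0)$ under stationarity. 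These hold since $R^d_0\deq \abs{X}\,V_1$, with $\sqrt d V_1$ having uniformly bounded moments.

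The main obstacle I expect is making the replacement $\abs{X^d_s}\approx\sqrt d$ uniform in $s\in[0,T]$. In \eqref{eq-wtmu}, $X^d_s$ appears rather than a functional of $R^d$, so the claim "on $\Omega=D(\R)$" implicitly requires this uniform control. The argument above reduces it to the pathwise bound
\[\Abs{\abs{X^d_s}^2-\abs{X^d_0}^2}\le2\int_0^s\abs{R^d_u}\,\du\]
plus stationarity, which I expect suffices.
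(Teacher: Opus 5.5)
Your proposal is correct and follows the paper's route. Like the paper, you write both compensators as $\int_0^t (R^d_{s-})_+\,\E[f(\cdot)]\,\ds$, bound the supremum by the integral of the pointwise discrepancy, and conclude via moment convergence of $\sqrt d W^d$ (polynomial case) or $L^1$ convergence (Lipschitz case), together with $\abs{X^d_s}/\sqrt d\to1$. You also make explicit two steps the paper leaves implicit: the uniform-in-$s$ control $\sup_{s\le T}\abs{\abs{X^d_s}^2/d-1}\to0$ in probability, obtained from $\abs{\abs{X^d_s}^2-\abs{X^d_0}^2}\le 2\int_0^T\abs{R^d_u}\,\du$, and the $O_\P(1)$ bounds on the weight integrals from stationarity (note that in fact $R^d_0\sim N(0,1)$ exactly); this is a tightening of the same argument rather than a different one.
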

\begin{proof}
    The limiting process $R^F$ defined in Proposition~\ref{prop-R} also has an integral representation:
    \[R_t=R_0+t+\int_{\ocinterval{0,t}\times\R_+^2}1_{[0,R_{s-}]}(\lambda)(-v-R_{s-})M(\ds\d\lambda\d v),\]
    where $M$ is the Poisson random measure with intensity measure $\ell_+^{\otimes2}\otimes\chi(2)$. Observe that the third characteristic satisfies
    \[f*\wt{\mu}_t\circ R^d=\int_{\ocinterval{0,t}\times\R_+}1_{[0,R_{s-}^d]}(\lambda)\E[f(-\tau-R_{s-}^d)]\,\d s\d\lambda,\]
    where $\tau\sim\chi(2)$.
    Combined with Eq. \eqref{eq-wtmu}, we obtain
    \[f*\wt{\mu}_t\circ R^d-f*\wt{\mu}_t^d=\int_{\cointerval{0,t}\times\R_+}1_{[0,R_{s-}^d]}(\lambda)\Paren{\E[f(-\tau-R_{s-}^d)-f(-\abs{X_s^d}W^d-R_{s-}^d)]}\,\d s\d\lambda.\]
    This enables us to bound as
    \[\sup_{0\le t\le T}\abs{f*\wt{\mu}_t\circ R^d-f*\wt{\mu}_t^d}\le\int_{\cointerval{0,T}\times\R_+}1_{[0,R_{s-}^d]}(\lambda)\E[\abs{f(-\tau-R_{s-}^d)-f(-\abs{X_s^d}W^d-R_{s-}^d)}]\,\d s\d\lambda.\]
    When $f(u)=u^p$, by expanding the $p$-th power, the convergence
    \[\E[\abs{f(-\tau-R_{s-}^d)-f(-\abs{X_s^d}W^d-R_{s-}^d)}]\xrightarrow{d\to\infty}0\]
    follows from moment convergence $\E[(\sqrt{d}W^d)^q]\to\E[\tau^q]$ for $1\le q\le p$ and $\abs{X_s^d}/\sqrt{d}\xrightarrow{\text{a.s.}}1$.
    This moment convergence, in turn, follows from weak convergence (Proposition~\ref{prop-convergence-to-Rayleigh}) and uniform integrability for any $p\ge1$, which directly follows from Lemma~\ref{lemma-W} when $p=1,2,\cdots,5$ and can be checked similarly for $p\ge6$.
    The case when $f$ is Lipschitz continuous also follows from $L^1$-convergence $\E[\abs{\tau-\sqrt{d}W^d}]\to0$.
\end{proof}

\subsection{Proof of Proposition~\ref{prop-R}}

\begin{proof}[Proof of Proposition~\ref{prop-R}]
    Again we assume, without losing generality, that the underlying probability space is the Skorokhod space $\Omega=D(\R)$.
    The convergence will be established through \citet[Theorem IX.3.48]{Jacod-Shiryaev2003}.
    The four conditions in (vi) of \citet[Theorem IX.3.48]{Jacod-Shiryaev2003} can be derived from Lemma~\ref{Lemma-wtmu-convergence}.
    The convergence of the first and modified second characteristics, conditions $[\text{Sup-}\beta'_{\text{loc}}]$ and $[\gamma'_{\text{loc}}\text{-}D]$ respectively, follows by taking $f(u)=u$ and $f(u)=u^2$ in the lemma.
    Condition IX~3.49 and $[\delta_{\text{loc}}\text{-}D]$ are also immediate once one realises $f\in C_1(\R)$ can be selected to be always Lipschitz; see \citet[VII.2.7]{Jacod-Shiryaev2003}.
    Condition (v) trivially follows from our stationarity assumption.
    Condition (iv) follows from the expressions \eqref{eq-wtmu}, \eqref{eq-B-by-wtmu}, and \eqref{eq-C-by-wtmu}.
    Condition (iii) is satisfied since the limit $R^d$ is a Markov process with generator $L_F$ \eqref{eq-L_F}, together with \citet[Lemma~IX.4.4]{Jacod-Shiryaev2003}.
    Condition (ii) is also easy as we have
    \[\sup_{\omega\in\Omega}\Paren{\abs{u}^21_{\Brace{\abs{u}>b}}*\wt{\mu}_{t}(\omega)}=\sup_{\omega\in\Omega}\sum_{n=0}^{n(t)(\omega)}\abs{R_{S_n-}(\omega)-R_{S_n}(\omega)}^21_{\Brace{\abs{R_{S_n-}-R_{S_n}}>b}}(\omega).\]
    When this process is stopped at a stopping time $S_a(\omega)\coloneq \inf\{t\ge0\mid\abs{\omega(t)},\abs{\omega(t-)}\ge a\}$, see \citet[IX.3.38]{Jacod-Shiryaev2003}, the right-hand side converges to $0$ as $b\to\infty$, for any $a\ge0$ and $t\ge0$, because the jump distance $\abs{R_{S_n-}(\omega)-R_{S_n}(\omega)}$ cannot exceed $2a$.
    Lastly we prove Condition (i). The total variation process of the predictable projection ${}^\cP\!R$ is
    \[{}^\cP\!R_t=\int^t_0\paren{1-\sqrt{\frac{\pi}{2}}(R_{s-})_+-(R_{s-})_+^2}\,\ds.\]
    Thus, the total variation of a stopped version ${}^\cP\!R^{S_a}$ is strongly majorised, see \citet[Definition~VI.3.34]{Jacod-Shiryaev2003}, by $F_a^1(t)\coloneq t(1+a\sqrt{\pi/2}+a^2)$, for any $a\ge0$.
    Similarly, the stopped modified second characteristic of $R$ is given by
    \[u^2*\wt{\mu}_t=\int^{t}_0\paren{2(R_{s-})_++\sqrt{2\pi}(R_{s-})^2_++(R_{s-})_+^3}\ds,\]
    since $\E[\tau^2]=2$ and $\E[\tau]=\sqrt{\pi/2}$; see \eqref{eq-tau}.
    Therefore, $(u^2*\wt{\mu})^{S_a}$ is majorised by $F_a^2(t)\coloneq t(2a+a^2\sqrt{2\pi}+a^3)$.
    This concludes the proof as $F(a)\coloneq F_a^1+F^2_a$ satisfies Condition (i) of \citet[Theorem IX.3.48]{Jacod-Shiryaev2003}.
\end{proof}

\begin{proof}[Proof of Corollary~\ref{cor-number-of-events}]
    For the jump measure $\mu^d$, it holds that $\E[f*\mu^d_T]=\E[f*\wt{\mu}^d_T]$ for any $T>0$ and bounded measurable function $f\in L^\infty(\R)$; see \citet[Theorem~II.1.8]{Jacod-Shiryaev2003}, \citet[31.3]{Metivier1982} and \citet[Theorem~3.15]{Jacod1975}.
    The result follows immediately:
    \begin{align*}
        \E\Square{\sum_{0\le t\le T}1_{\Brace{\Delta V_t^d\ne0}}}&=\E\Square{\sum_{0\le t\le T}1_{\Brace{\Delta R_t^d\ne0}}}
        =\E[1*\mu^d_T]\\&=\E[1*\wt{\mu}^d_T]=\E\Square{\int^T_0(R_s^d)_+\,\ds}=\int^T_0\E[(R_s^d)_+]\,\d s=\frac{T}{\sqrt{2\pi}},
    \end{align*}
    since $R^d_s\sim N(0,1)$ by stationarity.
\end{proof}

\subsection{Exponential Ergodicity}

\begin{proof}[Proof of Proposition~\ref{prop-exponential-ergodicity}]
    The proof is completed once the drift condition
    \[L^FV\le-V+C\]
    is established for a continuous function $V:\R\to\cointerval{1,\infty}$ and a constant $C\ge0$; see \citet[Theorem~3.2.3]{Kulik2018} and \citet[Theorem~4.1]{Hairer2021}. To be precise, one needs an additional condition requiring that the sublevel set $V^{-1}([1,c])$ is compact for every $c\ge1$ and $P^h$-locally Dobrushin for some $h>0$. However, this follows immediately once we establish the drift condition for
    \begin{equation}\label{eq-V}
        V(x)\coloneq \begin{cases}
            1+x&x\ge0,\\
            e^{-x}&x<0.
        \end{cases}
    \end{equation}
    We will see that the choice of $V$ on $x<0$ is essential, whereas on $x>0$ it could have been chosen otherwise.
    Intuitively, starting negatively away from the origin is highly disadvantageous for mixing of $R^F$, as the process can return to the origin only at unit speed. By contrast, any positive start is almost equally advantageous as a jump to the negative region is allowed anytime. Our choice has been made solely to ensure compactness of $V^{-1}([1,c])$.

    With this choice of $V$, we have $L^FV(x)=-V(x)$ on $x<0$ and
    \[L^FV(x)=1+x\Paren{\E[-\tau]-x}=1-x^2-\sqrt{\frac{\pi}{2}}x\qquad x>0.\]
    Thus, taking $C=2$, we obtain the drift condition with $V$ given in \eqref{eq-V}.
\end{proof}

\section{Formulae for Diffusion Coefficients}\label{sec-appendix-D}

\subsection{Resolvents of Radial Momentum Processes}

\begin{proposition}[Laplace Transforms]
    Let $L^F,L^B$ be the generators of the limiting momentum processes $R^F,R^B$ corresponding to FECMC and BPS, respectively.
    For the identity function $\id(x)=x$,
    $f^F_\rho\coloneq (\rho-L^F)^{-1}\id$ and $f^B_\rho\coloneq (\rho-L^B)^{-1}\id$ can be expressed as
    \[f^F_\rho(x)=\begin{cases}
    e^{\rho x}\paren{k^F(\rho)-\frac{1}{\rho^2}}+\frac{x}{\rho}+\frac{1}{\rho^2},&x\le0,\\
    e^{\rho x+\frac{x^2}{2}}(Nf^F_\rho+1)\int^\infty_xye^{-\paren{\rho y+\frac{y^2}{2}}}\,\d y,& x\ge0,
    \end{cases}\]
    \[Nf^F_\rho\coloneq \int^\infty_0f^F_\rho(-\tau)\tau e^{-\frac{\tau^2}{2}}\,\d\tau=\paren{k^F(\rho)-\frac{1}{\rho^2}}\E[e^{-\rho\tau}]-\frac{1}{\rho}\sqrt{\frac{\pi}{2}}+\frac{1}{\rho^2},\]
    \begin{equation}\label{eq-k-F}
        k^F(\rho)=\frac{\E[e^{-\rho\tau}]}{\rho^2Z^F(\rho)}\paren{-\E[e^{-\rho\tau}]+\rho^2-\rho\sqrt{\frac{\pi}{2}}+1},
    \end{equation}
    where $Z^B(\rho)=1-\E[e^{-\rho\tau}]^2$ and $\tau\sim\chi(2)$ follow the standard Rayleigh distribution, and
    \[f^B_\rho(x)=\begin{cases}
    e^{\rho x}\paren{k^B(\rho)-\frac{1}{\rho^2}}+\frac{x}{\rho}+\frac{1}{\rho^2},&x\le0,\\
    e^{\rho x+\frac{x^2}{2}}\int^\infty_xe^{-\paren{\rho y+\frac{y^2}{2}}}y\paren{1+k^B(\rho)e^{-\rho y}-\frac{y}{\rho}+\frac{1-e^{-\rho y}}{\rho^2}}\,\d y,& x\ge0,
    \end{cases}\]
    \[k^B(\rho)=\frac{1}{\rho^2Z^B(\rho)}\Paren{-\E[e^{-2\rho\tau}]+2(\rho^2+1)\E[e^{-\rho\tau}]-1},\]
    where $Z^B(\rho)=1-\E[e^{-2\rho\tau}]$.
\end{proposition}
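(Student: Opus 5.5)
The plan is to solve the resolvent equation $(\rho-L)f=\id$ directly as a first-order ODE on each half-line, using the explicit generators \eqref{eq-L_F} and \eqref{eq-L_B}. The jump term vanishes on $x\le0$, and on $x>0$ it has rate $x$. (The intended rate is $x_+=\max(x,0)$ as in \eqref{eq-lambda}, the inline "$\min$" in Proposition~\ref{prop-R} being a typo.) Since $f_\rho=(\rho-L)^{-1}\id$ is a priori well defined, it suffices to exhibit a function in the domain solving the equation with the right growth. Uniqueness among functions of at most polynomial/exponential growth compatible with $\gamma$-integrability then identifies it with the Laplace transform \eqref{eq-f-to-resolvent}. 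For this identification I will use the exponential ergodicity of Proposition~\ref{prop-exponential-ergodicity}, or directly the bound $|\E_x[R_t]|\le C(1+|x|+t)$, which gives finiteness of the Laplace integral.

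\textbf{Negative half-line.} For $x\le0$ the equation reads $\rho f-f'=x$. Its general solution is $f(x)=ce^{\rho x}+x/\rho+1/\rho^2$, and writing $c=k(\rho)-1/\rho^2$ gives the stated form with one unknown constant $k$, so that $f(0)=k$.

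\textbf{Positive half-line, FECMC.} For $x\ge0$ the equation is
\[f'(x)-(\rho+x)f(x)=-x\bigl(1+Nf\bigr),\qquad Nf=\E[f(-\tau)].\]
Here $Nf$ is a constant because the FECMC jump target $-\tau$ does not depend on $x$. Multiplying by the integrating factor $e^{-(\rho x+x^2/2)}$ and choosing the solution that does not blow up like $e^{x^2/2}$ (required since $f_\rho$ must have moderate growth) gives
\[f(x)=e^{\rho x+x^2/2}(Nf+1)\int_x^\infty ye^{-(\rho y+y^2/2)}\,dy.\]
Two scalar conditions then close the system.
\begin{enumerate}
\item \emph{Consistency of $Nf$.} Insert the negative-side formula into $\E[f(-\tau)]$ and use $\E[\tau]=\sqrt{\pi/2}$. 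This gives $Nf=(k-1/\rho^2)\E[e^{-\rho\tau}]-\rho^{-1}\sqrt{\pi/2}+\rho^{-2}$.
\item \emph{Continuity at $0$.} Since $f$ lies in the domain and is continuous along the flow, $k=f(0)=(Nf+1)\,I(\rho)$, where $I(\rho)=\int_0^\infty ye^{-\rho y-y^2/2}dy=\E[e^{-\rho\tau}]$.
\end{enumerate}
These two relations form a linear system in $(k,Nf)$. Solving it, with the denominator $1-\E[e^{-\rho\tau}]^2$ (the "$Z^F$" of \eqref{eq-k-F}, misprinted as $Z^B$), yields \eqref{eq-k-F}.

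\textbf{Positive half-line, BPS.} The jump term is now $x(f(-x)-f(x))$, which couples $f$ on $x>0$ with the already known negative branch evaluated at $-x$. The ODE becomes
\[f'-(\rho+x)f=-x\bigl(1+f(-x)\bigr),\qquad f(-x)=(k-1/\rho^2)e^{-\rho x}-x/\rho+1/\rho^2,\]
which is again linear with an explicit inhomogeneity. Choosing the non-exploding solution gives the displayed integral with the integrand $y(1+ke^{-\rho y}-y/\rho+(1-e^{-\rho y})/\rho^2)$. Continuity at $0$ then gives a single linear equation for $k$:
\[k=\int_0^\infty ye^{-\rho y-y^2/2}\bigl(1+ke^{-\rho y}-y/\rho+(1-e^{-\rho y})/\rho^2\bigr)dy.\]
To evaluate it I will use
\[\int_0^\infty y^2e^{-\rho y-y^2/2}dy=1-\rho\,\E[e^{-\rho\tau}],\]
obtained by integration by parts. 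This reduces every term to $\E[e^{-\rho\tau}]$ and $\E[e^{-2\rho\tau}]$, and solving gives $k^B$ with $Z^B=1-\E[e^{-2\rho\tau}]$.

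\textbf{Main obstacle.} The main difficulty is justifying that the constructed $f$ really equals $(\rho-L)^{-1}\id$. This requires two facts. First, the discarded homogeneous solution $e^{\rho x+x^2/2}$ on $x>0$ is inadmissible; it is not integrable against $\gamma$-type weights and contradicts $|f_\rho(x)|\le\int_0^\infty e^{-\rho t}|\E_x R_t|dt\lesssim 1+|x|$. Second, the explicit solution lies in the extended domain. For this I plan to apply Dynkin's formula/Itô for the PDMP to $e^{-\rho t}f(R_t)$ and let $t\to\infty$ using the linear growth of $f$ and moment bounds for $R_t$. The remaining algebra is routine.
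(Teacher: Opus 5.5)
Your treatment of $f^F_\rho$ follows the paper's argument. You solve $(\rho-L^F)f=\id$ as a linear ODE on each half-line and kill the $e^{\rho x+x^2/2}$ mode on $x\ge0$. You then close the system with $f(0)=k$ and the self-consistency of $Nf$. With $m_1:=\E[e^{-\rho\tau}]$ this gives $k(1-m_1^2)=\rho^{-2}m_1(\rho^2-\rho\sqrt{\pi/2}+1-m_1)$, which is the stated $k^F$ with $Z^F=1-m_1^2$. Your readings of $x_+$ as $\max(x,0)$ and of the misprinted $Z^B$ are both correct. For $f^B_\rho$ the paper simply cites Bierkens--Lunel (Theorem~5.3), whereas you rederive it by the same ODE method. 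Your Dynkin-formula identification of the explicit solution with the Laplace transform is also more careful than the paper's bare appeal to integrability. Both of these choices are fine in principle.

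The BPS computation, however, rests on a false identity. It is not true that $\int_0^\infty y^2e^{-\rho y-y^2/2}\,\mathrm{d}y=1-\rho\,\E[e^{-\rho\tau}]$: at $\rho=0$ the left side is $\E[\tau]=\sqrt{\pi/2}\neq1$. Integrating by parts against $ye^{-y^2/2}$ gives $\int_0^\infty(1-\rho y)e^{-\rho y-y^2/2}\,\mathrm{d}y$. Its first piece equals $(1-m_1)/\rho$, not $1$; the value $1$ belongs to $\int_0^\infty(\rho+y)e^{-\rho y-y^2/2}\,\mathrm{d}y$. The correct relation, which the paper itself uses later, is $\rho\,\E[\tau e^{-\rho\tau}]=1-(1+\rho^2)m_1$.

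Write $m_2:=\E[e^{-2\rho\tau}]$. Plugging your version into the continuity equation gives $\rho^2(1-m_2)k=(2\rho^2+1)m_1-\rho-m_2$, which is not the claimed $k^B$. With the correct relation the same equation gives $\rho^2(1-m_2)k=2(\rho^2+1)m_1-1-m_2$, exactly as stated. So the step as written would not deliver the proposition, but replacing that one identity repairs it.
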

\begin{proof}
    For $L^B$, its resolvent operator $(\rho-L^B)^{-1}$ is determined in \citet[Theorem~5.3]{Bierkens-Lunel2022}.
    Therefore, we present calculations leading to the expression for $(\rho-L^F)^{-1}\id$ only. The resolvent equation $(\rho-L^F)f=h$ for $h=\id$ takes the form of
    \[f'(x)=\begin{cases}
        \rho f(x)-x,&x\le0,\\
        (\rho+x)f(x)-x(Nf+1),&x\ge0,
    \end{cases}\]
    which gives the solution
    \[f(x)=\begin{cases}
        e^{\rho x}\paren{k^F(\rho)-\frac{1}{\rho^2}}+\frac{x}{\rho}+\frac{1}{\rho^2},&x\le0,\\
        e^{\rho x+\frac{x^2}{2}}\paren{k^F(\rho)-(Nf+1)\int^x_0ye^{-\paren{\rho y+\frac{y^2}{2}}}\,\d y},& x\ge0,
    \end{cases}\]
    where $k^F(\rho)$ is an integral constant. There is only one choice for $k^F(\rho)$ to maintain the integrability of $f$, namely,
    \[k^F(\rho)=(Nf+1)\int^\infty_0ye^{-\paren{\rho y+\frac{y^2}{2}}}\,\d y.\]
    Substituting $Nf$ into the above formula, we obtain the final result.
\end{proof}

In the above proposition, there is an important quantity that needs to be computed: $\E[e^{-\rho\tau}]$.
This quantity is called the moment generating function of the standard Rayleigh distribution.
This quantity can be readily represented by the (complementary) error function through the following lemma.

\begin{lemma}[Moment Generating Function of the Standard Rayleigh Distribution]
    Let $\tau\sim\chi(2)$ be a standard Rayleigh distribution random variable. The moment generating function simplifies to
    \begin{equation}\label{eq-MGF-to-Omega}
        \E[e^{-\rho\tau}]=1-\sqrt{\frac{\pi}{2}}\rho e^{\frac{\rho^2}{2}}\erfc\paren{\frac{\rho}{\sqrt{2}}}=1-\Omega(\rho),\qquad\rho\ge0,
    \end{equation}
    where $\erfc(z)=\frac{2}{\sqrt{\pi}}\int^\infty_ze^{-t^2}\,\dt$ is the complementary error function, and $\Omega$ is defined in Eq. \eqref{eq-Omega}.
\end{lemma}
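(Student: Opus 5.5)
The plan is to compute the Laplace transform directly from the Rayleigh density and then recognise the complementary error function. Write
\[\E[e^{-\rho\tau}]=\int_0^\infty e^{-\rho t}\,t e^{-t^2/2}\,\dt .\]
The factor $te^{-t^2/2}=-\frac{\d}{\dt}e^{-t^2/2}$ suggests integrating by parts. The boundary term at $t=0$ contributes $1$, and the one at $\infty$ vanishes because $\rho\ge0$. This gives
\[\E[e^{-\rho\tau}]=1-\rho\int_0^\infty e^{-\rho t-t^2/2}\,\dt .\]

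Next I complete the square, $\rho t+t^2/2=(t+\rho)^2/2-\rho^2/2$, and substitute $s=t+\rho$. The remaining integral becomes
\[e^{\rho^2/2}\int_\rho^\infty e^{-s^2/2}\,\d s .\]
Hence $\E[e^{-\rho\tau}]=1-\rho e^{\rho^2/2}\int_\rho^\infty e^{-s^2/2}\,\d s$, which is exactly $1-\Omega(\rho)$ by the second expression in \eqref{eq-Omega}.

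Finally I convert to $\erfc$. The substitution $s=\sqrt{2}u$ gives
\[\int_\rho^\infty e^{-s^2/2}\,\d s=\sqrt{2}\int_{\rho/\sqrt2}^\infty e^{-u^2}\,\d u=\sqrt{\tfrac{\pi}{2}}\,\erfc\paren{\tfrac{\rho}{\sqrt2}}.\]
This yields the middle expression in \eqref{eq-MGF-to-Omega}. It also matches the first form in \eqref{eq-Omega}, since $\erfcx(z)=e^{z^2}\erfc(z)$.

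There is no real obstacle. The only points needing care are justifying that the boundary term at infinity vanishes, which holds for $\rho\ge0$ because of Gaussian decay, and keeping track of the $\sqrt{2}$ factors in the final rescaling.
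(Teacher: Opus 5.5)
Your proof is correct and follows essentially the paper's route. The paper splits $t=(t+\rho)-\rho$ so that $(t+\rho)e^{-t^2/2-\rho t}$ is an exact derivative; this is the same computation as your integration by parts, and it then completes the square and rescales by $\sqrt{2}$ exactly as you do.
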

\begin{proof}
    A straightforward calculation reveals
    \begin{align*}
        \E[e^{-\rho\tau}]&=\int^\infty_0te^{-\frac{t^2}{2}-\rho t}\,\dt
        =\int^\infty_0(t+\rho)e^{-\frac{t^2}{2}-\rho t}\,\dt-\rho\int^\infty_0e^{-\frac{t^2}{2}-\rho t}\,\dt\\
        &=-\SQuare{e^{-\frac{t^2}{2}-\rho t}}^\infty_0-\rho e^{\frac{\rho^2}{2}}\int^\infty_0 e^{-\frac{(t+\rho)^2}{2}}\,\dt
        =1-\rho e^{\frac{\rho^2}{2}}\sqrt{2}\int^\infty_{\rho/\sqrt{2}}e^{-s^2}\,\ds.
    \end{align*}
\end{proof}

\begin{remark}[On the complementary Error Function and Mills' Ratio]\label{remark-Omega}
    The term $\Omega(\rho)$ on the right-hand side
    \[\Omega(\rho)=\rho e^{\frac{\rho^2}{2}}\int^\infty_\rho e^{-\frac{t^2}{2}}\,\dt=\sqrt{2\pi}\rho e^{\frac{\rho^2}{2}}(1-\Phi(\rho))=\rho M(\rho)\]
    can also be described via Mills' ratio of the Gaussian density $\phi$ and distribution function $\Phi$
    \[M(\rho)=\frac{1-\Phi(\rho)}{\phi(\rho)}=\sqrt{2\pi}e^{\frac{\rho^2}{2}}(1-\Phi(\rho)).\]
    This reinterpretation is exploited in the proof of Corollaries~\ref{cor-continuity-at-0} and~\ref{cor-derivative-of-rho}, as there is a large body of literature on the Mills' ratio and its approximation due to its statistical importance.
    This point of view is also important in numerical computation. Many software packages support the \textit{scaled} complementary error function $\erfcx(x)\coloneq e^{x^2}\erfc(x)$ \citep{Oldham+2009} as a built-in function to prevent numerical overflows \citep{Cody1993,Zaghloul2024}.
    Our Figure~\ref{fig-sigma} is produced through this $\erfcx$, by computing
    \[\Omega(\rho)=\sqrt{\frac{\pi}{2}}\rho e^{\frac{\rho^2}{2}}\erfc\paren{\frac{\rho}{\sqrt{2}}}=\sqrt{\frac{\pi}{2}}\rho\erfcx\paren{\frac{\rho}{\sqrt{2}}}.\]
    This type of quantity also arises in the error performance analysis of digital communication channels \citep{Simon-Alouini1998,Simon-Alouini2004}.
\end{remark}

\subsection{Proof of Theorem~\ref{thm-formula-for-sigma}}

\begin{proof}[Proof of Theorem~\ref{thm-formula-for-sigma}]
    We first prove \eqref{eq-sigma-F}.
    Building upon the decomposition
    \[\frac{\sigma_F^2(\rho)}{8}=\E[R_0^Ff(R_0^F)]=\E\Square{1_{\Brace{R_0^F\le0}}R_0^Ff^F_\rho(R_0^F)}+\E\Square{1_{\Brace{R_0^F\ge0}}R_0^Ff^F_\rho(R_0^F)},\]
    we treat the two terms on the right-hand side separately.
    \begin{description}
        \item[First term.] We have
        \begin{align*}
            \E\Square{1_{\Brace{R_0^F\le0}}R_0^Ff^F_\rho(R_0^F)}
            &=\frac{1}{\sqrt{2\pi}}\int^0_{-\infty}xe^{-\frac{x^2}{2}}\paren{e^{\rho x}\paren{k^F(\rho)-\frac{1}{\rho^2}}+\frac{x}{\rho}+\frac{1}{\rho^2}}\dx\\
            &=\frac{1}{\sqrt{2\pi}}\paren{-\paren{k^F(\rho)-\frac{1}{\rho^2}}\E[e^{-\rho\tau}]+\frac{\E[\tau]}{\rho}-\frac{1}{\rho^2}}\\
            &=\frac{1}{\sqrt{2\pi}}\paren{1-\frac{\rho^2-\rho\sqrt{\frac{\pi}{2}}+\Omega(\rho)}{\rho^2\Omega(\rho)(2-\Omega(\rho))}},
        \end{align*}
        where in the last step we have substituted the expression \eqref{eq-k-F} for $k^F$ and \eqref{eq-MGF-to-Omega} for $\E[e^{-\rho\tau}]$.
        Note that this quantity also equals $-Nf^F_\rho/\sqrt{2\pi}$.
        \item[Second term.] We have
        \begin{align*}
            \E\Square{1_{\Brace{R_0^F\ge0}}R_0^Ff^F_\rho(R_0^F)}&=\frac{1}{\sqrt{2\pi}}\int^\infty_0(Nf^F_\rho+1)xe^{-\frac{x^2}{2}}e^{\rho x+\frac{x^2}{2}}\paren{\int^\infty_xye^{-\paren{\rho y+\frac{y^2}{2}}}\,\d y}\d x\\
            &=\frac{(Nf^F_\rho+1)}{\sqrt{2\pi}}\int^\infty_0xe^{\rho x}\int^\infty_xye^{-\paren{\rho y+\frac{y^2}{2}}}\,\d y\d x\\
            &=\paren{\frac{Nf^F_\rho}{\sqrt{2\pi}}+\frac{1}{\sqrt{2\pi}}}\paren{-\frac{\Omega(\rho)}{\rho^2}+\frac{1}{\rho}\sqrt{\frac{\pi}{2}}},
        \end{align*}
        where in the last step we have applied the integral by parts formula twice.
        Using the expression for $Nf^F_\rho$ obtained in the first term, we arrive at
        \[=\frac{1}{\sqrt{2\pi}}\frac{1}{\rho^2\Omega(\rho)(2-\Omega(\rho))}\paren{\rho^2-\rho\sqrt{\frac{\pi}{2}}+\Omega(\rho)}\paren{-\frac{\Omega(\rho)}{\rho^2}+\frac{1}{\rho}\sqrt{\frac{\pi}{2}}}.\]
        \item[Final step.] Combining the two terms, we conclude that
        \[\frac{\sigma_F^2(\rho)}{8}=\frac{1}{\sqrt{2\pi}}\Square{1-\frac{\paren{\rho^2-\rho\sqrt{\frac{\pi}{2}}+\Omega(\rho)}^2}{\rho^4\Omega(\rho)(2-\Omega(\rho))}}.\]
    \end{description}
    This concludes the proof of Equation~\eqref{eq-sigma-F}.
    The proof for Equation~\eqref{eq-sigma-B} proceeds similarly.
    For the first term, we have
    \begin{align*}
        \E\Square{1_{\Brace{R_0^B\le0}}R_0^Bf^B_\rho(R_0^B)}&=-\frac{1}{\sqrt{2\pi}}\int^\infty_0ye^{-\frac{y^2}{2}}f(-y)\,\d y\\
        &=-\frac{1}{\sqrt{2\pi}}\int^\infty_0ye^{-\frac{y^2}{2}}\paren{e^{-\rho y}\paren{k^B(\rho)-\frac{1}{\rho^2}}-\frac{y}{\rho}+\frac{1}{\rho^2}}\,\d y\\
        &=\frac{1}{\sqrt{2\pi}}\frac{1}{\rho^2}\paren{(1-\rho^2k^B(\rho))\E[e^{-\rho\tau}]+\rho\sqrt{\frac{\pi}{2}}-1}\\
        &=\frac{1}{\sqrt{2\pi}}\frac{1}{\rho^2}\paren{\frac{2\E[e^{-\rho\tau}]-2(\rho^2+1)\E[e^{-\rho\tau}]^2}{1-\E[e^{-2\rho\tau}]}+\rho\sqrt{\frac{\pi}{2}}-1}
    \end{align*}
    For the second term, we have
    \begin{align*}
        \E\Square{1_{\Brace{R_0^B\ge0}}R_0^Bf^B_\rho(R_0^B)}&=\frac{1}{\sqrt{2\pi}}\int^\infty_0xe^{-\frac{x^2}{2}}f(x)\,\d x\\
        &=\frac{1}{\sqrt{2\pi}}\int^\infty_0xe^{-\frac{x^2}{2}}e^{\rho x+\frac{x^2}{2}}\int^\infty_xe^{-\rho y-\frac{y^2}{2}}y\\
        &\qquad\times\paren{1+k^B(\rho)e^{-\rho y}-\frac{y}{\rho}+\frac{1}{\rho^2}-\frac{e^{-\rho y}}{\rho^2}}\,\d y\d x\\
        &=\frac{1}{\sqrt{2\pi}}\frac{1}{\rho^2}\Paren{\E[e^{-\rho\tau}]+k^B(\rho)\E[e^{-2\rho\tau}]\\
        &\qquad-\frac{\E[\tau e^{-\rho\tau}]}{\rho}+\frac{\E[e^{-\rho\tau}]}{\rho^2}-\frac{\E[e^{-2\rho\tau}]}{\rho^2}\\
        &\qquad-1-k^B(\rho)\E[e^{-\rho\tau}]+\frac{\E[\tau]}{\rho}-\frac{1}{\rho^2}+\frac{\E[e^{-\rho\tau}]}{\rho^2}\\
        &\qquad+\rho\E[\tau]+k^B(\rho)\rho\E[\tau e^{-\rho\tau}]\\
        &\qquad-\E[\tau^2]+\frac{\E[\tau]}{\rho}-\frac{\E[\tau e^{-\rho\tau}]}{\rho}
        }
    \end{align*}
    where in the last equality we have carried out integration by parts twice.
    Using the relationship $\rho\E[\tau e^{-\rho\tau}]=1-(1+\rho^2)\E[e^{-\rho\tau}]$, we obtain
    \begin{align*}
        \rho^2\sqrt{2\pi}\E\Square{1_{\Brace{R_0^B\ge0}}R_0^Bf^B_\rho(R_0^B)}&=k^B(\rho)\Paren{\E[e^{-2\rho\tau}]-\E[e^{-\rho\tau}]+\rho\E[\tau e^{-\rho\tau}]}\\
        &\qquad+\rho\E[\tau]+\E[e^{-\rho\tau}]-1-\E[\tau^2]\\
        &\qquad+\frac{1}{\rho}\paren{-2\E[\tau e^{-\rho\tau}]+2\E[\tau]}\\
        &\qquad+\frac{1}{\rho^2}\paren{2\E[e^{-\rho\tau}]-\E[e^{-2\rho\tau}]-1}\\
        &=\sqrt{\frac{\pi}{2}}\left(\rho + \frac{2}{\rho}\right) - 3 - \frac{2}{\rho^2(1-\E[e^{-2\rho\tau}])}\\
        &\qquad\times\paren{(1+\rho^2)\E[e^{-\rho\tau}] - 1} \paren{(2+\rho^2)\E[e^{-\rho\tau}] - 2}.
    \end{align*}
    Combining the two terms, we conclude that
    \begin{align*}
        \rho^2\sqrt{2\pi}\E[R_0^Bf_\rho^B(R_0^B)]&=\frac{1}{\rho^2(1-\E[e^{-2\rho\tau}])}\Paren{-2(1+\rho^2)(2+\rho^2)\E[e^{-\rho\tau}]^2\\
        &\qquad+2\Paren{(2+\rho^2)+2(1+\rho^2)}\E[e^{-\rho\tau}]-4\\
        &\qquad-2\rho^2(1+\rho^2)\E[e^{-\rho\tau}]^2+2\rho^2\E[e^{-\rho\tau}]}+\sqrt{2\pi}\paren{\rho+\frac{1}{\rho}}-4\\
        &=\frac{1}{\rho^2(1-\E[e^{-2\rho\tau}])}\Paren{-4(1+\rho^2)^2\E[e^{-\rho\tau}]^2\\
        &\qquad+8(1+\rho^2)\E[e^{-\rho\tau}]-4}+\sqrt{2\pi}\paren{\rho+\frac{1}{\rho}}-4\\
        &=-\frac{4((1+\rho^2)\E[e^{-\rho\tau}]-1)^2}{\rho^2(1-\E[e^{-2\rho\tau}])}+\sqrt{2\pi}\paren{\rho+\frac{1}{\rho}}-4.
    \end{align*}
    Substituting $\E[e^{-\rho\tau}]=1-\Omega(\rho)$ into the above equation, we arrive at the final result.
\end{proof}

\subsection{Proof of Corollaries}

\begin{proof}[Proof of Corollary~\ref{cor-continuity-at-0}]
    It suffices to show that
    \begin{equation}\label{eq-in-proof-of-continuity-at-0}
        \frac{(\rho^2-\rho\sqrt{\frac{\pi}{2}}+\Omega(\rho))^2}{\rho^4\Omega(\rho)(2-\Omega(\rho))}\to0\qquad(\rho\to0).
    \end{equation}
    One of the clearest ways to show this is to use the Maclaurin series expansion of the Mills' ratio $M(\rho)$; see \citet[41:6:2]{Oldham+2009} and also Remark~\ref{remark-Omega}.
    \[M(\rho)=\sqrt{\frac{\pi}{2}}-\rho+\sqrt{2\pi}\rho^2-\frac{\rho^3}{3}+O(\rho^4),\qquad\rho\to0.\]
    Using this expansion, we see the denominator of the left-hand side of \eqref{eq-in-proof-of-continuity-at-0} is of order $O(\rho^5)$, whereas
    the numerator is of order $O(\rho^6)$.
\end{proof}

\begin{proof}[Proof of Corollary~\ref{cor-derivative-of-rho}]
    The derivative can be computed as
    \begin{align*}
        \frac{\d\sigma^2_F(\rho)}{\d\rho}&=
        \frac{2\sigma_F^2\paren{\rho^2-\rho\sqrt{\frac{\pi}{2}}+\Omega(\rho)}}{\rho^5\Omega^2(\rho)(2-\Omega(\rho))^2}\\
        &\qquad\times\Paren{-\rho^2\Omega^2(\rho)(2-\Omega(\rho))-\paren{\rho^2-\rho\sqrt{\frac{\pi}{2}}}\rho^2(\Omega(\rho)-1)^2\\
        &\qquad+\paren{\rho^2-\rho\sqrt{\frac{\pi}{2}}+\Omega(\rho)}\Omega(\rho)(3-2\Omega(\rho))},
    \end{align*}
    using the laws $M'(\rho)=\Omega(\rho)-1$ and $\Omega'(\rho)=(\rho M(\rho))'=M(\rho)+\rho\Omega(\rho)-\rho$.
    To prove this quantity is negative for any $\rho>0$, we prove that
    \[\paren{\rho^2-\rho\sqrt{\frac{\pi}{2}}+\Omega(\rho)}\Omega(\rho)(3-2\Omega(\rho))<\rho^2\Omega^2(\rho)(2-\Omega(\rho)).\]
    This is equivalent to
    \[\rho^2(\Omega(\rho)-3)(\Omega(\rho)-1)\le\paren{\Omega(\rho)-\rho\sqrt{\frac{\pi}{2}}}(2\Omega(\rho)-3),\]
    and further to
    \[(2-\rho^2)\Omega^2(\rho)+\Omega(\rho)\Paren{4\rho^2-\sqrt{2\pi}\rho-3}+3\sqrt{\frac{\pi}{2}}\rho-3\rho^2>0.\]
    Here we use the lower bound of the Mills' ratio of \cite{Birnbaum1942}, see also \cite{Sampford1953,Yang-Chu2015},
    \[M(\rho)>\frac{2}{\sqrt{\rho^2+4}+\rho}\]
    to establish that
    \begin{align*}
    &(2-\rho^2)\Omega^2(\rho)+\Omega(\rho)\Paren{4\rho^2-\sqrt{2\pi}\rho-3}+3\sqrt{\frac{\pi}{2}}\rho-3\rho^2\\
    &>\paren{\frac{2\rho}{\sqrt{\rho^2+4}+\rho}}^2(2-\rho^2)+\paren{\frac{2\rho}{\sqrt{\rho^2+4}+\rho}}\Paren{4\rho^2-\sqrt{2\pi}\rho-3}+3\sqrt{\frac{\pi}{2}}\rho-3\rho^2\\
    &=\frac{1}{2\rho^2+4+2\rho\sqrt{\rho^2+4}}\Paren{4\rho^2(2-\rho^2)+2\rho(4\rho^2-\sqrt{2\pi}\rho-3)(\sqrt{\rho^2+4}+\rho)\\
    &\qquad+6\rho\paren{\sqrt{\frac{\pi}{2}}-\rho}(\rho^2+2+\rho\sqrt{\rho^2+4})}\\
    &=\frac{\rho}{\rho^2+2+\rho\sqrt{\rho^2+4}}\Paren{\paren{\rho^2+\sqrt{\frac{\pi}{2}}-3}\sqrt{\rho^2+4}-\rho^3+\sqrt{\frac{\pi}{2}}\rho^2-5\rho+6\sqrt{\frac{\pi}{2}}}.
    \end{align*}
    In the rightmost expression, the last factor can be simplified as
    \begin{align*}
    &\hspace{-1cm}\paren{\rho^2+\sqrt{\frac{\pi}{2}}-3}\sqrt{\rho^2+4}-\rho^3+\sqrt{\frac{\pi}{2}}\rho^2-5\rho+6\sqrt{\frac{\pi}{2}}\\
    &>\paren{\rho^2+\sqrt{\frac{\pi}{2}}-3}\rho-\rho^3+\sqrt{\frac{\pi}{2}}\rho^2-5\rho+6\sqrt{\frac{\pi}{2}}\\
    &=2\sqrt{\frac{\pi}{2}}\rho^2-8\rho+6\sqrt{\frac{\pi}{2}}>0.
    \end{align*}
    This is positive since the discriminant is negative:
    \[D=64-4\cdot2\sqrt{\frac{\pi}{2}}\cdot6\sqrt{\frac{\pi}{2}}< 64-48\cdot\frac{3}{2}<0.\]
\end{proof}

\subsection{Proof of Proposition~\ref{prop-asymptotic-MSE}}

Proposition~\ref{prop-asymptotic-MSE} immediately follows from the following stronger result with residual estimates (Lemma~\ref{lemma-MSE}) by applying our weak convergence results (Proposition~\ref{prop-R} and Theorem~\ref{thm-Y}) and the continuous mapping theorem. To facilitate future extensions, we summarize all our assumptions at this point.

\begin{assumption}\label{assumption}
    Recall that the process $(X,V)$ is assumed to be started in stationarity, with the target being standard Gaussian.
    Additionally, assume that
    \[\wh{h}^d_{dT}=\frac{1}{T}\int^{T}_0\frac{Y_t^F}{\sqrt{2}}\,\dt,\qquad\wh{g}^d_T=\frac{1}{T}\int^T_0R_t^F\,\dt,\]
    where $Y^F$ is the OU process given by Eq.~\eqref{eq-Y-FECMC} and $R^F$ given by Eq.~\eqref{eq-L_F}.
    Note that the scaling constant $\sqrt{2}$ appears from the difference between $h$ in Eq.~\eqref{eq-h-g} and $Y^d$ in Eq.~\eqref{eq-Yd}.
\end{assumption}

This corresponds to a first-order approximation in $\wh{h}^d$ and $\wh{g}^d$ as $d\to\infty$. While the standard Gaussian assumption could be relaxed, we do not pursue it here. Our proof remains valid as long as the processes involved, here $Y$ and $R^F$, are sufficiently ergodic.

\begin{lemma}[Asymptotic Estimates of MSEs]\label{lemma-MSE}
    Under Assumption~\ref{assumption}, the following holds for every $d$:
    \[\Var[\wh{h}_{T}^d]=\frac{8}{\sigma^2_F}\frac{d}{T}+O(T^{-2}),\qquad\Var[\wh{g}_{T}^d]=\frac{\sigma^2_F}{4}\frac{1}{T}+O(T^{-2}),\qquad (T\to\infty).\]
\end{lemma}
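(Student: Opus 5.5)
Under Assumption~\ref{assumption} both estimators are time averages of stationary processes, so the plan is to apply the standard variance formula for such averages. For a stationary process $A$ with integrable autocovariance $K_A(s)=\Cov(A_0,A_s)$,
\[\Var\Square{\frac{1}{T}\int_0^TA_t\,\dt}=\frac{2}{T}\int_0^T\paren{1-\frac{s}{T}}K_A(s)\,\ds=\frac{2}{T}\int_0^\infty K_A(s)\,\ds-\frac{2}{T}\int_T^\infty K_A(s)\,\ds-\frac{2}{T^2}\int_0^T sK_A(s)\,\ds.\]
If $\int_0^\infty s\abs{K_A(s)}\,\ds<\infty$, the last two terms are $O(T^{-2})$. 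Indeed, $\int_T^\infty\abs{K_A}\le T^{-1}\int_T^\infty s\abs{K_A}$. So each claim reduces to computing $2\int_0^\infty K_A$ and checking a first-moment integrability of the covariance.

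For $\wh{h}$, the stationary OU process $Y$ solving \eqref{eq-Y-FECMC} has $\Var[Y_0]=\sigma_F^2/(2\cdot\sigma_F^2/4)=2$. Its covariance is $K_Y(s)=2e^{-\sigma_F^2 s/4}$, which decays exponentially. With $A=Y/\sqrt2$ we get $K_A(s)=e^{-\sigma_F^2s/4}$, so $2\int_0^\infty K_A=8/\sigma_F^2$. Note that $\Var[h]=1$, consistent with $\Var[Y_0/\sqrt2]=1$. The horizon is $T$ in OU time, which corresponds to $dT$ in the original time. This gives $\Var[\wh h^d_{dT}]=\frac{8}{\sigma_F^2}\frac1T+O(T^{-2})$, which is the stated form after relabelling the horizon; this rescaling is pure bookkeeping.

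For $\wh{g}$, $A=R^F$ is stationary with law $\gamma$ by Proposition~\ref{prop-exponential-ergodicity}. The needed identity is $2\int_0^\infty K^F(s)\,\ds=\sigma_F^2/4$, i.e. $\int_0^\infty K^F=\sigma_F^2/8=1/\sqrt{2\pi}$. I would obtain it from \eqref{eq-sigma-F-primitive}, which gives $\int_0^\infty e^{-\rho s}K^F(s)\,\ds=\sigma_F^2(\rho)/8$, together with Corollary~\ref{cor-continuity-at-0}, which gives $\sigma_F^2(\rho)\to\sigma_F^2$ as $\rho\searrow0$. Once $K^F$ is shown to be absolutely integrable, dominated convergence identifies the limit with $\int_0^\infty K^F$.

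The main obstacle is the decay of $K^F$. I need $\int_0^\infty(1+s)\abs{K^F(s)}\,\ds<\infty$, and exponential decay would give this. I would try to get it from the exponential ergodicity of Proposition~\ref{prop-exponential-ergodicity}. Write $K^F(s)=\E[R_0(P^s\id)(R_0)]$; since $\E_\gamma[\id]=0$, we have $P^s\id(x)=\int y\,(P^s(x,\d y)-\gamma(\d y))$. Total variation bounds do not directly control the unbounded function $\id$, so I would use the $V$-norm version of Harris' theorem, which holds under the same drift condition $L^FV\le -V+C$ (cf.\ \citet[Theorem~4.1]{Hairer2021}). Because $\abs{x}\le V(x)$, this gives $\abs{P^s\id(x)}\le c_1e^{-c_2s}V(x)$. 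Then $\abs{K^F(s)}\le c_1e^{-c_2 s}\E_\gamma[\abs{R_0}V(R_0)]$, and this expectation is finite because $e^{\abs{x}}$ is integrable against the Gaussian density. Exponential decay of $K^F$ gives both the integrability needed for the $O(T^{-2})$ remainder and the justification of the $\rho\searrow0$ limit.
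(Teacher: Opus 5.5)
Your proposal is correct. Its overall structure coincides with the paper's:
\begin{itemize}
\item the same identity $\Var[\frac{1}{T}\int_0^T A_t\,\dt]=\frac{2}{T}\int_0^T(1-\frac{s}{T})K_A(s)\,\ds$;
\item for $\wh{h}$, the explicit stationary OU covariance. The paper packages this as the closed-form variance of an integrated OU process, which is exactly what your formula returns for $K_A(s)=e^{-\sigma_F^2 s/4}$. After rescaling the horizon the remainder is of order $d^2/T^2$, which is harmless for fixed $d$;
\item for $\wh{g}$, the identification of $2\int_0^\infty K^F$ via \eqref{eq-sigma-F-primitive}, Corollary~\ref{cor-continuity-at-0} and dominated convergence.
\end{itemize}

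The genuine difference is how you obtain exponential decay of $K^F$.

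The paper takes Proposition~\ref{prop-exponential-ergodicity} exactly as stated, namely a total-variation bound with $V\in L^1(\gamma)$. It turns this into exponential decay of the $\alpha$-mixing coefficient between $R^F_0$ and $R^F_t$, then applies Davydov's inequality $\abs{\Cov[R^F_0,R^F_t]}\le C\sqrt{\alpha_t}\,\|R^F_0\|_4^2$. This handles the unbounded test function $\id$ using only fourth moments, at the cost of a square-root loss in the decay rate.

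You instead use the $V$-weighted form of Harris's theorem. It follows from the same drift condition, but it is stronger than the total-variation statement that Proposition~\ref{prop-exponential-ergodicity} records. You then bound $P^s\id$ directly through $\abs{x}\le V(x)$ and $\E_\gamma[\abs{R_0}V(R_0)]<\infty$.

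Your route is more direct: it needs no mixing coefficients or covariance inequalities and avoids the square-root loss. The paper's route needs nothing beyond the total-variation result it has already established. Either argument suffices for both the $O(T^{-2})$ remainder and the passage $\rho\searrow0$.
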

\begin{proof}[Proof of Lemma~\ref{lemma-MSE}]
    From Lemma~\ref{lemma-variance-integrated-OU} below,
    \[\Var\Square{\int^t_0Y_s\,\d s}=\frac{16}{\sigma^2_F}t-\frac{64}{\sigma^4_F}\paren{1-e^{-\frac{\sigma^2_Ft}{4}}}.\]
    The results for $\wh{h}_T=\frac{1}{T}\int^T_0h(X_s)\,\d s$ immediately follows once one realizes that
    \[2\Var[\wh{h}_T]=\Var\Square{\frac{d}{T}\int^{T/d}_0Y_s\,\d s}=\frac{16}{\sigma^2_F}\frac{d}{T}-\frac{64}{\sigma^4_F}\frac{d^2}{T^2}\paren{1-e^{-\frac{\sigma^2_FT}{4d}}},\]
    where we used Assumption~\ref{assumption}. For $\wh{g}_T$ and $g(x,v)=(x|v)$, such an analytic result is unavailable. However, we can still obtain the result from the exponential decay of the covariance function (Lemma~\ref{lemma-exponential-decay-of-covariance}).
    By a calculation similar to that in Lemma~\ref{lemma-variance-integrated-OU} and the integral by parts formula, for $C(t)\coloneq \Cov[R^F_0,R^F_t]$ we have
    \begin{align*}
        \Var\Square{\frac{1}{\sqrt{T}}\int^T_0R_s^F\,\d s}&=\frac{2}{T}\int^T_0\int^u_0C(u-s)\,\d s\d u=2\int^T_0\paren{1-\frac{u}{T}}C(u)\,\d u.
    \end{align*}
    Therefore, the limiting variance is
    \[\lim_{T\to\infty}\Var\Square{\frac{1}{\sqrt{T}}\int^T_0R_s^F\,\d s}=2\int^\infty_0C(u)\,\d u=\frac{\sigma^2_F}{4}\]
    from the Lebesgue's convergence theorem and Corollary~\ref{cor-continuity-at-0}.
    The estimate
    \[\Abs{\Var[\wh{g}_T]-\frac{\sigma^2_F}{4}\frac{1}{T}}\le\frac{2}{T^2}\int^T_0u\abs{C(u)}\,\d u+\frac{2}{T}\int^\infty_T\abs{C(u)}\,\d u=O(T^{-2})\qquad(T\to\infty)\]
    also follows from the exponential decay of $C$ (Lemma~\ref{lemma-exponential-decay-of-covariance}).
\end{proof}

\begin{lemma}[Variance of Integrated OU Processes]\label{lemma-variance-integrated-OU}
    Let $Y$ be an OU process such that $\d Y_t=-aY_t\,\dt+\sigma\,\d B_t$ for $a,\sigma>0$ and 
    \[Y^*_t\coloneq \int^t_0\,Y_s\,\d s\]
    be an integrated OU process. Then,
    \[\Var[Y^*_t]=\frac{4}{a}t-\frac{4}{a^2}(1-e^{-at}).\]
\end{lemma}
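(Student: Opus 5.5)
The plan is to compute $\Var[Y^*_t]$ directly from the stationary covariance function of $Y$, via the same double-integral identity used in the proof of Lemma~\ref{lemma-MSE}. Two implicit conventions must be made explicit first. The lemma is applied with $a=\sigma^2/4$ (so $\sigma^2=4a$, as in \eqref{eq-Y-FECMC}), and $Y$ is started in stationarity. Under these conventions the stationary law of $Y$ is $N(0,\sigma^2/(2a))=N(0,2)$. This is consistent with Assumption~\ref{assumption-FECMC}: $Y^d_0=(\abs{X^d_0}^2-d)/\sqrt d$ has mean $0$ and variance exactly $2$. Without $\sigma^2=4a$ the stated formula would carry an extra factor $\sigma^2/(4a)$, so I will state this reduction at the outset.

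\textbf{Step 1 (covariance).} Write the variation-of-constants solution
\[Y_{s+u}=e^{-au}Y_s+\sigma\int_s^{s+u}e^{-a(s+u-r)}\,\d B_r.\]
The stochastic integral has mean zero and is independent of $Y_s$. Hence, in stationarity,
\[C(u)\coloneq\Cov[Y_s,Y_{s+u}]=\frac{\sigma^2}{2a}e^{-au}=2e^{-au},\qquad u\ge0.\]

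\textbf{Step 2 (Fubini and symmetry).} Since $\E\int_0^t\int_0^t\abs{Y_sY_r}\,\d s\,\d r<\infty$, Fubini's theorem gives
\[\Var[Y^*_t]=\int_0^t\int_0^tC(\abs{s-r})\,\d s\,\d r=2\int_0^t(t-u)C(u)\,\d u.\]
This is the same reduction already used for $R^F$ in the proof of Lemma~\ref{lemma-MSE}.

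\textbf{Step 3 (elementary integral).} Substituting $C(u)=2e^{-au}$, I will evaluate $\int_0^t(t-u)e^{-au}\,\d u=t/a-(1-e^{-at})/a^2$, either by integration by parts or by differentiating in $t$. This gives
\[\Var[Y^*_t]=4\paren{\frac{t}{a}-\frac{1-e^{-at}}{a^2}},\]
which is the claim.

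There is no real obstacle; the only delicate point is the normalization. I would flag that the formula presupposes stationarity together with $\sigma^2=4a$ (stationary variance $2$). The application in Lemma~\ref{lemma-MSE} satisfies both: $a=\sigma_F^2/4$, and stationarity holds by Assumption~\ref{assumption}. With these values the formula becomes $16t/\sigma_F^2-64(1-e^{-\sigma_F^2t/4})/\sigma_F^4$, as used there.
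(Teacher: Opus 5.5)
Your proposal is correct and matches the paper's proof: Fubini's theorem reduces $\Var[Y^*_t]$ to the double integral of the stationary covariance $\Cov[Y_s,Y_u]=2e^{-a\abs{u-s}}$, and the resulting integral is evaluated just as in your Step~3. Your remark that this covariance, and hence the stated formula, requires a stationary start with $\sigma^2=4a$ is accurate. The paper uses this assumption without stating it; for general $\sigma$ the right-hand side would carry an extra factor $\sigma^2/(4a)$.
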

\begin{proof}
    We start by applying Fubini's theorem to obtain
    \[\Var[Y_t^*]=\E\Square{\paren{\int^t_0Y_s\,\d s}^2}=\int_0^t\int^t_0\Cov[Y_s,Y_u]\,\d s\d u.\]
    Substituting the stationary OU covariance $\Cov[Y_s,Y_u]=2e^{-a\abs{u-s}}$ \citep[see, e.g.,][p.37]{Revuz-Yor1999}, we have
    \begin{align*}
        \int^t_0\int^t_0\Cov[Y_s,Y_u]\,\d s\d u&=2\int_0^t\int^t_01_{\Brace{s<u}}2e^{-a(u-s)}\,\d s\d u\\
        &=4\int^t_0\frac{1-e^{-au}}{a}\,\d u=\frac{4}{a}t+\frac{4}{a^2}(e^{-at}-1).
    \end{align*}
    The result also follows from an integral representation given in \citet[2.17]{Barndorff-Nielsen1997}.
\end{proof}

\begin{lemma}[Exponential Decay of $R^F$'s Covariance]\label{lemma-exponential-decay-of-covariance}
    There exist constants $C_1,C_2>0$ such that
    \[\abs{\Cov[R^F_0,R^F_t]}\le C_1e^{-C_2t},\qquad t\ge0.\]
\end{lemma}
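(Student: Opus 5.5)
The plan is to derive the covariance decay from the exponential ergodicity of $R^F$ (Proposition~\ref{prop-exponential-ergodicity}) together with stationarity under $\gamma$. Since $R^F_0\sim\gamma$ has mean zero,
\[\Cov[R^F_0,R^F_t]=\E\Square{R^F_0\,\paren{P^t\id(R^F_0)-\gamma(\id)}},\]
where $\gamma(\id)=0$. The task therefore reduces to a pointwise bound on $\abs{P^t\id(x)-\gamma(\id)}$ that is exponentially small in $t$ and integrable against $\abs{x}\,\gamma(\d x)$.

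The total variation bound in Proposition~\ref{prop-exponential-ergodicity} cannot be applied directly, because $\id$ is unbounded. Instead I will use the weighted-norm version of Harris' theorem, as in \citet[Theorem~4.1]{Hairer2021} or \citet{Kulik2018}. Its proof already established the Lyapunov condition $L^FV\le -V+C$ with
\[V(x)=1+x \text{ on } x\ge0,\qquad V(x)=e^{-x} \text{ on } x<0,\]
together with the local Dobrushin condition on sublevel sets. This yields the stronger conclusion
\[\sup_{\abs{\varphi}\le V}\abs{P^t\varphi(x)-\gamma(\varphi)}\le c_1e^{-c_2t}V(x).\]
Since $\abs{x}\le V(x)$ for every $x$ (on $x<0$ we have $\abs{x}\le e^{-x}$, and on $x\ge0$ we have $x\le 1+x$), the choice $\varphi=\id$ gives
\[\abs{P^t\id(x)}\le c_1e^{-c_2t}V(x).\]

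It then follows that
\[\abs{\Cov[R^F_0,R^F_t]}\le c_1e^{-c_2t}\,\E_\gamma[\abs{x}V(x)].\]
The right-hand side is finite, since $\abs{x}e^{\abs{x}}$ is integrable against the Gaussian density. This gives the claim with $C_1=c_1\E_\gamma[\abs{x}V(x)]$ and $C_2=c_2$.

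The main obstacle is justifying the weighted (V-norm) form of Harris' theorem rather than only the total variation statement. This requires checking three things. First, $V$ lies in the domain of the extended generator; the kink at $0$ is harmless because $V'$ jumps only between $-1$ and $1$ and the flow has unit speed, so the Dynkin formula still applies. Second, sublevel sets of $V$ are compact, which is clear. Third, a minorization holds on them for some $P^h$. For the minorization I would use the explicit dynamics: from any $x\in[-a,a]$, with positive probability the process drifts up and jumps at some time, landing at $-\tau$ with $\tau\sim\chi(2)$, which has a density. This produces a common absolutely continuous component of $P^h(x,\cdot)$ uniformly over $x$ in compacts.

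If only the total variation bound were available, I would fall back on truncation. Split $\id=\id\,1_{\{\abs{x}\le K\}}+\id\,1_{\{\abs{x}>K\}}$, bound the bounded part using total variation with a factor $K$, control the tail by Gaussian tails of the stationary law together with Cauchy--Schwarz, and choose $K$ growing linearly in $t$. This still gives exponential decay, with a smaller rate.
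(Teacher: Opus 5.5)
Your argument is correct, but it takes a different route from the paper. The paper uses Proposition~\ref{prop-exponential-ergodicity} only through its stated total-variation conclusion. Integrating that bound against $\gamma$ (using $V\in L^1(\gamma)$) gives exponentially decaying $\alpha$-mixing coefficients between $R^F_0$ and $R^F_t$. Davydov's covariance inequality then yields $\abs{\Cov[R^F_0,R^F_t]}\le C\sqrt{\alpha_t}\,\|R^F_0\|_4^2$, which needs nothing about $\gamma$ beyond a finite fourth moment.

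You instead upgrade to the $V$-weighted form of Harris' theorem. This means going back into the hypotheses rather than citing the proposition's conclusion: the drift condition, $V$ lying in the domain of the extended generator, and a minorization on sublevel sets, which you justify more explicitly than the paper does. In exchange you get a direct pointwise bound on $P^t\id$, you keep the rate $c_2$ instead of halving it through $\sqrt{\alpha_t}$, and you avoid mixing-coefficient machinery altogether. Your truncation fallback is essentially a hands-on proof of the Davydov-type bound, so it is the closest to what the paper actually does.

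One caution when you reuse the drift condition from the paper's proof. For $x>0$ the jump term is $x(\E[V(-\tau)]-V(x))=x(\E[e^{\tau}]-1-x)$, because $V(-\tau)=e^{\tau}$. The paper's displayed computation writes $x(\E[-\tau]-x)$ instead. The bound $L^FV\le -V+C$ still holds, with a larger $C$, because $\E[e^{\tau}]<\infty$ and the $-x^2$ term dominates. So your argument goes through unchanged.
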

\begin{proof}
    From the exponential ergodicity of $R^F$ (Proposition~\ref{prop-exponential-ergodicity}), we have exponential decay of the $\alpha$-mixing coefficient of \cite{Rosenblatt1956} for any skeleton $(R^F_{hn})_{n=1}^\infty$ for $h>0$; see, for example, \citet[Proposition~5.1.1]{Kulik2018}.
    By applying Davydov's inequality \citep{Davydov1968,Rio1993}, we obtain, for every $h>0$,
    \[\abs{\Cov[R^F_0,R^F_{hn}]}\le C\sqrt{\alpha_{hn}}\|R^F_0\|_4^2,\qquad n=1,2,\cdots\]
    where $C>0$ is a constant and $\alpha_{hn}$ is the $\alpha$-mixing coefficient of $R^F_0$ and $R^F_{hn}$.
    Combined together, we obtain the stated result.
\end{proof}

\begin{proof}[Proof of Proposition~\ref{prop-asymptotic-MSE}]
    Given that $\hat{h}^d_{dT}$ can be decomposed into
    \begin{align*}
        \hat{h}^d_{dT}=\frac{1}{T}\int^T_0\frac{Y_t^F}{\sqrt{2}}\,\dt+\frac{1}{T}\int^T_0\frac{Y^d_t-Y^F_t}{\sqrt{2}}\,\dt,
    \end{align*}
    the result (Lemma~\ref{lemma-MSE}) derived under Assumption~\ref{assumption} carries over.
    Same holds for $\hat{g}^d_T$.
\end{proof}

\end{appendix}

\begin{acks}[Acknowledgments]
The authors would like to thank the anonymous referees, an Associate Editor and the Editor for their constructive comments that improved the quality of this paper.
\end{acks}
\begin{funding}
The first author was supported by JST BOOST, Japan Grant Number JPMJBS2412. The second author was supported by JST CREST, Japan Grant Number JPMJCR2115.
\end{funding}

\begin{supplement}
\stitle{Reproducible Julia Code}
\sdescription{Julia scripts to reproduce all figures and numerical experiments.
The same materials are mirrored at \url{https://github.com/162348/paper_HighDimFECMC}.}
\end{supplement}


\bibliographystyle{imsart-nameyear} 
\bibliography{TheBib}       


\end{document}